\newcommand{\revOne}[1]{{\color{black}#1}}
\newcommand{\revTwo}[1]{{\color{black}#1}}
\newtheorem{definition}{Definition}
\newtheorem{remark}{Remark}
\newtheorem{proposition}{Proposition}
\newtheorem{corollary}{Corollary}
\newtheorem{assumption}{Assumption}
\renewcommand\d{\ensuremath{\mathrm{d}}}
\newcommand{\bbR}{\mathbb{R}}
\DeclareMathOperator{\tr}{tr}
\newcommand*{\dual}[1]{\ensuremath{\widehat{#1}}}
\newcommand{\inpr}[3][]{\ensuremath{( #2, \, #3 )_{#1}}}
\newcommand{\dualpr}[3][]{\ensuremath{\langle #2 \, \vert #3 \rangle_{#1}}}
\DeclareMathOperator*{\grad}{grad}
\renewcommand{\div}{\operatorname{div}}
\DeclareMathOperator*{\curl}{curl}
\newcommand{\fenics}{\textsc{FEniCS}\xspace}
\newcommand{\firedrake}{\textsc{Firedrake}\xspace}
\journal{arXiv}
\begin{document}

\begin{frontmatter}	
		
		\title{Dual field structure-preserving discretization of port-Hamiltonian systems using finite element exterior calculus}	
		\author[UT]{Andrea Brugnoli\corref{cor1}}
		\ead{a.brugnoli@utwente.nl}
		
		\cortext[cor1]{Corresponding author}
	    \author[UT]{Ramy Rashad}
	    \ead{r.a.m.rashadhashem@utwente.nl}
        \author[UT]{Stefano Stramigioli}
        \ead{s.stramigioli@utwente.nl}

	    \address[UT]{Robotics and Mechatronics Department, University of Twente, The Netherlands}

		\begin{abstract}
	
	In this paper we propose a novel approach to discretize linear port-Hamiltonian systems while preserving the underlying structure. We present a finite element exterior calculus formulation that is able to mimetically represent conservation laws and cope with mixed open boundary conditions using a single computational mesh.
	The possibility of including open boundary conditions allows for modular composition of complex multi-physical systems whereas the exterior calculus formulation provides a coordinate-free treatment.
	Our approach relies on a dual-field representation of the physical system that is redundant at the continuous level but eliminates the need of mimicking the Hodge star operator at the discrete level.
	By considering the Stokes-Dirac structure representing the system together with its adjoint, which  embeds the metric information directly in the codifferential, the need for an explicit discrete Hodge star is avoided altogether.
	By imposing the boundary conditions in a strong manner, the power balance characterizing the Stokes-Dirac structure is then retrieved at the discrete level via symplectic Runge-Kutta integrators based on Gauss-Legendre collocation points. Numerical experiments validate the convergence of the method and the conservation properties in terms of energy balance both  for the wave and Maxwell equations in a three dimensional domain. For the latter example, the magnetic and electric fields preserve their divergence free nature at the discrete level.

		\end{abstract}
		
		\begin{keyword}
			 Port-Hamiltonian systems \sep Structure preserving discretization \sep Finite element exterior calculus \sep de Rham complex \sep Dual field representation 
		\end{keyword}

	\end{frontmatter}
	
\section{Introduction}

The field of computational engineering seeks to advance the reliability and accuracy of simulations by means of faster and/or higher fidelity algorithms. Complex engineering systems arise from the interconnection of many simple components. For this reason, it is important that computational models be also constructed via a modular approach, where each subsystem is modelled separately and then assembled to the rest (as in dynamic substructuring). Furthermore, even more important is the fact that such models can be reusable and are to a big extent independent of the super-model they will be part of. This idea is at the core of the port-Hamiltonian (pH) framework, since it describes the interaction between systems and the surrounding environment by means of ports (cf. \cite{maschke1993pH} for the first paper discussing pH finite dimensional systems and their connection with bond-graph modelling). In the infinite dimensional case the mathematical literature has been mostly focused on Hamiltonian systems  with no interaction with the environment (see \cite{olver1986applications} for a reference on the subject). However in the seminal work \cite{vanderSchaft2002}, distributed pH systems were first introduced to model physical systems of conservation laws (which happen to be described by hyperbolic PDEs due to the physical finite propagation time). In order to account for the non-trivial boundary interaction, the authors introduced the concept of Stokes-Dirac structure. This geometrical structure represents an extension of Dirac manifolds \cite{courant1990} to the infinite dimensional case. The Stokes theorem allows defining appropriate boundary variables to account for the power exchange through the boundary of the spatial domain. This geometrical structure is acausal, in the sense that it ignores the actual boundary conditions of the problem at hand, and simply captures all admissible boundary flows. \\

Distributed pH systems bring together different areas of mathematics, physics and engineering. They were first formulated using the language of differential geometry but their mathematical as well as system theoretic properties have also being studied in a functional analytic setting (cf.  \cite{legorrec2005,villegas2007port,skrepek2021,birgit2021riesz} for works on mathematical wellposedness and \cite{ramirez2014exponential,augner2014stability} for studies on stability and stabilization). Since they are closed under interconnection (this feature stems from the properties of the Dirac structure \cite{cervera2007int}), pH systems have the potential to tackle complex multiphysical engineering applications.  So far they were employed to model fluid-structure coupled phenomena \cite{cardoso2017}, reactive flows \cite{altmann2017reactive},  Euler and Navier-Stokes equations \cite{rashad2021part1,rashad2021part2,califano2021ns}, thin mechanical and thermomecanical structures \cite{brugnoli2020,brugnoli2021thermo}. The interested reader may consult \cite{rashad2020review} for a comprehensive review on distributed pH systems. \\

To exploit the full potential of this novel modelling framework, discretization schemes must retain the many properties of pH systems. In particular, numerical algorithms should capture the underlying conservation laws and provide a systematic way to handle boundary conditions, so that the resulting discrete pH system defines a (modular) Dirac structure. The first contribution in this sense dates back to \cite{golo2004hamiltonian}. Therein, the Stokes-Dirac structure is discrete in strong form exploiting mixed finite element differential forms and compatibility conditions need to be fulfilled\footnote{These conditions turn out to be very particular cases of the bounded commuting projectors of the Finite Element Exterior calculus framework \cite{arnold2006acta}.}.
A reduction method based on symplectic Gauss-Legendre collocation points is detailed in \cite{moulla2012pseudo}. This methodology suffers a major drawback, as it cannot handle higher dimensional spatial domains. A staggered finite difference strategy is proposed in \cite{trenchant2018} but the scheme is specialized for the wave equation only. The discrete exterior calculus framework \cite{hirani2003discrete}, based on algebraic topology, can also be used to formulate pH systems in a purely discrete manner, as shown in \cite{seslija2014simplicial}. However, this framework does not \revTwo{explicitly} rely on interpolating basis function but only on  mimetic discrete operators that reproduce the behaviour of their continuous counterparts. This makes necessary the employment of dual topological meshes, based on the Delaunay-Voronoi duality, to construct an isomorphic discrete Hodge star. This limits the applicability of the method, as dual meshes require ghost points that do not lie inside the physical domain, thus making it difficult to interconnect systems along a physical boundary. In order to avoid the need for dual meshes, a Galerkin formulation based on Whitney forms is detailed in \cite{kotyczka2018weak}. In order to construct a minimal bond space\footnote{\revTwo{The concept of bond space is fundamental in bond graph modelling. See \cite{vanderschaft2014port} for the relation between bond graph modelling and port-Hamiltonian systems}}, projector matrices parametrized by tunable parameters need to be introduced. The best choice of these parameters depends strongly on the application at hand and this complicates the employment of the method. When only one computational mesh is used, it is not possible to construct a discrete isomorphic Hodge operator \cite{hiptmair2001,bochev2006}.
\revTwo{Mixed finite elements do not rely on dual meshes to construct an isomorphic discrete Hodge, but rather on a weak formulation of the codifferential operator via the integration by parts formula. This was formalized by the mimetic discretization community \cite{bossavit2000,bochev2006,arnold2006acta} and, in the context of port-Hamiltonian systems, by the Partitioned Finite Element method \cite{cardoso2020pfem}. In particular, in \cite{haine2020numerical} convergence of mixed finite element discretization is guaranteed for a large class of finite elements, and not only restricted to the families that satisfy the de Rham cohomology}. \\

From the past literature, it may look like new tools and implementations are needed to be able to capture the particular features of pH systems. In this paper we argue the opposite. The pH modelling paradigm represent a unifying framework to describe physical problems. As such, it should be discretized via a unifying discretization method: the Finite Element Exterior calculus theory, \revTwo{initiated by the Japanese papers of Alain Bossavit and fully developed by Arnold,  Falk and Whinter}  \cite{arnold2006acta}, provides all the tools needed to accomplish this task. Using finite element differential forms, a discrete counterpart of the Stokes-Dirac structure can be easily obtained. However, the discretization of constitutive equations remains problematic as those require a discrete Hodge star. Recently, a dual field formulation has been introduced to discretize the Navier-Stokes equations \cite{zhang2021mass}. This methodology seeks variables in dual polynomial finite element spaces via a mixed finite element formulation and is capable of conserving mass, helicity and energy. A staggered time discretization, in which the vorticity information is exchanged between the two systems, allows treating the non linear rotational term in a linear fashion and makes this scheme computationally competitive with state of the art methods. 
This method has a clear connection with Hamiltonian dynamical systems, as discussed by the authors in the introduction of the paper. Furthermore, it relies on the Finite Element Exterior calculus framework as the finite elements used therein (the so-called mimetic quadrilateral/hexahedral finite elements \cite{palha2014mimetic}) form a discrete de Rham complex. \\

In this contribution, we show how the dual field formulation can be used for the systematic discretization of linear pH systems (but the framework is extendable to the state modulated case where one has a Poisson manifold, as already done in \cite{zhang2021mass}). To show the generality of the approach, we detail the calculations in exterior calculus. The dual field formulation is based on the construction of an adjoint Stokes-Dirac structure, where the flows and efforts are the Hodge dual of the initial Stokes-Dirac structure. The introduction of an adjoint system leads to a redundant representation of the dynamics, as the state variables are doubled. Nevertheless, this redundancy removes the need for a discrete Hodge star, as one can use dual variables to expresses the energy as a duality product of forms, rather than the classical inner product. The Stokes-Dirac structure can be immediately discretized in a mimetic fashion, whereas the adjoint Stokes-Dirac structure requires the integration by parts formula to obtain a discrete representation of the codifferential. The constitutive equations are embedded in the dynamics by expressing the state variables in terms of the effort ones. In other words, by using a redundant dual system, the state variables will be the original ones, plus the Hodge star of the former. In this way, it is possible to calculate the needed efforts corresponding to the differential of the energy function, without any need of the Hodge operation. The boundary conditions are here imposed strongly. This choice leads to two uncoupled mixed finite element discretization of the dynamical pH systems, each containing two unknowns (as in the Partitioned Finite Element method).  In this way, after these two systems are integrated on the same time grid via symplectic collocation Runge-Kutta methods (for simplicity the implicit midpoint scheme will be used here), an exact discrete power balance is retrieved. This implies that the discretization scheme gives rise to a Dirac structure and can be used for interconnecting different physical systems in a structured fashion. We detail the discrete operations (like the inner and duality products, the exterior derivative and the trace) using the trimmed polynomial spaces. The lowest order case coincide with the Whitney forms \cite{whitney1957}. The Whitney forms are particularly meaningful, since they make explicit a number of concepts  like metrical versus topological properties and operations. Numerical tests, assessing the conservation properties of the scheme and the convergence rate of the different variables, are performed on the wave and Maxwell equations in a three dimensional Euclidean domain\footnote{The implementation of the finite element method on a triangulated polyhedral manifold is immediate when the manifold is embedded in Euclidean space \cite{arnold2018finite}.}. \revTwo{The numerical tests show that each of two mixed discretizations converge to the exact solution under mesh refinement, without exhibiting synchronisation problems.} The well established open-source library \firedrake \cite{rathgeber2017firedrake} is employed for the implementation (but one can also use the \fenics \cite{logg2012} library). \\

The paper is organised as follows: in Sec. \ref{sec:prel} some preliminaries on needed concepts and notation will be presented. In Sec. \ref{sec:pradj_SDS} we introduce the concept and representation of the \revTwo{classical} and adjoint Stokes-Dirac structure and the associated pH systems which is instrumental in the presented solution. The weak formulation, upon which the discretization scheme is based, is presented in Sec. \ref{sec:weak_df}. The spatial discretization is illustrated in Sec. \ref{sec:space_discr} by means of the trimmed polynomial spaces. We detail the discrete version of the dual field Stokes-Dirac structure as well as the associated dynamical system. In Sec. \ref{sec:time_discr}, the time discretization is explained using the implicit midpoint method. Sec. \ref{sec:num_exp} collects the numerical tests and shows the numerical \revTwo{properties} of the proposed methodology. 

\section{Preliminaries}\label{sec:prel}
In this section we will introduce some preliminary concepts and their notation which are needed for the treatment ahead. The presented material is inspired by \cite{bochev2006,arnold2018finite}.

\subsection{Smooth differential forms}

Let $M\subset \bbR^n$ be a differentiable Riemannian manifold with boundary $\partial M$ and metric $g$. The space of smooth differential forms on $M$ (i.e. the space of smooth sections of the $k$th exterior power of the cotangent bundle $T^* M$) is denoted by $\Omega^k(M)$. The wedge product \revTwo{$\wedge : \Omega^k(M) \times \Omega^l(M) \rightarrow \Omega^{k+l}(M), \; k,l \ge 0, \; k+l \le n$} is the skew-symmetric exterior product of differential forms
\begin{equation}\label{eq:skew_wedge}
    \alpha^k \wedge \beta^l = (-1)^{kl} \beta^l \wedge \alpha^k,
    \qquad \alpha^k \in \Omega^k(M), \; \beta^l \in \Omega^l(M).
\end{equation}

One fundamental operator acting on smooth differential forms is the exterior derivative $\d:\Omega^k(M) \rightarrow \Omega^{k+1}(M), \; k\le n-1$ that satisfies the following axiomatic properties
\begin{itemize}
    \item $\d f$ for $f \in \Omega^0(M)$ is the differential of $f$;
    \item $\d\d f = 0$ for $f \in \Omega^0(M)$;
    \item Product (or Leibniz) rule
    \begin{equation}\label{eq:leibniz}
        \d (\alpha^k \wedge \beta^l) = \d\alpha^k \wedge \beta^l + (-1)^k \alpha^k \wedge \d\beta^l, \qquad \alpha^k \in \Omega^k(M), \quad \beta^l \in \Omega^l(M), \quad k+l < n.
    \end{equation}
\end{itemize}
Occasionally for the sake of additional clarity, the exterior derivative acting on a $k$-form will be denoted by $\d^k$. \revTwo{Consider a manifold $M$ of dimension $n$, a manifold $S$ of dimension $m$, and a smooth
mapping between them, $\Phi : S \rightarrow M$. The pullback operator, $\Phi^*$, is a mapping $\Phi^* : \Omega^k(M) \rightarrow \Omega^k(S)$, that
maps $k$-forms in $M$ to $k$-forms in $S$, with $k\le m$, and $k \le n$, naturally.} An important case of pull back is the trace operator.
\begin{definition}[Trace operator]
The trace operator is defined to be the pull back of the \revTwo{inclusion} map $\iota: \partial M \rightarrow M$
\begin{equation}\label{eq:trace}
    \tr \omega^k := \iota^*(\omega^k), \qquad \omega^k \in \Omega^k(M), \quad k\le n-1.  
\end{equation}
\end{definition}
One fundamental result in exterior calculus is the Stokes theorem, that relates derivation, integration and the trace operator
\begin{equation}\label{eq:Stokes}
    \int_M \d \omega^{n-1} = \int_{\partial M} \tr \omega^{n-1}, \qquad \omega^{n-1} \in \Omega^{n-1}(M),
\end{equation}
\revTwo{with $M$ an $n-$dimensional manifold.} Differential forms possess a natural duality product.
\begin{definition}[Duality product]
Given a smooth manifold $M$ of dimension $n$, the duality product is denoted by
\begin{equation}\label{eq:dual_pr}
    \dualpr[M]{\alpha^k}{\beta^{n-k}} := \int_M \alpha^k \wedge \beta^{n-k}, \qquad \alpha^{k} \in \Omega^{k}(M), \quad \beta^{n-k} \in \Omega^{n-k}(M), \quad k=0, \dots, n.
\end{equation}

The duality product is also defined on the boundary $\partial M$ \revTwo{(whose orientation is inherited from the one of the manifold)}
\begin{equation}
    \dualpr[\partial M]{\alpha^k}{\beta^{n-k-1}} := \int_{\partial M} \tr \alpha^k \wedge \tr \beta^{n-k-1}, \qquad \alpha^k \in \Omega^{k}(M), \quad  \beta^{n-k-1} \in \Omega^{n-k-1}(M), 
\end{equation}
\end{definition}
\revTwo{where $k=0, \dots, n-1$.}
Combing together the Leibniz rule and the Stokes theorem, one has the integration by parts formula 
\begin{equation}\label{eq:int_byparts_d}
    \dualpr[M]{\d\alpha}{\beta} + (-1)^k \dualpr[M]{\alpha}{\d\beta} = \dualpr[\partial M]{\alpha}{\beta}, \qquad \alpha \in \Omega^{k}(M), \quad \beta \in \Omega^{n-k-1}(M), \quad k=0, \dots, n-1.
\end{equation}

\subsection{$L^2$ theory of differential forms}
Given a coordinate chart $\xi_i:M \rightarrow \bbR \quad i=1, \dots, n$, we can represent locally a point $p$ in the manifold $M$ with the tuple $\xi(p) := (\xi_1(p), \dots, \xi_n(p))$.
Then, the local representation of a form $\alpha^k \in \Omega^k(M)$ reads
\begin{equation}\label{eq:local_form}
    \alpha^k(p) = \sum_I \alpha_I(\xi(p)) \d\xi^{i_1} \wedge \dots \wedge \d\xi^{i_k},
\end{equation}
where the multi-index $I= i_1, \dots, i_k, \; 1 \le i_1 \le \dots \le i_k \le n$ has been introduced. The space of $k$-forms can be equipped with a pointwise inner product, inherited form the metric structure of the Riemannian manifold (that establishes an inner product of vectors) and the duality between vectors and differential forms. 

\begin{definition}[Pointwise inner product of forms]\label{eq:local_inpr}
Let $\alpha^k, \beta^k \in \Omega^k(M)$ be two forms with local representation as in \eqref{eq:local_form}, the point-wise inner product is given by
\begin{equation}
    \inpr{\alpha^k}{\beta^k} = g^{i_1, j_1} \dots g^{i_k, j_k} \alpha_{i_1, \dots, i_k} \beta_{j_1, \dots, j_k},
\end{equation}
where $g^{k, l} = (\d\xi^k, \d\xi^l)$ are the components of the inverse metric tensor.
\end{definition}

Once an orientation is given to the Riemannian manifold, the Hodge star operator $\star$ can be properly defined. \revTwo{
The Hodge maps inner-oriented (or true) forms, measuring intensities, to outer-oriented (or pseudo) forms \cite{kreeft2011mimetic,frankel2011geometry}, measuring quantities, and vice versa. This distinction is of fundamental importance and allows defining quantities that are orientation independent (like mass, energy, etc.). In this paper outer-oriented forms are denoted by means of a hat, i.e. $\dual{\alpha}^k \in \dual{\Omega}^k(M)$ where $\dual{\Omega}^k(M)$ is the space of outer oriented (or pseudo) forms.
}
\begin{definition}[Hodge-$\star$ operator]
The Hodge-$\star$ operator, defined for an $n$-dimensional Riemannian manifold $M$, is the operator $\star : \Omega^k(M) \rightarrow \dual{\Omega}^{n-k}(M)$ such that
\begin{equation*}
    \alpha^k \wedge \star \beta^k = \inpr{\alpha^k}{\beta^k} \mathrm{vol}, \qquad \alpha^k, \beta^k \in \Omega^k(M)
\end{equation*}
where the inner product is defined in Def. \ref{eq:local_inpr}. The standard volume form in local coordinates is given by \cite[Page 362]{frankel2011geometry}
\begin{equation*}
    \mathrm{vol} = \sqrt{|\mathrm{det}(g_{ij})|} \epsilon_{1\dots n}\d\xi^1 \wedge \dots \wedge \d\xi^n,
\end{equation*}
where $g_{ij}$ are the components of the metric tensor in the chosen chart $\xi$ and $\epsilon_{1\dots n}$ is the Levi-Civita symbol. \revTwo{Notice the often forgotten importance of the Levi-Civita symbol, that indicates that the volume form is indeed a pseudo form $\mathrm{vol} = \star 1$, as the total volume of a portion of space cannot be negative.}
\end{definition}

We now introduce the $L^2$ inner product of forms.

\begin{definition}[$L^2$ inner product]
Given a smooth manifold $M$ of dimension $n$, the $L^2$ inner product is defined by
\begin{equation*}
    \inpr[M]{\alpha^k}{\beta^k} := \int_M \inpr{\alpha^k}{\beta^k} \mathrm{vol} = \int_M \alpha^k \wedge \star \beta^k, \qquad \alpha, \; \beta \in \Omega^{k}(M)
\end{equation*}
\end{definition}
As in vector calculus, the $L^2$ Hilbert space of differential forms is defined by completion. 
\begin{definition}[$L^2$ space of differential forms]
    The $L^2\Omega^k(M)$ space is the completion of the space of smooth forms $\Omega^k(M)$ in the norm induced by the $L^2$ inner product.
\end{definition}

\revTwo{The $L^2$ inner product is fundamental for the construction of weak formulations. Weak formulations allows enlarging the solution space from the space of smooth forms, to the less regular Sobolev spaces, that represent the domain of the exterior derivative.
}

\begin{definition}[Sobolev spaces of differential forms]
The space of square integrable forms whose derivative is square integrable is denoted by
\begin{equation*}
    H\Omega^k(M) := \{\omega^k \in L^2 \Omega^k(M) \vert \; \d{\omega^k} \in L^2 \Omega^{k+1}(M)\}, \qquad k=0, \dots, n-1.
\end{equation*}
For $k=n$, the space reduces to an $L^2$ space, $H\Omega^n(M) = L^2\Omega^n(M)$.
\end{definition}
These spaces, connected by the operator $\d$, form the de Rham domain complex, as illustrated in the following diagram.
\begin{figure}[h]
\centering
\begin{tikzcd}
H\Omega^0(M) \arrow[r, "\d"] & \dots \arrow[r, "\d"] & H\Omega^k(M) \arrow[r, "\d"] & \dots \arrow[r, "\d"] & H\Omega^{n}(M) 
\end{tikzcd} 
\caption{The domain de Rham complex.}
\label{fig:cd_deRham}
\end{figure}

\revTwo{The adjoint\footnote{In some mathematical textbooks, the term adjoint is used as synonymous for the dual operator. Here, the term adjoint always refers to the Hilbert adjoint based on the inner product structure.} operator to the exterior derivative with respect to the $L^2$ inner product is the codifferential.}

\begin{definition}[Codifferential]
    The co-differential map $\d^* : \Omega^k(M) \xrightarrow{} \Omega^{k-1}(M)$ is defined by
\begin{equation}\label{eq:codif}
        \d^* := (-1)^{nk + n + 1} {\star} \d {\star}.
\end{equation}
The definition is such that the codifferential corresponds to the formal  adjoint of the exterior derivative\footnote{Please note the difference in notation between the inner product $\inpr{\bullet}{\bullet}$ and the duality product $\dualpr{\bullet}{\bullet}$.}
\begin{equation}\label{eq:intbyparts_codif}
    \inpr[M]{\alpha^k}{\d^* \beta^{k+1}} = \inpr[M]{\mathrm{d} \alpha^{k}}{\beta^{k+1}} - \dualpr[\partial M]{\alpha^{k}}{\star \beta^{k+1}}, \qquad \alpha \in \Omega^k(M), \; \beta \in \Omega^{k+1}(M).
\end{equation}
\end{definition}

\revTwo{Sobolev spaces for the codifferential are then defined as the space of forms whose codifferential is square integrable. In this work the codifferential will be always treated in a weak manner, using the integration by parts in Eq. \eqref{eq:intbyparts_codif}. This choice leads to a weak formulation in which all variable live in the de-Rham complex of Fig. \ref{fig:cd_deRham}}.

\section{The \revTwo{canonical} and adjoint Stokes-Dirac structure}\label{sec:pradj_SDS}
In this section the strong formulation of port-Hamiltonian systems and the important role of the underlying geometrical structure are recalled. The corresponding adjoint system is constructed. \revTwo{For sake of concreteness, first the introductory example of the scalar wave equation with Dirichlet boundary control is presented. This problem is a simple example of port-Hamiltonian system.
\subsection{An introductory example: the scalar wave equation}\label{sec:ex_wave}
The propagation of acoustic waves in $M\subset \bbR^3$ is described by the following hyperbolic partial differential equation, that determines the time-dependent (pseudo) $3$-form field $\dual{\eta}^3(t): [0, T_{\mathrm{end}}] \rightarrow \dual{\Omega}^3(M)$
\begin{equation}\label{eq:irr_wave}
    \partial_{tt}^2 \dual{\eta}^3 + \d \d^* \dual{\eta}^3 = 0,
\end{equation}
together with time-varying Dirichlet boundary condition
\begin{equation}\label{eq:dirbc_wave}
    \tr \star \dual{\eta}^3|_{\partial M} = g^0.
\end{equation}
To highlight the Hamiltonian structure of the wave equation, consider the variables
\begin{equation}
    \dual{v}^3 := \partial_t \dual{\eta}^3, \qquad \sigma^1 = -\d \star \dual{\eta}^3.
\end{equation}
Equation \eqref{eq:irr_wave}, together with the boundary condition \eqref{eq:dirbc_wave}, can now be recast into a first order system 
\begin{equation}\label{eq:mix_wave}
    \begin{pmatrix}
        \partial_t \dual{v}^3 \\
        \partial_t \sigma^1
    \end{pmatrix} = -\underbrace{\begin{bmatrix}
        0 & \d \\
        \d & 0 \\
    \end{bmatrix}}_{J}
    \begin{pmatrix}
        \star \dual{v}^3 \\
        \star \sigma^1
    \end{pmatrix}, \qquad \tr \star \dual{v}^3|_{\partial M} = \partial_t g^0:= u^0.
\end{equation}
Here $u^0$ corresponds to a control input applied to the boundary. Indeed port-Hamiltonian systems are boundary controlled systems and the boundary conditions coincide with inputs that describe interactions with the external environment. \\

The choice of variables $\dual{v}^3, \; \sigma^1$ leads to a clear separation between topological and metrical properties. The topological properties are incorporated into the (skew-dual) operator $J$, whereas the metric properties are evident in the appearance of the Hodge in the right hand side of Eq. \eqref{eq:irr_wave}. The topological properties relate to conservation laws, where the metrical properties are related to the constitutive laws of the physical system under consideration.

The power that flows through the domain boundary is obtained by taking the duality product with $\star \dual{v}^3$ and $\star \sigma^1$ and using the Leibniz rule and  Stokes theorem
\begin{equation}
	\label{eq:wave_power_balance}
\begin{aligned}
    \dualpr[M]{\star \dual{v}^3}{\partial_t \dual{v}^3} + \dualpr[M]{\star \sigma^1}{\partial_t \sigma^1} &= \dualpr[M]{\star \dual{v}^3}{-\d \star \sigma^1} + \dualpr[M]{\star \dual{v}^3}{-\d \star v^3}, \\
    &= \dualpr[\partial M]{-\tr \star \sigma^1}{\tr \star \dual{v}^3}, \\
    &= \dualpr[\partial M]{\dual{y}^2}{u^0}
\end{aligned}
\end{equation}
where variable $\dual{y}^2$ corresponds to the power conjugated output to the input, i.e. the power collocated output
\begin{equation}
    \dual{y}^2 := -\tr \star \sigma^1.
\end{equation}
It can be noticed that this variable corresponds to the Neumann boundary condition \cite{arnold2018finite}. Furthermore, the output is dual to the input on the boundary $\partial M$.
Notice that each duality product in (\ref{eq:wave_power_balance}) involves pairing an inner-oriented form with an outer-oriented one.\\

System \eqref{eq:mix_wave} is an example of a port-Hamiltonian system. The underlying geometrical structure is the Stokes-Dirac structure, an infinite dimensional generalization of Dirac manifolds introduced by Courant \cite{courant1990}. In what follows we introduce the general port-Hamiltonian formulation for a system of two conservation laws.
}

\subsection{Linear Stokes-Dirac structures}
Given a smooth manifold $M$ of dimension $n$, the \revOne{flow} variables $\dual{f}^p_1 \in \dual{\Omega}^p(M), \; {f}^q_2 \in \Omega^q(M)$ and the effort variables ${e}^{n-p}_1 \in \Omega^{n-p}(M), \; \dual{e}^{n-q}_2 \in \dual{\Omega}^{n-q}(M)$, with\footnote{This relation in dimensions is due to the fact that the fields are dual and will become clear from the construction hereafter.} $p+q=n+1$, consider the Stokes-Dirac structure

\begin{equation}\label{eq:StokesDirac}
    \begin{pmatrix}
        \dual{f}^p_1 \\
        {f}^q_2
    \end{pmatrix} = 
    \underbrace{\begin{bmatrix}
    0 & (-1)^r \d \\
    \d & 0 \\
    \end{bmatrix}}_{J}
    \begin{pmatrix}
        {e}^{n-p}_1 \\
        \dual{e}^{n-q}_2
    \end{pmatrix}, \qquad 
    \begin{pmatrix}
        {f}_\partial^{n-p} \\
        \dual{e}_\partial^{n-q}
    \end{pmatrix} = 
    \begin{bmatrix}
    \tr & 0 \\
    0 &  (-1)^p\tr
    \end{bmatrix}
    \begin{pmatrix}
        {e}^{n-p}_1 \\
        \dual{e}^{n-q}_2
    \end{pmatrix},
\end{equation}
where $r = pq +1$ for mathematical and physical reasons \cite{vanderSchaft2002}. The operator $J$, representing what is called in physical system theory the junction structure, is called the interconnection operator and is a formally \revTwo{skew-dual} operator. This property is at the core of the balance equation
\begin{equation}\label{eq:bal_eq}
    \dualpr[M]{e^{n-p}_1}{\dual{f}^p_1} + \dualpr[M]{\dual{e}^{n-q}_2}{f^q_2} + \dualpr[\partial M]{\dual{e}_\partial^{n-q}}{{f}_\partial^{n-p}} = 0,
\end{equation}
where $\partial M$ denotes the boundary of manifold $M$. This balance equation arises from the application of the Leibniz rule and the Stokes theorem and states the overall conservation of energy.
This is the  embodiment of Tellegen's theorem generalised to the  distributed parameters systems case \cite{iftime2014kernel}. 
\revTwo{\begin{remark}
The Stokes-Dirac structure is a geometrical concept that does not involve time. The exterior derivative is associated to the spatial manifold only. A flow variable may be associated to a dynamics (i.e. $f^k = -\partial_t \alpha^k$ as it will be shown in the next section) leading to a conservation law, or to a resistive relation that introduces dissipation. The hyperbolic case arises when both flows have an associated dynamics, whereas the parabolic case is obtained when one flow is associated to a dynamics and the other one is purely algebraic and associated to resistive relation. In this paper the focus will be on the hyperbolic case only.
\end{remark}
}

\subsection{Port-Hamiltonian systems}
Port-Hamiltonian systems encoding conservation laws are associated to the geometrical Stokes-Dirac structure. To establish this connection, consider the distributed state variables $\dual{\alpha}^p(t) : [0, T_{\text{end}}] \rightarrow \dual{\Omega}^p(M), \; \beta^{q}(t) : [0, T_{\text{end}}] \rightarrow \Omega^q(M)$ and the Hamiltonian functional 
\begin{equation}\label{eq:H}
H(\dual{\alpha}^p, \beta^{q}) = \int_M
\mathcal{H}(\dual{\alpha}^p, \beta^{q})
\end{equation}
with Hamiltonian density $n$-form $\mathcal{H}$. A fundamental notion is the variational derivative of the Hamiltonian functional \cite{olver1986applications, vanderSchaft2002}. 
\begin{definition}[Variational derivative]\label{def:var_der}
The variational derivatives of the Hamiltonian $\delta_{\dual{\alpha}} H^{n-p} \in \Omega^{n-p}(M), \; \delta_{\beta} H^{n-q} \in \dual{\Omega}^{n-q}(M)$ are defined implicitly by
\begin{equation*}
\begin{aligned}
    \left.\diff{}{\varepsilon}\right|_{\varepsilon = 0} H(\dual{\alpha}^p + \varepsilon \delta \dual{\alpha}^p, \beta^{q}) = \dualpr[M]{\delta_{\dual{\alpha}} H^{n-p}}{\delta \dual{\alpha}^p}, \\
    \left.\diff{}{\varepsilon}\right|_{\varepsilon = 0} H(\dual{\alpha}^p, \beta^{q} + \varepsilon \delta \beta^{q}) = \dualpr[M]{\delta_{\beta} H^{n-q}}{\delta \beta^{q}}.
\end{aligned}
\end{equation*}
\end{definition}

Considering then a trajectory $(\dual{\alpha}^p(t),\beta^{q}(t))$ parameterised by time in the manifold of state fields, the variational derivative is defined so that the rate of the Hamiltonian \eqref{eq:H} reads
\begin{equation}\label{eq:H_dot}
    \dot{H} = \dualpr[M]{\delta_{\dual{\alpha}} H^{n-p}}{\partial_t \dual{\alpha}^p} + \dualpr[M]{\delta_{\beta} H^{n-q}}{\partial_t \beta^{q}}.
\end{equation}

With an abuse of notation, we will write \revTwo{$\dual{\alpha}^p(\xi, t)= \dual{\alpha}^p(t), \; \beta^q(\xi, t) = \beta^q(t)$} to make explicit the value of the field at a certain point $\xi\in M$ rather than the field as a section. Consider then the following equations which can be seen to represent a system of two conservation laws with canonical inter-domain coupling \cite{vanderSchaft2002}
\begin{equation}\label{eq:pHsys}
    \begin{pmatrix}
        \partial_t \dual{\alpha}^p(\xi, t) \\
        \partial_t \beta^{q}(\xi, t) \\
    \end{pmatrix} = -
    \begin{bmatrix}
    0 & (-1)^r \d \\
    \d & 0 \\
    \end{bmatrix}
    \begin{pmatrix}
        \delta_{\dual{\alpha}} H^{n-p} \\
        \delta_{\beta} H^{n-q}
    \end{pmatrix}, 
\end{equation}
with initial conditions
\begin{equation}
    \begin{pmatrix}
        \dual{\alpha}^p(\xi, 0) \\
        \beta^{q}(\xi, 0)
    \end{pmatrix} = 
    \begin{pmatrix}
        \dual{\alpha}^p_0(\xi) \\
        \beta^{q}_0(\xi) \\
    \end{pmatrix}.
\end{equation}
\revTwo{
The flows and efforts of the associated Stokes-Dirac 
structure \eqref{eq:StokesDirac} are then defined by
\begin{equation}\label{eq:pHsys_flows_efforts}
\begin{pmatrix}
        \dual{f}^p_1 \\
        f^q_2
    \end{pmatrix} := -
    \begin{pmatrix}
        \partial_t \dual{\alpha}^p(\xi, t) \\
        \partial_t \beta^{q}(\xi, t) \\
    \end{pmatrix}, \qquad 
    \begin{pmatrix}
        e^{n-p}_1 \\
        \dual{e}^{n-q}_2
    \end{pmatrix} := 
    \begin{pmatrix}
        \delta_{\dual{\alpha}} H^{n-p} \\
        \delta_{\beta} H^{n-q}
    \end{pmatrix}.
\end{equation}
\paragraph{Boundary conditions}
Mixed boundary conditions will be considered in this work. We denote with $\Gamma_1$  and $\Gamma_2$ two open subsets of the boundary that   verify $\overline{\Gamma}_1 \cup \overline{\Gamma}_2 = \partial M$ and $\Gamma_1 \cap \Gamma_2=\emptyset$. Each boundary subpartition is associated with one boundary condition. In particular, the values of $\delta_{\dual{\alpha}} H^{n-p} = e^{n-p}_1$ (resp. $\delta_{\beta} H^{n-q}= \dual{e}^{n-q}_2$) are imposed on $\Gamma_1$ (resp. $\Gamma_2$). Since in this paper we consider boundary controlled systems, the boundary conditions are assigned by means of the inputs
 \begin{equation}\label{eq:u}
 \begin{aligned}
     \tr e^{n-p}_1 \vert_{\Gamma_1} &= u^{n-p}_1 \in \Omega^{n-p}(\Gamma_1), \\
    (-1)^p \tr \dual{e}^{n-q}_2 \vert_{\Gamma_2} &= \dual{u}^{n-q}_2 \in \dual{\Omega}^{n-q}(\Gamma_2).
 \end{aligned}
 \end{equation}
}
\revTwo{
The outputs are defined in such a way that they are power conjugated to the inputs
 \begin{equation}\label{eq:y}
 \begin{aligned}
      y^{n-p}_1 := \tr e^{n-p}_1 \vert_{\Gamma_2} &\in \Omega^{n-p}(\Gamma_2), \\
    \dual{y}^{n-q}_2 := (-1)^p \tr \dual{e}^{n-q}_2 \vert_{\Gamma_1} &\in \dual{\Omega}^{n-q}(\Gamma_1). 
 \end{aligned}
 \end{equation}
Given the balance equation \eqref{eq:bal_eq}, the rate of change of Hamiltonian (power flow) reads
\begin{equation}\label{eq:powbal}
\begin{aligned}
    \dot{H} &= \dualpr[\partial M]{\dual{e}_\partial^{n-q}}{f_\partial^{n-p}}, \\
    &= \dualpr[\Gamma_1]{\dual{y}^{n-q}_2}{u^{n-p}_1} + \dualpr[\Gamma_2]{\dual{u}^{n-q}_2}{y^{n-p}_1}
\end{aligned}
\end{equation}
where the last equality descends from the additive property of the integration and the definition of the boundary variables given in Eq. \eqref{eq:StokesDirac}.
Equation \eqref{eq:powbal} states that the change of energy within the spatial domain $M$ is equal to the supplied power through its boundary $\partial M$. Notice that since $p+q=n+1$, the boundary variables are dual forms on the boundary $(n-p) + (n-p) = 2n - (n+1) = n-1$.
}

\paragraph{Constitutive equations}
Since in this work the focus is on the case in which the closure equations are linear, we make the additional assumption of a quadratic Hamiltonian.
\begin{assumption}[Quadratic Hamiltonian]\label{ass:quad_Ham}
The Hamiltonian density in assumed to be quadratic in the state variables
\begin{equation}\label{eq:H_den}
\mathcal{H}(\dual{\alpha}^p, \beta^{q}) = 
    \frac{1}{2} \dual{A}^p \dual{\alpha}^p \wedge \star \dual{\alpha}^p + \frac{1}{2}  B^q \beta^{q} \wedge \star  \beta^{q},
\end{equation}
where $\dual{A}^p: L^2\dual{\Omega}^{p}(M) \rightarrow L^2\dual{\Omega}^{p}(M)$ and $B^q: L^2\Omega^{q}(M) \rightarrow L^2\Omega^{q}(M)$ are bounded from above and below, symmetric, positive definite operators with respect to the standard inner products in $L^2$. These tensors are related to physical properties of space like electric permittivity, mass density or Young modulus.
\end{assumption}
Because of this assumption, the variational derivatives of the Hamiltonian, computed according to Definition \ref{def:var_der}, can be seen to be
\begin{equation}
    \delta_{\dual{\alpha}} H^{n-p} = (-1)^{p(n-p)}\star \dual{A}^p\dual{\alpha}^p, \qquad
    \delta_{\beta} H^{n-q} = (-1)^{q(n-q)}\star B^q\beta^{q}.
\end{equation}

\subsection{Adjoint Stokes-Dirac structure \label{sec:asds}}
\revTwo{
The port-Hamiltonian system (\ref{eq:pHsys}) and its associated Stokes-Dirac structure (\ref{eq:StokesDirac}) were defined on the state space denoted by $X = \dual{\Omega}^p(M) \times \Omega^q(M)$. The key ingredient that will be used for our proposed dual-field discretization approach is the Adjoint Stokes-Dirac structure that we will introduce in this section. The adjoint structure is defined on the Hodge dual state space $\dual{X} = \Omega^{n-p}(M) \times \dual{\Omega}^{n-q}(M)$ related to $X$ by the diffeomorphism}
\begin{equation}\label{eq:Phi_Diffeo}
	\Phi:
	X \rightarrow \dual{X} ; \qquad
	\begin{pmatrix}
	\dual{\alpha}^p \\    
	\beta^{q}
	\end{pmatrix}
	\mapsto 
	\begin{pmatrix}
	{\alpha}^{n-p} \\    
	\dual{\beta}^{n-q}
	\end{pmatrix} :=
	\begin{pmatrix}
	\star^{-1}\dual{\alpha}^p \\    
	\star \beta^{q}
	\end{pmatrix}, \qquad \Phi = 
	\begin{bmatrix}
        \star^{-1} & 0 \\
        0 & \star 
    \end{bmatrix}.
\end{equation}
It is worth noting that the choice of $\Phi$ is not unique. While in this work we choose (\ref{eq:Phi_Diffeo}) for numerical reasons, in Sec. \ref{sec:additionalinsights} we will discuss an alternative \revTwo{in which the material tensors will be included in the Hodge.}
The pushforward associated to the diffeomorphism converts elements in the tangent space of the state space
\begin{equation}\label{eq:pushfor_phi}
\Phi_* : T X \cong {X} \rightarrow T \dual{X} \cong \dual{X}: \qquad  
	\begin{pmatrix}
	\dual{f}^p_1 \\
	f^q_2
	\end{pmatrix}
\mapsto  
    \begin{pmatrix}
    {f}^{n-p}_1\\
    \dual{f}^{n-q}_2
    \end{pmatrix} :=
    \begin{pmatrix}
    \star^{-1} \dual{f}^p_1\\
    \star f^q_2
    \end{pmatrix}, \qquad \Phi_* = \begin{bmatrix}
        \star^{-1} & 0 \\
        0 & \star 
    \end{bmatrix}.
\end{equation}
\begin{remark}
Since the space of differential forms is an infinite dimensional $\bbR$ vector space, the tangent space  is actually isomorphic with the space of forms: $TX = T\dual{\Omega}^p \times T\Omega^q \cong \dual{\Omega}^p \times \Omega^q$. 
\end{remark}
\revTwo{
\begin{proposition}\label{pr:pullback_phi}
The pullback $\Phi^*$ of the map $\Phi$ is given by:
\begin{equation}
\label{eq:pullback_phi}
\Phi^*: T^*_{\dual{x}} \dual{X} \cong \dual{X} \rightarrow T^*_x X \cong X: \qquad
\begin{pmatrix}
    \dual{e}_1^{p}\\
    {e}^{q}_2
\end{pmatrix} :=
 \begin{pmatrix}
     \star^{-1} {e}^{n-p}_1 \\
     \star \dual{e}^{n-q}_2
 \end{pmatrix}
 \mapsto 
 \begin{pmatrix}
     {e}_1^{n-p} \\
     \dual{e}_2^{n-q}
 \end{pmatrix},  \qquad 
 \Phi^* = \begin{bmatrix}
        \star & 0 \\
        0 & \star^{-1}
    \end{bmatrix}.
\end{equation}
\end{proposition}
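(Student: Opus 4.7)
My plan is to exploit the fact that $\Phi$ in (\ref{eq:Phi_Diffeo}) is a linear isomorphism between the vector spaces $X$ and $\dual{X}$, so its tangent map at every point coincides with $\Phi_*$ itself, and the pullback on covectors reduces to the algebraic transpose with respect to the duality pairing. First I would spell out the defining relation
\[
\dualpr[M]{\Phi^*e}{v} = \dualpr[M]{e}{\Phi_* v}, \qquad v\in T_xX,\quad e\in T^*_{\Phi(x)}\dual{X},
\]
where each side is understood as the duality product of Definition \ref{eq:local_inpr} applied componentwise.

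Next, I would make explicit the identifications of the cotangent spaces with spaces of forms. Since $\dualpr[M]{\cdot}{\cdot}$ is nondegenerate on $\dual{\Omega}^k\times\Omega^{n-k}$, one obtains $T^*X\cong \Omega^{n-p}\times\dual{\Omega}^{n-q}=\dual{X}$ and $T^*\dual{X}\cong \dual{\Omega}^p\times\Omega^q = X$, which justifies the identifications written in the statement. The computation then reduces to the block-by-block check that $\star$ and $\star^{-1}$ are self-dual for the duality product, i.e.\
\[
\dualpr[M]{\star\gamma}{\delta}=\dualpr[M]{\gamma}{\star\delta},
\]
and the analogous identity with $\star^{-1}$. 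Both follow directly from the symmetry $\alpha\wedge\star\beta=\beta\wedge\star\alpha$ of the $L^2$ inner product together with the skew-commutativity rule (\ref{eq:skew_wedge}). Transposing the diagonal blocks of $\Phi_*=\mathrm{diag}(\star^{-1},\star)$ from (\ref{eq:pushfor_phi}) then yields $\Phi^*=\mathrm{diag}(\star,\star^{-1})$, which unpacked on elements is precisely the assignment $(\dual{e}_1^p,e_2^q)\mapsto(\star\dual{e}_1^p,\star^{-1}e_2^q)=(e_1^{n-p},\dual{e}_2^{n-q})$ claimed by the proposition.

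The only delicate point I foresee is the bookkeeping of the $(-1)^{k(n-k)}$ signs generated by the anticommutation of the wedge product and by $\star\star$. These signs are innocuous: they cancel degree by degree once $\star^{-1}$ is consistently treated as a genuine two-sided inverse of $\star$ rather than a signed copy of it. Modulo this sign accounting, the proposition amounts to a one-line application of the definition of the pullback of a linear map, combined with the self-duality of the Hodge star under the duality product.
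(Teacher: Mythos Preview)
Your proposal is correct and follows essentially the same route as the paper: both start from the defining duality relation $\dualpr{\Phi^* e}{v}=\dualpr{e}{\Phi_* v}$ and then reduce the computation to the symmetry $\alpha\wedge\star\beta=\beta\wedge\star\alpha$ together with the skew-commutativity of the wedge. The paper carries out the sign bookkeeping that you defer (writing $\star^{-1}=(-1)^{k(n-k)}\star$ explicitly and showing the $(-1)^{k(n-k)}$ factors cancel block by block), so for a complete write-up you should simply make that cancellation explicit rather than leave it as a remark; also note that the duality product you invoke is Eq.~\eqref{eq:dual_pr}, not Definition~\ref{eq:local_inpr}.
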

\begin{proof}
The pullback is defined by duality
\begin{equation}
    \dualpr[X]{\Phi^* 
    \begin{pmatrix}
      \dual{e}^p_1 \\ 
      {e}^q_2
    \end{pmatrix}
    }{\begin{pmatrix}
        \dual{f}^p_1\\
        f^q_2
    \end{pmatrix}
    } = \dualpr[\dual{X}]{\begin{pmatrix}
      \dual{e}^p_1 \\ 
      {e}^q_2
    \end{pmatrix}}{\Phi_*\begin{pmatrix}
	\dual{f}^p_1 \\
	f^q_2
	\end{pmatrix}}.
\end{equation}
Using the definition of duality product \eqref{eq:dual_pr} one obtains
\begin{equation}
    \begin{aligned}
    \dualpr[\dual{X}]{\begin{pmatrix}
      \dual{e}^p_1 \\ 
      {e}^q_2
    \end{pmatrix}}{\Phi_*\begin{pmatrix}
	\dual{f}^p_1 \\
	f^q_2
	\end{pmatrix}} &=\int_M \dual{e}^{p}_1 \wedge \star^{-1} \dual{f}^p_1 + \int_M {e}_2^{q} \wedge \star f^q_2, \\
        &= \int_M (-1)^{p(n-p)}\dual{e}^p_1 \wedge \star \dual{f}^p_1 + \int_M (-1)^{q(n-q)}\star {e}^q_2 \wedge f^q_2, \\
        &= \int_M \star \dual{e}^p_1 \wedge \dual{f}^p_1 + \int_M (-1)^{q(n-q)}\star {e}^q_2 \wedge f^q_2, \\
        &=  \dualpr[X]{\begin{bmatrix}
        \star & 0 \\
        0 & \star^{-1}
    \end{bmatrix}\begin{pmatrix}
      \dual{e}^p_1 \\ 
      {e}^q_2
    \end{pmatrix}}{\begin{pmatrix}
	\dual{f}^p_1 \\
	f^q_2
	\end{pmatrix}}
    \end{aligned}.
\end{equation}
\end{proof}
\begin{remark}
The procedure here illustrated is general. Given a generic diffeomorphism that acts on the states, its push-forward and pullback provide the flow and effort variables in the transformed system \cite{rashad2021part1,vankerschaver2010}.
\end{remark}
}

By introducing the new flow and effort variables
\begin{equation*}
    \begin{pmatrix}
    f^{n-p}_1 \\
    \dual{f}^{n-q}_2
\end{pmatrix}
 := \Phi_*\begin{pmatrix}
     \dual{f}^p_1\\
     f^q_2
 \end{pmatrix}, \qquad \text{and} \qquad
 \begin{pmatrix}
     e^{n-p}_1\\
     \dual{e}^{n-q}_2
 \end{pmatrix}= \Phi^*
 \begin{pmatrix}
     \dual{e}^{p}_1 \\
     {e}^{q}_2
 \end{pmatrix},
\end{equation*}
\revTwo{the adjoint Dirac structure of (\ref{eq:StokesDirac}) is expressed by means of co-differential map $\d^*$ \eqref{eq:codif} as}
\begin{equation}\label{eq:AdjStokesDirac}
    \begin{pmatrix}
    {f}^{n-p}_1 \\
    \dual{f}^{n-q}_2
    \end{pmatrix} = 
    \begin{bmatrix}
        0 &  (-1)^{a_0}\d{}^* \\
        (-1)^{a_1}\d{}^* & 0 \\
    \end{bmatrix}
    \begin{pmatrix}
        \dual{e}^{p}_1\\
        {e}^{q}_2
    \end{pmatrix}, \qquad 
    \begin{pmatrix}
        {f}_\partial^{n-p} \\
        \dual{e}_\partial^{n-q}
    \end{pmatrix} = 
    \begin{bmatrix}
    \tr \star & 0 \\
    0 &  (-1)^{p+q(n-q)}\tr \star
    \end{bmatrix}
    \begin{pmatrix}
        \dual{e}^{p}_1\\
        {e}^{q}_2
    \end{pmatrix},
\end{equation}
where the following notation has been used
\begin{equation}\label{eq:a_coeff}
    a_0 = r + p(n-p) + q(n-q) + n(q + 1) +1, \qquad a_1 = n(p + 1) + 1.
\end{equation}
\revTwo{The following corollary is an immediate consequence of Prop. \ref{pr:pullback_phi}.}
\begin{corollary}\label{cor:pow_bal_Adj}
The adjoint Stokes-Dirac structure verifies the power balance
\begin{equation}\label{eq:pow_bal_Adj}
    \dualpr[M]{\dual{e}^p_1}{{f}^{n-p}_1} + \dualpr[M]{{e}^q_2}{\dual{f}^{n-q}_2} + \dualpr[\partial M]{\dual{e}^{n-q}_\partial}{f^{n-p}_\partial} = 0.
\end{equation}
\end{corollary}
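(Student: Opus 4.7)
The plan is to obtain the adjoint power balance by transporting the original balance equation \eqref{eq:bal_eq} through the diffeomorphism $\Phi$, using Proposition \ref{pr:pullback_phi} as the bridge between the two sides. The essential observation is that by construction the adjoint flows and efforts are related to the original ones by $(f^{n-p}_1, \dual{f}^{n-q}_2) = \Phi_*(\dual{f}^p_1, f^q_2)$ and $(e^{n-p}_1, \dual{e}^{n-q}_2) = \Phi^*(\dual{e}^p_1, e^q_2)$, so the interior $L^2$/duality pairing on $\dual{X}$ is, by Proposition \ref{pr:pullback_phi}, exactly equal to the interior pairing on $X$:
\begin{equation*}
\dualpr[M]{e^{n-p}_1}{\dual{f}^p_1} + \dualpr[M]{\dual{e}^{n-q}_2}{f^q_2} = \dualpr[M]{\dual{e}^p_1}{f^{n-p}_1} + \dualpr[M]{e^q_2}{\dual{f}^{n-q}_2}.
\end{equation*}

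Next I would handle the boundary term. The goal is to show that the two expressions for $\dualpr[\partial M]{\dual{e}^{n-q}_\partial}{f^{n-p}_\partial}$, one coming from the original Stokes-Dirac structure and one coming from the adjoint one in \eqref{eq:AdjStokesDirac}, coincide. From the original structure I have $f^{n-p}_\partial = \tr e^{n-p}_1$ and $\dual{e}^{n-q}_\partial = (-1)^p \tr \dual{e}^{n-q}_2$. Substituting the pullback relations $e^{n-p}_1 = \star \dual{e}^p_1$ and $\dual{e}^{n-q}_2 = \star^{-1} e^q_2$, and using the identity $\star^{-1} = (-1)^{q(n-q)}\star$ on $q$-forms (which follows from $\star\star = (-1)^{k(n-k)}$ on a Riemannian manifold), I recover precisely the boundary operators appearing in \eqref{eq:AdjStokesDirac}, in particular the sign $(-1)^{p+q(n-q)}$ in the second row. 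Hence the boundary pairing is preserved under the change of variables.

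Combining the two steps, the balance equation \eqref{eq:bal_eq} for the original Stokes-Dirac structure rewrites term by term as the claimed identity \eqref{eq:pow_bal_Adj}. The argument is essentially a coordinate change on the state manifold, and the only moderately delicate step is the boundary sign verification, since one must track both the intrinsic $(-1)^p$ coming from the original boundary operator and the $(-1)^{q(n-q)}$ produced by $\star^{-1}$ on $\Omega^q(M)$; everything else is just unpacking the definition of the pullback given in Proposition \ref{pr:pullback_phi}. No further integration by parts or use of Stokes' theorem is required, because those ingredients are already encoded in the original balance equation \eqref{eq:bal_eq}.
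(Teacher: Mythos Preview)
Your proposal is correct and follows essentially the same approach as the paper: both arguments reduce the adjoint power balance to the original one \eqref{eq:bal_eq} by using that the pullback $\Phi^*$ preserves duality pairings (Proposition~\ref{pr:pullback_phi}). The paper spells out the identity $\dualpr[M]{e^{n-p}_1}{\dual{f}^p_1} = \dualpr[M]{\dual{e}^p_1}{f^{n-p}_1}$ via a short Hodge/wedge computation and leaves the boundary term implicit (it is literally the same variable in both structures), whereas you invoke Proposition~\ref{pr:pullback_phi} abstractly for the interior terms and add an explicit check that the adjoint boundary operators in \eqref{eq:AdjStokesDirac} reproduce the original $f^{n-p}_\partial,\dual{e}^{n-q}_\partial$; this extra verification is sound and in fact makes the argument slightly more complete.
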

\begin{proof}
    The property follows from the fact that the pullback \eqref{eq:pullback_phi} preserves duality products. It can be  verified by a direct computation that:
    \begin{equation}
    \begin{aligned}
    \dualpr[M]{e^{n-p}_1}{\dual{f}^p_1} &= \int_M \star \dual{e}^p_1 \wedge \star {f}_1^{n-p}, \qquad &\text{Symmetry of the inner product}, \\
    &= \int_M {f}_1^{n-p} \wedge \star (\star \dual{e}^p_1), \qquad &\text{Property of the the Hodge star}, \\
    &= (-1)^{p(n-p)} \int_M {f}_1^{n-p} \wedge  \dual{e}^p_1, \qquad &\text{Skew-symmetry of the wedge},  \\
    &= \dualpr[M]{\dual{e}^p_1}{{f}_1^{n-p}}.
    \end{aligned} 
\end{equation}
Analogous computations are performed for the second \revOne{flow} and effort variables $f^q_2$ and $\dual{e}^{n-q}_2$.
\end{proof}
To simplify the expressions of the adjoint Dirac structure and relate the new indices $a_0$ and $a_1$ to the $p,q,n$, the following proposition is of value.
\revTwo{
\begin{proposition}\label{pr:parity_a0_a1}
The coefficients $a_0, \;1$ and $a_1, \; 1+ r + p(n-p) + q(n-q)$ have the same parity which means
\begin{equation}
    a_0 \equiv 1 \mod{2}, \qquad a_1 \equiv 1+r+p(n-p) +q(n-q) \mod{2},
\end{equation}
where $\mod{2}$ denotes the modulo of the division by $2$.
\end{proposition}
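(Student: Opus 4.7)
The plan is to reduce both congruences to elementary parity statements by eliminating $n$ and $r$. Using the constraints $p+q = n+1$ (so that $n = p+q-1$, $n-p = q-1$, $n-q = p-1$) and $r = pq+1$, each of the exponents appearing in $a_0$ and $a_1$ becomes a polynomial in $p$ and $q$ alone, which can then be simplified modulo $2$.

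First I would handle $a_0$. The claim $a_0 \equiv 1 \pmod 2$ is equivalent to showing that
\begin{equation*}
r + p(n-p) + q(n-q) + n(q+1) \equiv 0 \pmod 2.
\end{equation*}
Substituting the relations above gives $p(n-p) = pq - p$, $q(n-q) = pq - q$, and $n(q+1) = (p+q-1)(q+1) = pq + p + q^2 - 1$. Adding $r = pq + 1$, the sum collapses to $4pq + q^2 - q = q(4p + q - 1)$, whose parity is that of $q(q-1)$. Since $q(q-1)$ is the product of two consecutive integers, it is even, and the first congruence follows.

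Next I would treat $a_1$. The claim is equivalent to
\begin{equation*}
n(p+1) \equiv r + p(n-p) + q(n-q) \pmod 2.
\end{equation*}
The same substitutions yield $n(p+1) = p^2 + pq + q - 1$ and, as above, $r + p(n-p) + q(n-q) = 3pq + 1 - p - q$. Their difference simplifies modulo $2$ to $p^2 + p = p(p+1)$, which is again a product of consecutive integers and therefore even.

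The argument is purely arithmetic — there is no conceptual obstacle, only the bookkeeping of signs and exponents. The only place one has to be careful is in consistently applying $n = p+q-1$ to all three expressions $n-p$, $n-q$, and $n(q+1)$ or $n(p+1)$ before reducing modulo $2$; a sign error there would be the one way this routine calculation could go wrong.
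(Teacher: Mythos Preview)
Your proof is correct and follows essentially the same approach as the paper: both reduce the parity claims to polynomial identities in $p$ and $q$ (using $n=p+q-1$ and $r=pq+1$) and simplify modulo $2$ to a product of consecutive integers. The only cosmetic difference is that the paper keeps $n$ in the computation slightly longer before substituting, arriving at $q(q-1)$ for the $a_1$ case where you arrive at $p(p+1)$; the underlying arithmetic is the same.
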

\begin{proof}
Reported in \ref{app:proofs}.
\end{proof}
}
\noindent This results implies that:
\[
(-1)^{a_0}=-1
\qquad \text{and}
\qquad
(-1)^{a_1}=(-1)^{1+r+p(n-p)+q(n-q)}
\]
and this could be used to express \eqref{eq:AdjStokesDirac} with the coefficients $p,q,n$ and $r$, and will be of importance later. The adjoint Stokes-Dirac structure introduces the codifferential and its important role in discretization. 

\subsection{Associated adjoint port-Hamiltonian system}

Given the adjoint Stokes-Dirac structure \eqref{eq:AdjStokesDirac} and the pH system \eqref{eq:pHsys}, the adjoint pH system is obtained by considering the isomorphism
\[ 
\begin{pmatrix}
    \alpha^{n-p} \\
    \dual{\beta}^{n-q}
\end{pmatrix} := \Phi\begin{pmatrix}
    \dual{\alpha}^{p} \\
    \beta^{q}
\end{pmatrix} = 
\begin{pmatrix}
    \star^{-1}\dual{\alpha}^{p} \\
    \star {\beta}^{q}
\end{pmatrix},
\]
and considering the new Hamiltonian as a function of the state variables of the adjoint system
\begin{equation}
   \widetilde{H}(\alpha^{n-p}(\xi, t), \dual{\beta}^{n-q}(\xi, t)) = \int_M  \widetilde{\mathcal{H}}(\alpha^{n-p}(\xi, t), \dual{\beta}^{n-q}(\xi, t)).
\end{equation}
The energy density $\widetilde{\mathcal{H}}$ corresponds to $\mathcal{H}$ under the isomorphism $\Phi$. The variational derivative, being defined by duality product (cf. Def. \ref{def:var_der}), undergoes the same pullback as the efforts, so that
\begin{equation}\label{eq:dual_coenergies}
    \diffd{\widetilde{H}}{{\alpha}^{n-p}} = \dual{e}^p_1, \qquad  \diffd{\widetilde{H}}{\dual{\beta}^{n-q}} = {e}^q_2.
\end{equation}

The port-Hamiltonian adjoint system is obtained substituting into the adjoint Stokes-Dirac structure \eqref{eq:AdjStokesDirac} the dual effort variables \eqref{eq:dual_coenergies} and the dual \revOne{flow variables} 
\begin{equation}\label{eq:dual_flows}
{f}^{n-p}_1 = - \partial_t {\alpha}^{n-p}, \qquad \dual{f}^{n-q}_2 = - \partial_t \dual{\beta}^{n-q},   
\end{equation}
which yields
\begin{equation}\label{eq:AdjPHsys}
    \begin{pmatrix}
        \partial_t {\alpha}^{n-p} \\
        \partial_t \dual{\beta}^{n-q} \\
    \end{pmatrix} = -
    \begin{bmatrix}
        0 &  (-1)^{a_0}\d{}^* \\
        (-1)^{a_1}\d{}^* & 0 \\
    \end{bmatrix}
    \begin{pmatrix}
        \delta_{\alpha} \widetilde{H}^p \\
        \delta_{\dual{\beta}} \widetilde{H}^q
    \end{pmatrix}.
\end{equation}

Given Assumption \ref{ass:quad_Ham}, the adjoint energy is a quadratic function in the adjoint state variables
\begin{equation}\label{eq:adjoint_energy}
    \widetilde{\mathcal{H}}({\alpha}^{n-p}(\xi, t), \dual{\beta}^{n-q}(\xi, t)) = \frac{1}{2} {A}^{n-p} {\alpha}^{n-p} \wedge \star {\alpha}^{n-p} + \frac{1}{2} \dual{B}^{n-q} \dual{\beta}^{n-q} \wedge \star  \dual{\beta}^{n-q},
\end{equation}
where the operators ${A}^{n-p}$, respectively $\dual{B}^{n-q}$ are the dual of $\dual{A}^p$, respectively ${B}^{q}$ defined implicitly by
\begin{equation}
\begin{aligned}
    \dualpr[M]{\dual{A}^p\dual{\alpha}^{p}}{{\alpha}^{n-p}} &= \dualpr[M]{\dual{\alpha}^p}{{A}^{n-p} {\alpha}^{n-p}}, \\
    \dualpr[M]{B^q\beta^{q}}{\dual{\beta}^{n-q}} &= \dualpr[M]{\beta^{q}}{\dual{B}^{n-q} \dual{\beta}^{n-q}},
\end{aligned}
\end{equation}
are bounded from above and below, symmetric and positive definite with respect to the $L^2$ inner product. Furthermore they verify the commutativity property $\star \dual{A}^p = {A}^{n-p} \star, \; \star B^q = \dual{B}^{n-p} \star$, thus are explicitly expressed by
\begin{equation}
    {A}^{n-p} = (-1)^{p(n-p)} \star \dual{A}^{p} \star, \qquad \dual{B}^{n-q} = (-1)^{q(n-q)} \star B^q \star.
\end{equation}

The variational derivative of the adjoint energy then reads
\begin{equation}
    \delta_{\alpha} \widetilde{H}^p = (-1)^{p(n-p)} \star  {A}^{n-p}{\alpha}^{n-p}, \qquad
    \delta_{\dual{\beta}} \widetilde{H}^q = (-1)^{q(n-q)} \star \dual{B}^{n-q} \dual{\beta}^{n-q}.
\end{equation} 
For what concerns the boundary conditions, it is important to notice that the adjoint system carries the same boundary conditions \eqref{eq:u} as the \revTwo{canonical} one, but expressed by means of the adjoint efforts
 \begin{equation}
    \tr\star \dual{e}^p_1 \vert_{\Gamma_1} = u^{n-p}_1, \qquad
    (-1)^{p+q(n-q)} \tr \star {e}^q_2 \vert_{\Gamma_2} = \dual{u}^{n-q}_2.
 \end{equation}
and analogously for the outputs.

\begin{remark}
	By comparing the adjoint energy density \eqref{eq:adjoint_energy} to the original one in \eqref{eq:H_den} it can be seen that the adjoint port-Hamiltonian system \eqref{eq:AdjPHsys} describes the dynamics governing the \textit{true-form} representation of the $\alpha$ state and the \textit{pseudo-form} representation of the $\beta$ state. On the other hand, the canonical port-Hamiltonian system \eqref{eq:pHsys} describes the dynamics of $\alpha$ as a \textit{pseudo-form} and $\beta$ as a \textit{true-form}. Combining the two representations is the key idea behind the dual-field formulation which we discuss next.
\end{remark}

\section{Weak dual field formulation}\label{sec:weak_df}

We now present the weak dual field formulation of the port-Hamiltonian model, inspired by the work of \cite{zhang2021mass}, but now including general mixed boundary conditions. In this formulation, we combine together the canonical and adjoint port-Hamiltonian systems described in the previous section. This combination will eliminate the necessity of using an explicit Hodge star operator, which will be accounted for using the integration by parts formula in the weak adjoint system \eqref{eq:intbyparts_codif}. \revTwo{As a consequence, two decoupled port-Hamiltonian systems are obtained: one based on outer-oriented forms, and a second, based on inner-oriented forms. In the following we will refer to the former as the primal system and at the latter as the dual system.}

\subsection{Motivation of the approach}

The novelty of this approach is that the Hamiltonian can now be rewritten as a function of the \revTwo{original}  state variables \revTwo{and their Hodge duals}:
 \begin{equation}
H_T(\dual{\alpha}^p, \beta^{q}, {\alpha}^{n-p}, \dual{\beta}^{n-q}) = H(\dual{\alpha}^p, \beta^{q}) + \widetilde{H}({\alpha}^{n-p}, \dual{\beta}^{n-q})= \int_M \mathcal{H}_T,
\end{equation}
with associated density
\revTwo{
\begin{equation*}
\begin{aligned}
    \mathcal{H}_T
(\dual{\alpha}^p, \beta^{q} 
,{\alpha}^{n-p}, \dual{\beta}^{n-q})
&=\frac{1}{2} \dual{A}^p \dual{\alpha}^p \wedge \star \dual{\alpha}^p + \frac{1}{2}  B^q \beta^{q} \wedge \star  \beta^{q} \\
&+ \frac{1}{2} {A}^{n-p} {\alpha}^{n-p} \wedge \star {\alpha}^{n-p} + \frac{1}{2} \dual{B}^{n-q} \dual{\beta}^{n-q} \wedge \star  \dual{\beta}^{n-q} \\
&= {\alpha}^{n-p} \wedge \dual{A}^p \dual{\alpha}^p
+ B^q \beta^{q} \wedge \dual{\beta}^{n-q}. \\
\end{aligned}
\end{equation*}
}
 \revTwo{The two expressions for the Hamiltonian density are equivalent at the continuous level but not when one seeks a discrete representation of the variational derivative. In the second expression, the Hodge star operators have been removed and effectively replaced by a dual-field representation of the variables. This is important because the discretization can be performed without relying on a discrete Hodge star. The effort variables are now computed using the latter expression of the total Hamiltonian}
\begin{equation}
\begin{aligned}
    e^{n-p}_1 &= \diffd{H_T}{\dual{\alpha}^p} = {A}^{n-p} {\alpha}^{n-p}, \\
    \dual{e}^{n-q}_2 &= \diffd{H_T}{\beta^{q}} = (-1)^{q(n-q)} \dual{B}^{n-q} \dual{\beta}^{n-q}, \\
\end{aligned} \qquad \qquad
\begin{aligned}
    \dual{e}^p_1 &= \diffd{H_T}{{\alpha}^{n-p}} = (-1)^{p(n-p)}\dual{A}^p \dual{\alpha}^p, \\
    {e}^q_2 &= \diffd{H_T}{\dual{\beta}^{n-q}} = B^q \beta^{q}. \\
\end{aligned}
\end{equation}
The constitutive equations are then incorporated directly in the dynamics by reducing the state variables in terms of the efforts \revTwo{(in the literature this is called co-energy formulation \cite{brugnoli2020})}. This reduces the number of equations to be solved and formulate the problem in terms of the most important variables, as the efforts are subjected to the boundary conditions. \revTwo{To express the dynamics in term of the effort variables the material operators $\dual{A}^p, B^q, A^{n-p}, \dual{B}^{n-q}$ are inverted (this is possible since these operator are positive definite and bounded from above and below for physical reasons)
\begin{equation}\label{eq:effort_CE}
     \begin{aligned}
     \dual{\alpha}^p &= (-1)^{p(n-p)}\dual{C}^p \dual{e}^p_1, \\
      \beta^{q} &= E^q {e}^q_2, \\
     \end{aligned} \qquad 
     \begin{aligned}
      \alpha^{n-p} &= {C}^{n-p} e_1^{n-p}, \\
      \dual{\beta}^{n-q} &= (-1)^{q(n-q)}  \dual{E}^{n-q} \dual{e}^{n-q}_2, 
     \end{aligned}
\end{equation}
where the tensors $\dual{C}^p:= (\dual{A}^p)^{-1}, \; E^q := (B^q)^{-1}, \; {C}^{n-p}:=({A}^{n-p})^{-1} , \; \dual{E}^q := (\dual{B}^{n-q})^{-1}$ have been introduced. These tensors are commonly introduced in mixed finite element (for example in linear elasticity they represent the density and the compliance \cite{arnold2014weak}, whereas in electromagnetism they represent the electric permittivity and the magnetic permeability \cite{asad2019maxwell}). 
}

\subsection{Dual field Stokes-Dirac structure}
Now we will present the dual field Stokes-Dirac structure which will be constructed by combining the Stokes-Dirac structure \eqref{eq:StokesDirac} and its adjoint \eqref{eq:AdjStokesDirac}. In order to obtain the weak form, we take the inner product of \eqref{eq:StokesDirac} and \eqref{eq:AdjStokesDirac} with test differential forms of appropriate degrees and use the integration by parts formula \eqref{eq:intbyparts_codif} for the adjoint system. \revTwo{This gives rise to two uncoupled systems. One formulation contains outer-oriented forms only, except for the boundary term arising from the integration by parts. We will refer to this formulation as the primal formulation. The second formulation contains inner-oriented forms only, except once again for the boundary term. We will refer to this formulation as the dual formulation.} 
\revTwo{
\paragraph{Primal Stokes-Dirac structure}
The primal Stokes-Dirac structure contains outer oriented forms. Considering that $p+q = n+1$, it holds $n-q=p-1$ and $n-p=q-1$. This shows that the primal formulation corresponds to a first mixed discretization in which the variables are related by $\d^{p-1}$

\begin{equation}\label{eq:primal_SD}
    \begin{aligned}
    \inpr[M]{\dual{v}^p}{\dual{f}^p_1} &= (-1)^r\inpr[M]{\dual{v}^p}{\d \dual{e}^{p-1}_2}, \\
      \inpr[M]{\dual{v}^{p-1}}{\dual{f}^{p-1}_2} &= (-1)^{a_1} \inpr[M]{\d\dual{v}^{p-1}}{\dual{e}^p_1} - (-1)^{a_1} \dualpr[\partial M]{\dual{v}^{p-1}}{f_\partial^{q-1}}, \\
      \dual{e}_\partial^{p-1} &= (-1)^p \tr \dual{e}_2^{p-1} \in H^{-1/2}\dual{\Omega}^{p-1}(\partial M),
    \end{aligned} \qquad
    \begin{aligned}
    &\forall \dual{v}^p \in H\dual{\Omega}^p(M), \\
    &\forall \dual{v}^{p-1} \in H\dual{\Omega}^{p-1}(M),\\
    &
    \end{aligned}
\end{equation}
where in the integration by parts, the fact that $H^{-1/2}{\Omega}^{q-1}(\partial M) \ni f_\partial^{q-1} = \tr e_1^{q-1} = \tr \star \dual{e}_1^p$ has been used. The fractional Sobolev spaces $H^{-1/2}{\Omega}^{q-1}(\partial M), \; H^{-1/2}\dual{\Omega}^{p-1}(\partial M)$ arise from the fact that the trace operator on $H^1\Omega^k(M)$ (the space  of $k$-forms whose coefficients are in $H^1(M)$) extends to bounded linear operator $\tr: H\Omega^k(M) \rightarrow H^{-1/2}\Omega^{k}(\partial M)$ \cite[Theorem 6.3]{arnold2018finite}.

\paragraph{The dual Stokes-Dirac structure}
The dual Stokes-Dirac structure contains inner oriented forms. This formulation corresponds to a second mixed discretization in which the variables are related by $\d^{q-1}$ 

\begin{equation}\label{eq:dual_SD}
    \begin{aligned}
    \inpr[M]{{v}^{q-1}}{{f}^{q-1}_1} &= -\inpr[M]{\d{v}^{q-1}}{{e}_2^q} - (-1)^{(p-1)(q-1)} \dualpr[\partial M]{{v}^{q-1}}{\dual{e}_\partial^{p-1}}, \\
    \inpr[M]{v^q}{f^q_2} &= \inpr[M]{v^q}{\d e^{q-1}_1}, \\
    f_\partial^{q-1} &= \tr e_1^{q-1} \in H^{-1/2}\Omega^{q-1}(\partial\Omega),
    \end{aligned} \qquad
    \begin{aligned}
    &\forall {v}^{q-1} \in H\Omega^{q-1}(M), \\
    &\forall v^q \in H\Omega^q(M). \\
    &
    \end{aligned}
\end{equation}
where it has been used $a_0 \equiv 1, \mod 2$ and $\tr \star e_2^q= (-1)^{q(n-q)} \tr e_2^{p-1} = (-1)^{p+q(n-q)} e_\partial^{p-1}$.
}

\subsection{Boundary conditions enforcement}\label{subsection:weak_bcs}
\revTwo{The primal and dual Stokes-Dirac structures have an associated primal and dual port-Hamiltonian system respectively. The Stokes-Dirac structures do not carry boundary conditions information but the associated port-Hamiltonian systems do. A number of different techniques can be used to enforce boundary conditions (cf. \cite{benner2015timebc} and references therein). We opt for a strong imposition of the essential boundary condition (i.e. the one that does not appear in the integration by parts) by direct assignment of the boundary degrees of freedom in each resulting port-Hamiltonian system. In this way two uncoupled port-Hamiltonian systems are obtained. Furthermore, this strategy will ensure that the power flow is correctly represented (up to the approximation error introduced by the polynomial interpolation) as it will be proven later.}

\revTwo{
\subsection{Strong primal and dual port-Hamiltonian system} 
In this section, the primal-dual port-Hamiltonian system will be constructed. For sake of clarity the systems are first presented in strong form and then the associated weak form is presented.

\paragraph{Strong primal port-Hamiltonian system}
Using the first line of \eqref{eq:pHsys} and the second line of \eqref{eq:AdjPHsys}, together with the expression of the state variables in terms of the effort in Eq. \eqref{eq:effort_CE} and Pr. \ref{pr:parity_a0_a1}, the outer oriented primal system is obtained: 
 \begin{equation}\label{eq:strong_primalPH}
    \begin{bmatrix}
        \dual{C}^p & 0 \\
        0 & \dual{E}^{p-1}
    \end{bmatrix}
    \diffp{}{t}\begin{pmatrix}
        \dual{e}^p_1 \\
        \dual{e}^{p-1}_2 
    \end{pmatrix} = (-1)^{p}
    \begin{bmatrix}
        0 & \d \\
        -\d^* & 0\\
    \end{bmatrix}
    \begin{pmatrix}
        \dual{e}^p_1 \\
        \dual{e}^{p-1}_2 
    \end{pmatrix}, \qquad
    \begin{aligned}
        \tr \star \dual{e}_1^p|_{\Gamma_1} = u_1^{q-1}, \\
        (-1)^p\tr \dual{e}_2^{p-1}|_{\Gamma_2} = \dual{u}_2^{p-1}, \\
    \end{aligned}
 \end{equation}
 where $\dual{e}_1^p \in \dual{\Omega}^p(M)$ and $\dual{e}_2^{p-1} \in \dual{\Omega}^{p-1}(M)$ and Proposition \ref{pr:parity_a0_a1} has been used, together with the fact that $(-1)^{r+1+p(n-p)}= (-1)^p$.

\paragraph{Strong dual port-Hamiltonian system}
Using the first line of \eqref{eq:AdjPHsys} and the second line of \eqref{eq:pHsys}, together with the expression of the state variables in terms of the effort in Eq. \eqref{eq:effort_CE}, the inner oriented dual system is obtained: 
\begin{equation}\label{eq:strong_dualPH}
    \begin{bmatrix}
        {C}^{q-1} & 0 \\
        0 & {E}^{q}
    \end{bmatrix}
    \diffp{}{t}\begin{pmatrix}
        {e}^{q-1}_1 \\
        {e}^{q}_2 
    \end{pmatrix} =
    \begin{bmatrix}
        0 & \d^* \\
        -\d & 0\\
    \end{bmatrix}
    \begin{pmatrix}
        {e}^{q-1}_1 \\
        {e}^{q}_2 
    \end{pmatrix}, \qquad
    \begin{aligned}
        \tr {e}_1^{q-1}|_{\Gamma_1} = u_1^{q-1}, \\
        (-1)^{p+q(n-q)}\tr \star {e}_2^{q}|_{\Gamma_2} = \dual{u}_2^{p-1}, \\
    \end{aligned}
 \end{equation}
 where ${e}_1^{q-1} \in {\Omega}^{q-1}(M)$ and ${e}^{q}_2  \in {\Omega}^q(M)$ and Proposition \ref{pr:parity_a0_a1} has been used.

\subsection{Weak primal and dual port-Hamiltonian system} 

The weak formulation for the primal and dual system is here presented. The material tensors are supposed to be positive and bounded above and below operators from $L^2\Omega^k(M)$ to $L^2\Omega^k(M)$ (analogously to what is done in \cite[Sec. 7.3]{arnold2006acta}) and are therefore allowed to be discontinuous. For this reason, a weak formulation of the dynamics is necessary. The codifferential is here interpreted weakly. This will allow incorporating the natural boundary condition as a system input.

\paragraph{The weak primal port-Hamiltonian system}
The weak formulation for the primal system reads: find $\dual{e}^p_1 \in H\dual{\Omega}^{p}(M), \; \dual{e}^{p-1}_2 \in H\Omega^{q}(M)$ such that $(-1)^p \tr \dual{e}^{p-1}_2 \vert_{\Gamma_2} = \dual{u}^{p-1}_2 \in H^{-1/2}\Omega^{p-1}(\Gamma_2)$ and
 \begin{equation}\label{eq:weak_primalPH}
    \begin{aligned}
    \inpr[M]{\dual{v}^p}{\dual{C}^p \partial_t \dual{e}^p_1} &= (-1)^{p}\inpr[M]{\dual{v}^p}{\d \dual{e}^{p-1}_2}, \\
     \inpr[M]{\dual{v}^{p-1}}{\dual{E}^{p-1} \partial_t \dual{e}^{p-1}_2} &= (-1)^{p} \{- \inpr[M]{\d\dual{v}^{p-1}}{\dual{e}^p_1} + \dualpr[\Gamma_1]{\dual{v}^{p-1}}{u^{q-1}_1}\},
    \end{aligned}  \qquad 
    \begin{aligned}
    &\forall \dual{v}^p \in H\dual{\Omega}^p(M), \\
    &\forall \dual{v}^{p-1} \in H\dual{\Omega}^{p-1}(M, \Gamma_2), \\
    \end{aligned}
 \end{equation}
where $u_1^{q-1} \in H^{-1/2}\Omega^{q-1}(\Gamma_1)$ and $H\dual{\Omega}^{p-1}(M, \Gamma_2)$ is a Sobolev spaces with boundary conditions 
\begin{equation}
        H\dual{\Omega}^{p-1}(M, \Gamma_2) := \{\dual{w}^{p-1} \in H\dual{\Omega}^{p-1}(M) \vert \; \tr \dual{w}^{p-1}\vert_{\Gamma_2} = 0 \}. 
\end{equation}

\paragraph{The weak dual port-Hamiltonian system}
For the dual system, the following weak formulation is obtained: find ${e}^{q-1}_1 \in H{\Omega}^{p}(M), \; {e}^{q}_2 \in H\Omega^{q}(M)$ such that $\tr e^{q-1}_1 \vert_{\Gamma_1} = u_1^{q-1} \in H^{-1/2}\Omega^{q-1}(\Gamma_1), $ and
 \begin{equation}\label{eq:weak_dualPH}
    \begin{aligned}
    \inpr[M]{{v}^{q-1}}{{C}^{q-1} \partial_t {e}^{q-1}_1} &= \inpr[M]{\d {v}^{q-1}}{ {e}^{q}_2} +(-1)^{(p-1)(q-1)}\dualpr[\Gamma_2]{v^{q-1}}{\dual{u}_2^{p-1}}, \\
     \inpr[M]{{v}^{q}}{{E}^{q} \partial_t {e}^{q}_2} &= -\inpr[M]{{v}^{q}}{\d{e}^{q-1}_1},
    \end{aligned}  \qquad 
    \begin{aligned}
    &\forall {v}^{q-1} \in H{\Omega}^{q-1}(M, \Gamma_1), \\
    &\forall {v}^{q} \in H{\Omega}^{q}(M), \\
    \end{aligned}
 \end{equation}
where $\dual{u}^{p-1}_2 \in H^{-1/2}\dual{\Omega}^{p-1}(\Gamma_2)$ and $H{\Omega}^{p-1}(M, \Gamma_1)$  is defined by
\begin{equation}
        H{\Omega}^{q-1}(M, \Gamma_1) := \{{w}^{q-1} \in H{\Omega}^{q-1}(M) \vert \; \tr {w}^{q-1}\vert_{\Gamma_1} = 0 \}. 
\end{equation}
}

\section{Space discretization}\label{sec:space_discr}

In this section, the space discretization of the dual-field port-Hamiltonian model is performed using the trimmed polynomial family (whose construction is discussed in \revTwo{\cite{arnold2006acta,arnold2018finite}}).
First, we start by introducing the discrete operators on the finite-element space which will mimic their continuous counterparts.
The crucial point is that the employed finite element forms form a discrete de Rham complex, as stated in \cite[Lemma 3.8]{arnold2006acta}. \\

For notational simplicity the trimmed polynomial space will be denoted by $\mathcal{V}_{s, h}^k = \mathcal{P}^-_s\Omega^k(\mathcal{T}_h)$, where $k$ is the degree of the differential form, $s$ the polynomial degree associated to the interpolating bases functions and $h$ the mesh size.
\revTwo{Furthermore, the notation $[\mathbf{A}]_i^j$ will be used to denoted the element of matrix $\mathbf{A}$ corresponding to the $i$-th row and the $j$-th column.}

\subsection{Mimetic operations}
For a generic $k$-discrete form $\mu_{h}^k \in \mathcal{V}_{s, h}^k$, one has 
\begin{equation}\label{eq:trimmed_basis}
    \mu_{h}^k = \sum_{i=1}^{N_{k, s}} \phi_{s, i}^{k}(\xi) \mu_i^k, 
\end{equation}
where $N_{k, s}$ is the number of degrees of freedom for $\mathcal{V}_{s, h}^k$, the degree of freedom $\mu_i^k \in \bbR$ is given \revTwo{in \cite[Eq. 5.2]{arnold2006acta},} and $\phi_{s, i}^{k} : M \rightarrow  \mathcal{V}_{s, h}^k \subset H\Omega^k(M)$ is a section of $\mathcal{V}_{s, h}^k$, corresponding to a finite element basis function. \revTwo{For an outer oriented form $\dual{\mu}_{h}^k \in \dual{\mathcal{V}}_{s, h}^k$ the following notation is used for the finite element expansion
\begin{equation}\label{eq:trimmed_basis_hat}
    \dual{\mu}_{h}^k = \sum_{i=1}^{N_{k, s}} \dual{\phi}_{s, i}^{k}(\xi) \dual{\mu}_i^k, 
\end{equation}
}

\paragraph{Inner product}
Given two discrete forms $(\nu_h^k, \mu_h^k) \in \mathcal{V}_{s, h}^k$, the inner product reads
\begin{equation}\label{eq:alg_inner}
    \inpr[M]{\nu_h^k}{\mu_h^k} = (\bm{\nu}^k)^\top \mathbf{M}^k_s \bm{\mu}^k,
\end{equation}
where $\bm{\nu}^k, \; \bm{\mu}^k \in \bbR^{N_{k, s}}$ are the vectors collecting the degrees of freedom $\mu_i^k, \; \mu_i^k$ respectively and the mass matrix $\mathbf{M}^k_s \in \mathbb{R}^{N_{k, s}\times N_{k, s}}$ of order $k$ (symmetric and positive definite) is computed as  
$$[\mathbf{M}^k]_{i}^j = \inpr[M]{\phi_{s, i}^{k}}{\phi_{s, j}^{k}}.$$

\paragraph{Duality product over the domain}
Given two discrete forms variables $\lambda_h^{n-k} \in \mathcal{V}_{s, h}^{n-k}, \; \mu_h^k \in \mathcal{V}_{s, h}^{k}$, their duality product over the \revOne{domain} is expressed as
\begin{equation}\label{eq:alg_dualpr}
    \dualpr[M]{\lambda_h^{n-k}}{\mu_h^k} = (\bm{\lambda}^{n-k})^\top \mathbf{L}^k_s \bm{\mu}^k,
\end{equation}
where the matrix $\mathbf{L}^k_s \in \mathbb{R}^{N_{n-k, s}\times N_{k, s}}$ is computed via $[\mathbf{L}^k_s]_{i}^j = \dualpr[M]{\phi_{s, i}^{n-k}}{\phi_{s, j}^k}$. Since the wedge product, used in the definition of dual product introduced in Eq.(\ref{eq:dual_pr}), is \revTwo{alternating}, 
it  could be seen that the following holds
\begin{equation}
    \mathbf{L}^k_s = (-1)^{k(n-k)} (\mathbf{L}^{n-k}_s)^\top,
\end{equation}
\revTwo{where the matrix $\mathbf{L}^{n-k}_s \in \mathbb{R}^{N_{k, s}\times N_{n-k, s}}$ is computed via $[\mathbf{L}^{n-k}_s]_{i}^j = \dualpr[M]{\phi_{s, i}^k}{\phi_{s, j}^{n-k}}$.}

\paragraph{Exterior derivative}
The expression of the exterior derivative is here specialized for the inner and duality product. Given a form $\nu_h^{k+1} \in \mathcal{V}_{s, h}^{k+1}$ and $\mu_h^k \in \mathcal{V}_{s, h}^{k}$ ($k\le n-1$), the inner product of $\nu_h^{k+1}$ and $\d \mu_h^k$ is expressed by
\begin{equation}\label{eq:alg_d_inner}
    \inpr[M]{\nu_h^{k+1}}{\d\mu_h^k} = (\bm{\nu}^{k+1})^\top \mathbf{D}^{k}_s \bm{\mu}^{k}
\end{equation}
where $\mathbf{D}^{k}_s \in \mathbb{R}^{N_{k+1, s} \times N_{k, s}}$ is computed as  $[\mathbf{D}^{k}_s]_{i}^j = \inpr[M]{\phi_{s, i}^{k+1}}{\d\phi_{s, j}^{k}}.$ Given $\lambda_h^{n-k-1} \in \mathcal{V}_{s, h}^{n-k-1}$ the duality product between $\lambda_h^{n-k-1}$ and $\mu_h^k$ reads
\begin{equation}\label{eq:alg_d_dualpr}
    \dualpr[\partial M]{\lambda_h^{n-k-1}}{\d \mu_h^k} = (\bm{\lambda}^{n-k-1})^\top \mathbf{G}^k_s \bm{\mu}^k, 
\end{equation}
where $\mathbf{G}^{k}_s \in \mathbb{R}^{N_{n-k-1, s} \times N_{k, s}}$ is computed as  $[\mathbf{G}^{k}_s]_{i}^j = \dualpr[M]{\phi_{s, i}^{n-k-1}}{\d\phi_{s, j}^{k}}.$ 

\revTwo{
\begin{remark}
The duality product over the domain is not used by the numerical scheme. However, the scheme is such to preserve the integration by parts formula \eqref{eq:int_byparts_d} that contains it.
\end{remark}}

\paragraph{Trace}
For a discrete differential form of order $k \le n-1$ the trace operator is also defined. Since the basis forms are defined locally for each element of the mesh, when the trace of a form is considered the simplices that do not lie on the boundary can be discarded in the expansion. Denoting the boundary of the simplicial complex by
$\Delta_j(\partial \mathcal{T}_h) \subset \Delta_j(\mathcal{T}_h)$, the trace of a discrete $k$-form $\mu_h \in \mathcal{V}_{s, h}^k$ reads
\begin{equation}
    \tr \mu_h^k = \sum_{i=1}^{N_{k, s}} \tr(\phi_{s, i}^{k}(\xi)) \mu_i^k = \sum_{l=1}^{N_{k, s}^\partial} \psi_{s, l}^{k}(\xi) \mu_{l, \partial}^k.
\end{equation}
where $N_{k, s}^\partial = \sum_{j=k}^{n-1}\# \Delta_j(\partial \mathcal{T}_h)$ is the number of all $j$-simplices (with $k\le j\le n-1$) along the boundary for a polynomial differential form of order $k$. The degrees of freedom along the boundary $\mu_{l, \partial}^k$ are associated to $j-$simplices $\sigma_{j, \partial}^l, \; k\le j\le n-1$ lying on the boundary. The trace matrix simply collects them
\begin{equation}\label{eq:alg_trace}
    \bm{\mu}_\partial^k = \mathbf{T}^k_s \bm{\mu}^k, \qquad 
    [\mathbf{T}^k_s]_{l}^i = 
\begin{cases}
    1, \quad \text{if } \sigma_{j, \partial}^l \equiv \sigma_{j}^i \implies \psi_l^{k}(\xi) \equiv \tr(\phi_{s, j}^{k}(\xi))\\
    0, \quad \text{otherwise},
\end{cases} \qquad 
\begin{aligned}
    \forall l = 1, \dots, N_{k, s}^\partial, \\
    \forall i = 1, \dots, N_{k, s}.
\end{aligned}  
\end{equation}

\paragraph{Duality product over the boundary}
Given two discrete forms variables $\lambda_h \in \mathcal{V}_{s, h}^{n-k-1}, \; \mu_h^k \in \mathcal{V}_{s, h}^{k}$, their boundary duality product is expressed as
\begin{equation}\label{eq:alg_dualpr_bd}
    \dualpr[\partial M]{\lambda_h^{n-k-1}}{\mu_h^k} = (\bm{\lambda}^{n-k-1})^\top \mathbf{L}^k_{s, \partial} \bm{\mu}^k,
\end{equation}
where the matrix $\mathbf{L}^k_{s, \partial} \in \mathbb{N}^{N_{n-k-1, s}\times N_{k, s}}$ is computed via $[\mathbf{L}^k_{s, \partial}]_{i}^j = \dualpr[\partial M]{\phi_{s, i}^{n-k-1}}{\phi_{s, j}^k}$ and satisfies
\begin{equation}
    \mathbf{L}^k_{s, \partial} = (-1)^{k(n-k-1)} (\mathbf{L}^{n-k-1}_{s, \partial})^\top.
\end{equation}
Using the trace operator relation \eqref{eq:alg_trace}, the duality product can be expressed using the basis functions that lie on the boundary, i.e. $\psi_{s, l}^k$
\begin{equation}
\label{eq:alg_wedge_bd}
    \dualpr[\partial M]{\lambda_h^{n-k-1}}{\mu_h^k} = (\bm{\lambda}_\partial^{n-k-1})^\top \mathbf{\Psi}^k_{s, \partial} \bm{\mu}^k_\partial,
\end{equation}
where the boundary matrix $\mathbf{\Psi}^k_{s, \partial} \in \mathbb{R}^{N_{n-k-1, s}^\partial \times N_{k, s}^\partial}$ is given by $[\mathbf{\Psi}^k_{s, \partial}]_{l}^m = \dualpr[\partial M]{\psi^{n-k-1}_l}{\psi^k_m}$ and satisfies $\mathbf{\Psi}^k_{s, \partial} = (-1)^{k(n-k-1)}(\mathbf{\Psi}^{n-k-1}_{s, \partial})^\top$. Using Eq. \eqref{eq:alg_trace}, the following matrix relation is obtained
\begin{equation}
    \mathbf{L}^k_{s, \partial} = (\mathbf{T}^{n-k-1}_s)^\top \mathbf{\Psi}^k_{s, \partial} \mathbf{T}^k_s.
\end{equation}

\revTwo{
\paragraph{Discrete integration by parts}
The integration by parts formula \eqref{eq:int_byparts_d} is also valid when the differential forms and the domain are not smooth (cf. \cite[Theorem 6.3]{arnold2018finite})
\begin{equation}\label{eq:int_byparts_d_H}
    \dualpr[M]{\d{\mu}}{\lambda} + (-1)^k \dualpr[M]{\mu}{\d{\lambda}} = \dualpr[\partial M]{\mu}{\lambda}, \qquad \mu \in H^1\Omega^{k}(M), \quad \lambda \in H \Omega^{n-k-1}(M),
\end{equation}
where $H^1\Omega^{k}(M)$ is the space of $k$-forms with coefficients in $H^1(M)$. Conforming finite element $\mathcal{V}_{s,h}^k \subset H\Omega^k(M)$ do not possess the $H^1\Omega^{k}(M)$ regularity (except for the case $k=0$). However, the integration by parts holds at the discrete level when conforming finite element spaces are used
\begin{equation}\label{eq:int_byparts_d_disc}
    \dualpr[M]{\d\mu_h}{\lambda_h} + (-1)^k\dualpr[M]{\mu_h}{\d\lambda_h} = \dualpr[\partial M ]{\mu_h}{\lambda_h}, \qquad  \forall \mu_h \in \mathcal{V}_{s,h}^k, \; \forall\lambda_h \in \mathcal{V}_{s,h}^{n-k-1}.
\end{equation}
}
\revOne{
This is proven considering that the Stokes theorem holds for the each element of the mesh and it extends to the whole mesh as all terms arising on inter-cell boundaries will cancel due to the special continuity properties of discrete differential forms. A proof of this statement based on vector calculus is reported in \ref{app:proofs} for the wave and Maxwell equations in 3D. A formal proof of the integration by parts formula in an exterior calculus setting is out of the scope of this paper. Since Eq. \eqref{eq:int_byparts_d_disc} is valid $\forall \mu_h, \forall \lambda_h$, the algebraic form of the Stokes theorem is obtained
\begin{equation}\label{eq:alg_StokesTh}
(-1)^{(k+1)(n-k-1)}(\mathbf{G}^{k})^\top + (-1)^k \mathbf{G}^{n-k-1} = (\mathbf{T}^{k})^\top \bm{\Psi}_{s, \partial}^{n-k-1} \mathbf{T}^{n-k-1},
\end{equation}
where the alternating property of the duality product has been used for the first term.
}
\paragraph{Hodge star operator}
As already discussed, in this paper a discrete Hodge star operator will never be employed thanks to the dual-field formulation of the system dynamics. The following discussion will highlight the benefits of not using a discrete Hodge star which is in fact a non trivial task \cite{hiptmair2001}.

Essentially, an isomorphic discrete Hodge star operator requires dual meshes.
The explanation for the statement resides in the fact that finite element differential forms of dual type do not possess in general the same number of degrees of freedom. For example, in the Whitney forms case each $k$-order finite element has a dimension equal to the number of $k$-simplices. In general, the number of $k$-simplices will be different than the number of $n-k$ simplices. Hence it is not possible to find a bijection relating a finite element $k$-form and its $n-k$ Hodge dual. To retrieve a discrete isomorphism, dual meshes based on the Voronoi-Delaunay duality are normally used \cite{hirani2003discrete,hiptmair2001}. However,  \revTwo{in some discrete formulations} this approach result in mesh entities located outside the physical domain, making it cumbersome to implement boundary interconnections of different systems, which is a fundamental feature of port-Hamiltonian systems that we aim to preserve at the discrete level.

If one mesh is used, a non-isomorphic discrete Hodge star can be constructed relying on a weak formulation. Given $\mu_h^k \in \mathcal{V}_{s, h}^{k}$ and its dual variable $\star \mu_h^k = \dual{\mu}_h^{n-k} \in \dual{\mathcal{V}}_{s, h}^{n-k}$ a discrete Hodge can be constructed using a weak formulation of the equation 

\begin{equation}
    \inpr[M]{\dual{v}^{n-k}_h}{\dual{\mu}_h^{n-k}} = (-1)^{k(n-k)} \dualpr[M]{\dual{v}^{n-k}_h}{\mu_h^k}, \qquad \forall \dual{v}^{n-k}_h \in \dual{\mathcal{V}}_{s, h}^{n-k},
\end{equation}
or symmetrically
\begin{equation}
    \dualpr[M]{v^k_h}{\dual{\mu}_h^{n-k}} =\inpr[M]{v^k_h}{\mu_h^k}, \qquad \forall v^k_h \in \mathcal{V}_{s, h}^{k},
\end{equation}
leading to the two following algebraic realization
\begin{equation}\label{eq:alg_hodge}
    \mathbf{M}^{n-k}_s \dual{\bm{\mu}}^{n-k} = (-1)^{k(n-k)} \mathbf{L}^{k}_s {\bm{\mu}}^k, \qquad \mathbf{L}^{n-k}_s \dual{\bm{\mu}}^{n-k}  = \mathbf{M}^{k}_s \bm{\mu}^k.
\end{equation}
It can be noticed that this discrete Hodge corresponds to a projection between dual polynomial space and therefore it inevitably introduces an error. This fundamental point is the main motivation behind the dual field method. The integration of the primal and adjoint system allows reconstructing primal and dual variables without relying on a discrete Hodge star. Instead, it is embedded into the adjoint system by means of the codifferential operator.

\paragraph{The Whitney form case}
Finally, we conclude this section by considering the special case of Whitney forms i.e. lowest-degree polynomials ($s=1$). The special case of the Whitney forms allows for a clear separation of topological and metric operations: the resulting discrete Stokes-Dirac structure is completely topological, whereas the adjoint Stokes-Dirac systems embeds the metric in the discrete representation of the codifferential \cite{bochev2006}.
 
For a generic $k$-Whitney form $\mu_h \in \mathcal{V}_{1, h}^k:= \mathcal{W}_h^k$, one has 
\begin{equation}\label{eq:whitney_basis}
    \mu_h = \sum_{i=1}^{N_{k}} w_i^{k}(\xi) \mu_i, \qquad \mu_i := \int_{\sigma_k^i} \tr_{\sigma_k^i} \mu_h, 
\end{equation}
where $N_k = \# \Delta_{k}(\mathcal{T}_h), \; \forall k=0, \dots, n$ is the number of $k$-simplices in the mesh, $\mu_i\in \bbR$ corresponds to the cochain coefficient associated with the simplex $\sigma_k^i$ and the function $w_i^{k}(\xi):= \phi_{1, i}^k(\xi)$ is the $k$-th order Whitney function related to the $k$-simplex indexed by $i$ and function of the point manifold $\xi$. The particularity of the Whitney form case is the exterior derivative, since only in this case the interpolation commutes with the exterior derivative (cf. \cite[Section IV.27]{whitney1957}). The exterior derivative of a $k$-Whitney form $\mu_h \in \mathcal{W}_h^k$ can be rewritten in terms of the \revTwo{incidence} matrix $\mathbf{d}^k \in \mathbb{R}^{N_{k+1}\times N_k}$ \revTwo{(the discrete representation of the exterior derivative \cite{hiptmair2001,kreeft2011mimetic})}
\begin{equation}\label{eq:alg_d}
\begin{aligned}
    \d \mu_h = \sum_{i=1}^{N_k} \d w_i^{k}(\xi) \mu_i = \sum_{i=1}^{N_{k+1}} \sum_{j=1}^{N_{k}} w_i^{k+1}(\xi) [\mathbf{d}^{k}]_{i}^j \mu_j, \qquad \forall k = 0, \dots, n-1.
\end{aligned}
\end{equation}
This implies that the incidence matrix  explicitly appears in the inner product
\revOne{
\begin{equation}\label{eq:alg_d_inner_W}
    \mathbf{D}_1^{k} = \mathbf{M}^{k+1}_1 \mathbf{d}^{k}, \qquad \forall k=0, \dots, n-1,
\end{equation}
where $\mathbf{D}_1^{k}$ is defined in \eqref{eq:alg_d_inner}, 
and in the duality product
\begin{equation}\label{eq:alg_d_dualpr_W}
    \mathbf{G}_1^{k} = \mathbf{L}^{k+1}_1 \mathbf{d}^{k}, \qquad \forall k=0, \dots, n-1,
\end{equation}
where $\mathbf{G}_1^{k}$ is defined in \eqref{eq:alg_d_dualpr}.
}

\revTwo{\subsection{Discrete primal and dual Stokes-Dirac structure}}
\revTwo{In this section the discrete representation of the primal Stokes-Dirac structure \eqref{eq:primal_SD} and the dual Dirac structure \eqref{eq:dual_SD} are presented. Each of the two will verify a power balance involving inner products. By combining primal and dual variables a discrete Stokes theorem involving only duality products is obtained. This means that proposed discretization mimics the power balance in Eq. \eqref{eq:powbal}  at the discrete level.

\subsubsection{Discrete primal Stokes-Dirac structure}
The discrete version of the primal  Stokes-Dirac structure \eqref{eq:primal_SD} is obtained by replacing the flows and efforts with their discrete counterparts:\\
\begin{equation}\label{eq:discr_primal_SD}
    \begin{aligned}
    \inpr[M]{\dual{v}^p_h}{\dual{f}^p_{1, h}} &= \inpr[M]{\dual{v}^p_h}{(-1)^r\d \dual{e}^{p-1}_{2, h}}, \\
      \inpr[M]{\dual{v}^{p-1}_h}{\dual{f}^{p-1}_{2, h}} &= (-1)^{a_1} \inpr[M]{\d\dual{v}^{p-1}_h}{ \dual{e}^p_{1, h}} - (-1)^{a_1} \dualpr[\partial M]{\dual{v}^{p-1}_h}{f_{\partial, h}^{q-1}}, \\
      \dual{e}_{\partial, h}^{p-1} &= (-1)^p \tr \dual{e}_{2, h}^{p-1} \in \tr \dual{\mathcal{V}}_{s, h}^{p-1}.
    \end{aligned} \qquad
    \begin{aligned}
    &\forall \dual{v}^p_h \in \dual{\mathcal{V}}_{s, h}^p, \\
    &\forall \dual{v}^{p-1}_h \in \dual{\mathcal{V}}_{s, h}^{p-1},\\
    &
    \end{aligned}
\end{equation}
The boundary flow $f_{\partial, h}^{q-1}$ is obtained by considering the trace of the associated polynomial family $f_{\partial, h}^{q-1}~\in~\tr \mathcal{V}_{s, h}^{q-1}$. 
\begin{proposition}\label{pr:discr_innerpowbal_primal}
The system \eqref{eq:discr_primal_SD} verifies the following power balance 
\begin{equation}\label{eq:discr_powbal_primal} 
(-1)^{p(n-p)}\inpr[M]{\dual{e}^p_{1, h}}{\dual{f}^p_{1, h}} + (-1)^{q(n-q)} \inpr[M]{\dual{e}^{p-1}_{2, h}}{\dual{f}^{p-1}_{2, h}} + \dualpr[\partial M]{\dual{e}^{p-1}_{\partial, h}}{f_{\partial, h}^{q-1}} = 0.
\end{equation}
\end{proposition}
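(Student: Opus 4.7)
The plan is to test the two weak equations in the discrete primal Stokes-Dirac structure \eqref{eq:discr_primal_SD} with specific discrete efforts and combine them with the sign factors that appear in the claimed balance. Concretely, I would set $\dual{v}^p_h = \dual{e}^p_{1,h}$ in the first line and $\dual{v}^{p-1}_h = \dual{e}^{p-1}_{2,h}$ in the second line, then multiply the first identity by $(-1)^{p(n-p)}$ and the second by $(-1)^{q(n-q)}$ and add. All three terms of \eqref{eq:discr_powbal_primal} then appear on the left-hand side, and the problem reduces to showing that the volume terms arising from the right-hand sides cancel, leaving a boundary term equal in magnitude and opposite in sign to $\dualpr[\partial M]{\dual{e}^{p-1}_{\partial, h}}{f_{\partial, h}^{q-1}}$.

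The volume cancellation relies only on the symmetry of the $L^2$ inner product: after substitution, one gets $(-1)^{p(n-p)+r}\inpr[M]{\dual{e}^p_{1,h}}{\d \dual{e}^{p-1}_{2,h}}$ and $(-1)^{q(n-q)+a_1}\inpr[M]{\d\dual{e}^{p-1}_{2,h}}{\dual{e}^p_{1,h}}$. Using Proposition \ref{pr:parity_a0_a1}, the exponent $q(n-q)+a_1$ is congruent to $1+r+p(n-p) \pmod 2$, i.e.\ it has opposite parity to $p(n-p)+r$, so the two inner products cancel. No discrete integration by parts and no reference to $a_0$ is needed at this step; only the $a_1$ identity of Proposition \ref{pr:parity_a0_a1} intervenes.

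What remains is a purely boundary computation. The residual term is $-(-1)^{q(n-q)+a_1}\dualpr[\partial M]{\dual{e}^{p-1}_{2,h}}{f_{\partial, h}^{q-1}}$, and using $r=pq+1$ together with the Stokes-Dirac dimensional constraint $p+q=n+1$, one verifies by a direct parity check that $q(n-q)+a_1 \equiv 1+p \pmod 2$, so $(-1)^{q(n-q)+a_1}=(-1)^{1+p}$. Combined with the definition $\dual{e}^{p-1}_{\partial, h}=(-1)^p\tr \dual{e}^{p-1}_{2,h}$ from the third line of \eqref{eq:discr_primal_SD}, this produces exactly the missing $+\dualpr[\partial M]{\dual{e}^{p-1}_{\partial, h}}{f_{\partial, h}^{q-1}}$ contribution with the correct sign, closing the identity.

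The main obstacle is purely bookkeeping, namely reconciling the four sign conventions $r$, $a_1$, $(-1)^{p(n-p)}$, and $(-1)^{q(n-q)}$ while consistently using $p+q=n+1$. Because Proposition \ref{pr:parity_a0_a1} has already absorbed the worst of this into a clean modular identity, the proof itself should be short: two substitutions, one symmetry of the inner product, and two parity checks. No genuinely new analytic ingredient (no discrete Stokes theorem, no commuting projector, no Hodge-star manipulation) is required at this stage, which is precisely the payoff of having written the primal structure in the form \eqref{eq:discr_primal_SD}.
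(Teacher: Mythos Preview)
Your approach is essentially identical to the paper's: substitute the efforts as test functions, weight the two identities by $(-1)^{p(n-p)}$ and $(-1)^{q(n-q)}$, invoke Proposition~\ref{pr:parity_a0_a1} to cancel the volume terms, and identify the boundary remainder via $\dual{e}^{p-1}_{\partial,h}=(-1)^p\tr\dual{e}^{p-1}_{2,h}$. One bookkeeping slip: the parity check actually yields $q(n-q)+a_1\equiv p\pmod 2$, not $1+p$ (from $a_1\equiv 1+r+p(n-p)+q(n-q)$ you get $q(n-q)+a_1\equiv 1+r+p(n-p)=2+2pq-p\equiv p$), so the residual boundary term is $-(-1)^p\dualpr[\partial M]{\dual{e}^{p-1}_{2,h}}{f^{q-1}_{\partial,h}}=-\dualpr[\partial M]{\dual{e}^{p-1}_{\partial,h}}{f^{q-1}_{\partial,h}}$ on the right-hand side, which after moving across gives the $+$ sign in \eqref{eq:discr_powbal_primal} as required.
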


\begin{proof}
Choosing $\dual{v}^p_h = \dual{e}^p_{1, h}, \; \dual{v}^{p-1}_h = \dual{e}^{p-1}_{2, h}$ leads to 
\begin{align}
      \inpr[M]{\dual{e}^p_{1, h}}{\dual{f}^p_{1, h}} &= (-1)^r \inpr[M]{\dual{e}^p_{1, h}}{\d \dual{e}^{p-1}_{2, h}}, \label{eq:primal_1}\\
      \inpr[M]{\dual{e}^{p-1}_{2, h}}{\dual{f}^{p-1}_{2, h}} &=(-1)^{a_1} \inpr[M]{\d \dual{e}^{p-1}_{2, h}}{\dual{e}^p_{1, h}} - (-1)^{a_1}\dualpr[\partial M]{\dual{e}_{2, h}^{p-1}}{f_{\partial, h}^{q-1}}. \label{eq:primal_2}
\end{align}
From Proposition \ref{pr:parity_a0_a1} it holds $(-1)^{a_1} = (-1)^{1+r+p(n-p)+q(n-q)}$. So, summing up Eqs. \eqref{eq:primal_1} and \eqref{eq:primal_2} with the appropriate coefficients, leads to
\begin{equation}
\begin{aligned}
    (-1)^{p(n-p)} \inpr[M]{\dual{e}^p_{1, h}}{\dual{f}^p_{1, h}} + (-1)^{q(n-q)} \inpr[M]{\dual{e}^{p-1}_{2, h}}{\dual{f}^{p-1}_{2, h}} &= (-1)^{r+p(n-p)}\dualpr[\partial M]{\dual{e}_{2, h}^{p-1}}{f^{q-1}_{\partial, h}}, \\
    &= -(-1)^{p}\dualpr[\partial M]{\dual{e}_{2, h}^{p-1}}{f^{q-1}_{\partial, h}}, \\
    &= -\dualpr[\partial M]{\dual{e}_{\partial, h}^{p-1}}{f^{q-1}_{\partial, h}}.
\end{aligned}
\end{equation}
This proves Eq. \eqref{eq:discr_powbal_primal}.
\end{proof}

Using the algebraic realization  of the inner product \eqref{eq:alg_inner}, the exterior derivative \eqref{eq:alg_d_inner} and the trace \eqref{eq:alg_trace}, the algebraic realization of the discrete primal Stokes-Dirac structure \eqref{eq:discr_primal_SD} is expressed as

\begin{equation}\label{eq:alg_primal}
\begin{aligned}
    \begin{bmatrix}
        \mathbf{M}^p_s & \mathbf{0} \\
        \mathbf{0} & \mathbf{M}^{p-1}_s
    \end{bmatrix}
    \begin{pmatrix}
    \dual{\mathbf{f}}^p_1 \\
    \dual{\mathbf{f}}^{p-1}_2
    \end{pmatrix} &=
    \begin{bmatrix}
        \mathbf{0} & (-1)^r\mathbf{D}^{p-1}_s \\
        (-1)^{a_1}(\mathbf{D}^{p-1}_s)^\top & \mathbf{0}
    \end{bmatrix}
    \begin{pmatrix}
    \dual{\mathbf{e}}^p_1 \\
    \dual{\mathbf{e}}^{p-1}_2 \\
    \end{pmatrix} - \begin{bmatrix}
        \mathbf{0} \\
        (-1)^{a_1} \mathbf{B}^{q-1}_{s}
    \end{bmatrix} \mathbf{f}^{q-1}_\partial, \\
    \dual{\mathbf{e}}^{p-1}_\partial 
    &= 
    \begin{bmatrix}
    \mathbf{0} & (-1)^p\mathbf{T}^{p-1}_s \\
    \end{bmatrix}
    \begin{pmatrix}
    \dual{\mathbf{e}}^p_1 \\
    \dual{\mathbf{e}}^{p-1}_2 \\
    \end{pmatrix}, 
\end{aligned}
\end{equation}
where $\mathbf{B}^{q-1}_{s}$ corresponds to a boundary control matrix, defined by
\begin{equation}\label{eq:control_mat}
    \mathbf{B}^{k}_s := (\mathbf{T}^{n-k-1}_s)^\top \mathbf{\Psi}^{k}_{s, \partial}, \qquad [\mathbf{B}^{k}_s]_{i}^j = \dualpr[\partial M]{\phi_{s, i}^{n-k-1}}{\psi^{k}_{s, j}}.
\end{equation}

\subsubsection{Discrete dual Stokes-Dirac structure}
The discrete counterpart of the dual Stokes-Dirac structure \eqref{eq:dual_SD} is given by
\begin{equation}\label{eq:discr_dual_SD}
    \begin{aligned}
    \inpr[M]{{v}^{q-1}_h}{{f}^{q-1}_{1, h}} &= -\inpr[M]{\d{v}^{q-1}_h}{{e}_{2, h}^q} - (-1)^{(p-1)(q-1)} \dualpr[\partial M]{{v}^{q-1}_h}{\dual{e}_{\partial, h}^{p-1}}, \\
    \inpr[M]{v^q_h}{f^q_{2, h}} &= \inpr[M]{v^q_h}{\d e^{q-1}_{1, h}}, \\
    f_{\partial, h}^{q-1} &= \tr e_{1, h}^{q-1} \in \tr \mathcal{V}_{s, h}^{q-1}.
    \end{aligned} \qquad
    \begin{aligned}
    &\forall {v}^{q-1}_h \in \mathcal{V}_{s, h}^{q-1}, \\
    &\forall v^q_h \in \mathcal{V}_{s, h}^{q}, \\
    &
    \end{aligned}
\end{equation}
In a reciprocal way with respect to the primal system, the boundary flow $\dual{e}_{\partial, h}^{p-1}$ belongs to the trace of the $p-1$ polynomial space, i.e. $\dual{e}_{\partial, h}^{p-1} \in \tr \mathcal{\dual{V}}_{s, h}^{p-1}$.
\begin{proposition}\label{pr:discr_innerpowbal_dual}
The  system \eqref{eq:discr_dual_SD} verifies the following power balance identity 
\begin{equation}\label{eq:discr_powbal_dual}
\inpr[M]{{e}^{q-1}_{1, h}}{f^{q-1}_{1, h}} + \inpr[M]{e^{q}_{2, h}}{{f}^q_{2, h}} + \dualpr[\partial M]{\dual{e}^{p-1}_{\partial, h}}{f_{\partial, h}^{q-1}} = 0. 
\end{equation}
\end{proposition}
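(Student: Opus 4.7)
The plan is to mirror the strategy used for the primal case in Proposition \ref{pr:discr_innerpowbal_primal}: select the test forms in the weak dual Stokes-Dirac structure \eqref{eq:discr_dual_SD} to coincide with the effort variables, add the two resulting scalar identities, and exploit the cancellation of the interior bilinear contributions. The surviving boundary contribution, once cast into the correct dual pairing, will yield the claimed power balance.

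Concretely, first I would substitute $v^{q-1}_h = e^{q-1}_{1,h}$ in the first line of \eqref{eq:discr_dual_SD} and $v^q_h = e^q_{2,h}$ in the second line. This produces
\begin{equation*}
\inpr[M]{e^{q-1}_{1,h}}{f^{q-1}_{1,h}} = -\inpr[M]{\d e^{q-1}_{1,h}}{e^q_{2,h}} - (-1)^{(p-1)(q-1)} \dualpr[\partial M]{e^{q-1}_{1,h}}{\dual{e}^{p-1}_{\partial,h}},
\end{equation*}
and
\begin{equation*}
\inpr[M]{e^q_{2,h}}{f^q_{2,h}} = \inpr[M]{e^q_{2,h}}{\d e^{q-1}_{1,h}} = \inpr[M]{\d e^{q-1}_{1,h}}{e^q_{2,h}},
\end{equation*}
where symmetry of the $L^2$ inner product has been used. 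Summing the two equations causes the interior term $\inpr[M]{\d e^{q-1}_{1,h}}{e^q_{2,h}}$ to cancel, leaving only the boundary duality pairing on the right hand side.

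The second step is to identify this boundary term with the one appearing in \eqref{eq:discr_powbal_dual}. Using the definition of the boundary duality product and the skew-symmetry of the wedge product \eqref{eq:skew_wedge}, and recalling that on the $(n-1)$-dimensional boundary one has $(p-1)+(q-1)=n-1$ since $p+q=n+1$,
\begin{equation*}
\dualpr[\partial M]{e^{q-1}_{1,h}}{\dual{e}^{p-1}_{\partial,h}} = (-1)^{(p-1)(q-1)} \dualpr[\partial M]{\dual{e}^{p-1}_{\partial,h}}{e^{q-1}_{1,h}} = (-1)^{(p-1)(q-1)} \dualpr[\partial M]{\dual{e}^{p-1}_{\partial,h}}{f^{q-1}_{\partial,h}},
\end{equation*}
the last equality following from $f^{q-1}_{\partial,h} := \tr e^{q-1}_{1,h}$ and the fact that the boundary pairing only depends on traces. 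Substituting back, the two factors of $(-1)^{(p-1)(q-1)}$ combine to $+1$, yielding exactly $-\dualpr[\partial M]{\dual{e}^{p-1}_{\partial,h}}{f^{q-1}_{\partial,h}}$ on the right, which after rearrangement is \eqref{eq:discr_powbal_dual}.

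There is no genuine difficulty in this proof; the only point requiring care is the sign bookkeeping when transporting the boundary duality product across its two arguments via skew-symmetry. Since the grading $(p-1)(q-1)$ appears both in the weak formulation and in the swap, the factors compensate and the final identity emerges with the desired sign, completely analogous to what happens in Proposition \ref{pr:discr_innerpowbal_primal}.
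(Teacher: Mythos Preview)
Your proposal is correct and follows essentially the same approach as the paper: substitute the effort variables for the test forms in \eqref{eq:discr_dual_SD}, sum so that the interior $\inpr[M]{\d e^{q-1}_{1,h}}{e^q_{2,h}}$ term cancels, and then swap the arguments of the boundary duality pairing via skew-symmetry of the wedge product to absorb the $(-1)^{(p-1)(q-1)}$ factor. Your explicit remark that $(p-1)+(q-1)=n-1$ on $\partial M$ and that the two sign factors compensate is slightly more detailed than the paper's version, but the argument is identical in substance.
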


\begin{proof}
Choosing ${v}^{q-1}_h = e^{q-1}_{1, h}, \; {v}^q_h = {e}^q_{2, h}$, leads to 
\begin{align}
      \inpr[M]{e^{q-1}_{1, h}}{{f}^{q-1}_{1, h}} &= -\inpr[M]{\d e^{q-1}_{1, h}}{{e}_{2, h}^q} - (-1)^{(p-1)(q-1)} \dualpr[\partial M]{e^{q-1}_{1, h}}{\dual{e}_{\partial, h}^{p-1}}, \label{eq:dual_1}\\
      \inpr[M]{{e}^q_{2, h}}{f^q_{2, h}} &= \inpr[M]{{e}^q_{2, h}}{\d e^{q-1}_{1, h}}.
      \label{eq:dual_2}\\
\end{align}
Summing Eqs. \eqref{eq:dual_1} and \eqref{eq:dual_2}, it is found 
\begin{equation}
\begin{aligned}
    \inpr[M]{e^{q-1}_{1, h}}{{f}^{q-1}_{1, h}} + \inpr[M]{{e}^q_{2, h}}{f^q_{2, h}} &= -(-1)^{(p-1)(q-1)} \dualpr[\partial M]{e^{q-1}_{1, h}}{\dual{e}_{\partial, h}^{p-1}}, \\
    &= -\dualpr[\partial M]{\dual{e}_{\partial, h}^{p-1}}{e^{q-1}_{1, h}}, \\ 
    &= -\dualpr[\partial M]{\dual{e}_{\partial, h}^{p-1}}{f_{\partial, h}^{q-1}}.
\end{aligned}
\end{equation}
\end{proof}

The algebraic realization of the discrete Stokes-Dirac structure \eqref{eq:discr_dual_SD} is expressed as

\begin{equation}\label{eq:alg_dual}
\begin{aligned}
    \begin{bmatrix}
        \mathbf{M}^{q-1}_s & \mathbf{0} \\
        \mathbf{0} & \mathbf{M}^q_s
    \end{bmatrix}
    \begin{pmatrix}
    \mathbf{f}^{q-1}_1 \\
    \mathbf{f}^q_2
    \end{pmatrix} &=
    \begin{bmatrix}
        \mathbf{0} & -(\mathbf{D}_s^{q-1})^\top \\
        \mathbf{D}^{q-1}_s & \mathbf{0}
    \end{bmatrix}
    \begin{pmatrix}
    \mathbf{e}^{q-1}_1 \\
    \mathbf{e}^q_2
    \end{pmatrix} - 
    \begin{bmatrix}
        (-1)^{(p-1)(q-1)}\mathbf{B}_s^{p-1}\\
        \mathbf{0}
    \end{bmatrix}\dual{\mathbf{e}}^{p-1}_\partial, \\
    \mathbf{f}_\partial^{q-1} &= \begin{bmatrix}
        \mathbf{T}_s^{q-1} & \mathbf{0}
    \end{bmatrix}\begin{pmatrix}
    \mathbf{e}^{q-1}_1 \\
    \mathbf{e}^q_2
    \end{pmatrix}.
\end{aligned}
\end{equation}

\remark{The Whitney forms case}

If the Whitney forms are considered, the algebraic system \eqref{eq:alg_primal} can be rewritten using \eqref{eq:alg_d_inner_W} as
\begin{equation}\label{eq:alg_primal_W}
\begin{aligned}
    \begin{bmatrix}
        \mathbf{M}^p_s & \mathbf{0} \\
        \mathbf{0} & \mathbf{M}^{p-1}_s
    \end{bmatrix}
    \begin{pmatrix}
    \dual{\mathbf{f}}^p_1 \\
    \dual{\mathbf{f}}^{p-1}_2
    \end{pmatrix} &=
    \begin{bmatrix}
        \mathbf{0} & (-1)^r\mathbf{M}^p_s\mathbf{d}^{p-1}_s \\
        (-1)^{a_1}(\mathbf{d}^{p-1}_s)^\top \mathbf{M}^p_s & \mathbf{0}
    \end{bmatrix}
    \begin{pmatrix}
    \dual{\mathbf{e}}^p_1 \\
    \dual{\mathbf{e}}^{p-1}_2 \\
    \end{pmatrix} - \begin{bmatrix}
        \mathbf{0} \\
        (-1)^{a_1} \mathbf{B}^{q-1}_{s}
    \end{bmatrix} \mathbf{f}^{q-1}_\partial, \\
    \dual{\mathbf{e}}^{p-1}_\partial 
    &= 
    \begin{bmatrix}
    \mathbf{0} & (-1)^p\mathbf{T}^{p-1}_s \\
    \end{bmatrix}
    \begin{pmatrix}
    \dual{\mathbf{e}}^p_1 \\
    \dual{\mathbf{e}}^{p-1}_2 \\
    \end{pmatrix},
\end{aligned}
\end{equation}
whereas the algebraic system \eqref{eq:discr_dual_SD} becomes
\begin{equation}\label{eq:alg_dual_W}
\begin{aligned}
    \begin{bmatrix}
        \mathbf{M}^{q-1}_s & \mathbf{0} \\
        \mathbf{0} & \mathbf{M}^q_s
    \end{bmatrix}
    \begin{pmatrix}
    \mathbf{f}^{q-1}_1 \\
    \mathbf{f}^q_2
    \end{pmatrix} &=
    \begin{bmatrix}
        \mathbf{0} & -(\mathbf{d}_s^{q-1})^\top \mathbf{M}^q_s\\
        \mathbf{M}^q_s \mathbf{d}^{q-1}_s & \mathbf{0}
    \end{bmatrix}
    \begin{pmatrix}
    \mathbf{e}^{q-1}_1 \\
    \mathbf{e}^q_2
    \end{pmatrix} - 
    \begin{bmatrix}
        (-1)^{(p-1)(q-1)}\mathbf{B}_s^{p-1}\\
        \mathbf{0}
    \end{bmatrix}\dual{\mathbf{e}}^{p-1}_\partial, \\
    \mathbf{f}_\partial^{q-1} &= \begin{bmatrix}
        \mathbf{T}_s^{q-1} & \mathbf{0}
    \end{bmatrix}\begin{pmatrix}
    \mathbf{e}^{q-1}_1 \\
    \mathbf{e}^q_2
    \end{pmatrix}.
\end{aligned}
\end{equation}
The Whitney forms are of particular interest as they unveil the metrical and topological structure of the equations. Notice that the first equation of \eqref{eq:alg_primal_W} and the second equation of \eqref{eq:alg_dual_W} are purely topological, as they mass matrix can be factored out.

\subsubsection{Combining the primal and dual system: discrete power balance based on duality products}
The first equation in \eqref{eq:discr_primal_SD} and the second equation in \eqref{eq:discr_dual_SD} are a mere projection of the canonical Stokes-Dirac structure \eqref{eq:StokesDirac}, i.e. the differential operator is enforced strongly. This means that by combining the primal and dual system a mimetic representation of the power balance in Eq. \eqref{eq:powbal}, which is based on duality products. This relies on the fact that finite element differential forms constitute a subcomplex of the de Rham complex.

\begin{proposition}\label{pr:discr_powbal}
The following discrete power balance, that combines variables from the primal and dual Stokes-Dirac structures holds
\begin{equation}\label{eq:discr_powbal}
    \dualpr[M]{e^{q-1}_{1, h}}{\dual{f}^p_{1, h}} + \dualpr[M]{\dual{e}^{p-1}_{2, h}}{f^q_{2, h}} + \dualpr[\partial M]{\dual{e}_{\partial, h}^{p-1}}{f_{\partial, h}^{q-1}} = 0.
\end{equation}
\end{proposition}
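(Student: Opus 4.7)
The strategy is to exploit that the first line of the discrete primal Stokes-Dirac structure \eqref{eq:discr_primal_SD} and the second line of the discrete dual Stokes-Dirac structure \eqref{eq:discr_dual_SD} are in fact enforced strongly. Indeed, the polynomial spaces form a subcomplex of the de Rham complex, so $\d \dual{e}^{p-1}_{2,h} \in \dual{\mathcal{V}}_{s,h}^{p}$ and $\d e^{q-1}_{1,h} \in \mathcal{V}_{s,h}^{q}$, and testing against the full trial spaces forces the pointwise identities
\begin{equation*}
\dual{f}^p_{1,h} = (-1)^r \d \dual{e}^{p-1}_{2,h}, \qquad f^q_{2,h} = \d e^{q-1}_{1,h}.
\end{equation*}
Substituting these into the two bulk duality products appearing in \eqref{eq:discr_powbal} rewrites that portion of the sum as $(-1)^r \dualpr[M]{e^{q-1}_{1,h}}{\d\dual{e}^{p-1}_{2,h}} + \dualpr[M]{\dual{e}^{p-1}_{2,h}}{\d e^{q-1}_{1,h}}$.

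Next, I would apply the discrete integration-by-parts formula \eqref{eq:int_byparts_d_disc} with $\mu=\dual{e}^{p-1}_{2,h}$ and $\lambda=e^{q-1}_{1,h}$ (the complementarity $q-1=n-p$ and the conformity of the finite element spaces make this legal), together with the wedge skew-symmetry \eqref{eq:skew_wedge} in the form $\dualpr[M]{\d\dual{e}^{p-1}_{2,h}}{e^{q-1}_{1,h}} = (-1)^{p(n-p)}\dualpr[M]{e^{q-1}_{1,h}}{\d\dual{e}^{p-1}_{2,h}}$. This allows the term carrying $\d\dual{e}^{p-1}_{2,h}$ to be eliminated in favour of the other one, producing a boundary duality product in the process.

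The main obstacle is sign bookkeeping. After the elimination, the bulk contribution collapses to $\{1 - (-1)^{r+p(n-p)+p-1}\}\dualpr[M]{\dual{e}^{p-1}_{2,h}}{\d e^{q-1}_{1,h}}$. Using $r = pq+1$ and the relation $p+q = n+1$, a short calculation gives $r + p(n-p) + p - 1 = p(q+n-p+1) = 2pq$, which is even; hence the bulk cancels exactly. What survives is the boundary residual $(-1)^{r+p(n-p)}\dualpr[\partial M]{\dual{e}^{p-1}_{2,h}}{e^{q-1}_{1,h}}$, whose prefactor has parity $r+p(n-p) = 2pq+1-p \equiv p+1 \pmod 2$. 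Finally, invoking the boundary definitions $\dual{e}^{p-1}_{\partial,h} = (-1)^p\tr\dual{e}^{p-1}_{2,h}$ and $f^{q-1}_{\partial,h} = \tr e^{q-1}_{1,h}$ (see \eqref{eq:discr_primal_SD}--\eqref{eq:discr_dual_SD}), and using $(-1)^{p+1}\cdot(-1)^p = -1$, this residual identifies with $-\dualpr[\partial M]{\dual{e}^{p-1}_{\partial,h}}{f^{q-1}_{\partial,h}}$, yielding precisely \eqref{eq:discr_powbal}.
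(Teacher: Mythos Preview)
Your proof is correct and follows essentially the same route as the paper: extract the strong identities $\dual{f}^p_{1,h}=(-1)^r\d\dual{e}^{p-1}_{2,h}$ and $f^q_{2,h}=\d e^{q-1}_{1,h}$ from the subcomplex property, insert them into the two bulk duality products, and then apply the discrete integration-by-parts formula \eqref{eq:int_byparts_d_disc} together with the boundary definitions of $\dual{e}^{p-1}_{\partial,h}$ and $f^{q-1}_{\partial,h}$. The only difference is presentational: you carry out the sign bookkeeping explicitly (verifying that the bulk residual vanishes because $r+p(n-p)+p-1=2pq$ is even, and that the surviving boundary prefactor $(-1)^{p+1}$ combines with $(-1)^p$ to give $-1$), whereas the paper simply invokes the discrete Stokes theorem and leaves those parity checks to the reader.
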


\begin{proof}
Consider the first equation in \eqref{eq:discr_primal_SD} and the second equation in \eqref{eq:discr_dual_SD}
\begin{equation}
    \begin{aligned}
    \inpr[M]{\dual{v}^p_h}{\dual{f}^p_{1, h}} &= \inpr[M]{\dual{v}^p_h}{(-1)^r\d \dual{e}^{p-1}_{2, h}}, \\
        \inpr[M]{v^q_h}{f^q_{2, h}} &= \inpr[M]{v^q_h}{\d e^{q-1}_{1, h}},
    \end{aligned} \qquad
    \begin{aligned}
    &\forall \dual{v}^{p}_h \in \dual{\mathcal{V}}_{s, h}^p, \\
    &\forall v^q_h \in \mathcal{V}_{s, h}^q. \\
    \end{aligned}
\end{equation}
\revOne{The trimmed polynomial family forms a subcomplex of the de Rham complex \cite[Lemma 3.8]{arnold2006acta}. This assures that $\d \dual{e}_{2, h}^{p-1} \in \mathcal{V}_{s, h}^p$ and $\d e^{q-1}_{1, h} \in \mathcal{V}_{s, h}^q$.} Since the inner product is bilinear, it holds
\begin{equation}
\begin{aligned}
    \inpr[M]{\dual{v}^p_h}{\dual{f}^p_{1, h} - (-1)^r\d \dual{e}^{p-1}_{2, h}} &= 0, \\
    \inpr[M]{v^q_h}{f^q_{2, h} - \d e^{q-1}_{1, h}} &= 0, 
\end{aligned} \qquad
    \begin{aligned}
    \forall \dual{v}^p_h \in \dual{\mathcal{V}}_{s, h}^p, \\
    \forall v^q_h \in \mathcal{V}_{s, h}^q. \\
    \end{aligned}
\end{equation}
Since the inner product is non-degenerate, one has 
\begin{equation*}
    \dual{f}^p_{1, h} = (-1)^r \d \dual{e}^{p-1}_{2, h}, \qquad f^q_{2, h} = \d e^{q-1}_{1, h}.
\end{equation*}
    Taking the duality product with $e^{q-1}_{1, h}$ and $\dual{e}^{p-1}_{2, h}$ and the two equations are summed, one obtains
    \begin{equation*}
        \dualpr[M]{e^{q-1}_{1, h}}{\dual{f}^p_{1, h}} + \dualpr[M]{\dual{e}^{p-1}_{2, h}}{f^q_{2, h}} = \dualpr[M]{e^{q-1}_{1, h}}{(-1)^r \d \dual{e}^{p-1}_{2, h}} + \dualpr[M]{\dual{e}^{p-1}_{2, h}}{\d e^{q-1}_{1, h}}.
    \end{equation*}
    The discrete Stokes theorem reported in Eq. \eqref{eq:int_byparts_d_disc} then gives
    \begin{equation*}
        \dualpr[M]{e^{q-1}_{1, h}}{(-1)^r \d \dual{e}^{p-1}_{2, h}} + \dualpr[M]{\dual{e}^{p-1}_{2, h}}{\d e^{q-1}_{1, h}} = - \dualpr[\partial M]{\dual{e}_{\partial, h}^{p-1}}{f_{\partial, h}^{q-1}},
    \end{equation*}
and the statement is proven.
\end{proof}

Proposition \ref{pr:discr_powbal} implies the following identity
\begin{equation*}
    (-1)^r(\dual{\mathbf{e}}^{p-1}_2)^\top(\mathbf{G}^{p-1}_s)^\top \mathbf{e}^{q-1}_1 + (\dual{\mathbf{e}}^{p-1}_2)^\top\mathbf{G}^{q-1}_s \mathbf{e}^{q-1}_1+ (-1)^p (\dual{\mathbf{e}}^{p-1}_2)^\top(\mathbf{T}_s^{p-1})^\top \mathbf{\Psi}^{q-1}_{s, \partial} \mathbf{T}_s^{q-1}\mathbf{e}^{q-1}_1 = 0.
\end{equation*}
Given the fact that this holds $\forall \mathbf{e}^{q-1}_1, \; \forall \dual{\mathbf{e}}^{p-1}_2$, the following matrix identity is obtained.
\begin{equation}\label{eq:alg_StokesTh_pH}
    (-1)^r(\mathbf{G}^{p-1}_s)^\top + \mathbf{G}^{q-1}_s + (-1)^p (\mathbf{T}_s^{p-1})^\top \mathbf{\Psi}^{q-1}_{s, \partial} \mathbf{T}_s^{q-1} = 0,
\end{equation}
This identity corresponds to the algebraic Stokes theorem \revOne{reported in Eq. \eqref{eq:alg_StokesTh}}.} In the Whitney forms case, the algebraic Stokes theorem in Eq. \eqref{eq:alg_StokesTh} reads
\begin{equation}\label{eq:alg_StokesTh_W}
    (-1)^{r}(\mathbf{L}^{p}_1 \mathbf{d}^{p-1})^\top + \mathbf{L}^q_1 \mathbf{d}^{q-1} + (-1)^p (\mathbf{T}^{p-1}_1)^\top \mathbf{\Psi}^{q-1}_{1, \partial} \mathbf{T}^{q-1}_1 = 0.
\end{equation}

\begin{remark}[Connection with the construction of \cite{kotyczka2018weak}]
The relation \eqref{eq:alg_StokesTh_W} corresponds to the one reported in Proposition 4 in \cite{kotyczka2018weak}. By factorizing the $\mathbf{L}$ matrices as 
$$\mathbf{L}^p_1 = (\mathbf{P}_e^{p})^\top \mathbf{P}_f^p, \qquad  \mathbf{L}^q_1 = (\mathbf{P}_e^{q})^\top \mathbf{P}_f^q, \qquad  \mathbf{L}^{q-1}_{1, \partial} = (\mathbf{T}^{p-1}_1)^\top \mathbf{S}^{q-1}_1, $$ 
where $\mathbf{S}^{q-1}_1:=\mathbf{\Psi}^{q-1}_{1, \partial} \mathbf{T}^{q-1}_1$, one can obtain an image representation of the Dirac structure in the minimal bond space (Proposition 5 in \cite{kotyczka2018weak}). From the image representation an explicit port-Hamiltonian system can be constructed. The Hodge operator is subsequently built using the projection matrix obtained from the same factorization, which allows representing the constitutive equation. Here we stick to a FEEC construction, very much in the same spirit as \cite{cardoso2020pfem}, and by the simultaneous employment of primal and adjoint system the need of using a discrete Hodge is avoided altogether.
\end{remark}

\revTwo{
\subsection{Discrete port-Hamiltonian systems}
In this section the discrete representation of the primal and dual port-Hamiltonian systems are detailed. \\

Given a matrix $\mathbf{A} \in \bbR^{n_R \times n_C}$ with $n_R$ rows and $n_C$ columns, the notation $[\mathbf{A}]_{R}^C$ (with $R = \{r_1, \dots, r_{\# R}\}, \; C = \{c_1, \dots, c_{\# C}\}$ index sets) indicates the matrix containing the rows and the columns of matrix $\mathbf{A}$ associated with the index set $R$  and $C$ respectively. Given a vector $\mathbf{x} \in \bbR^{n_R}$ the notation $[\mathbf{x}]_{R}$ indicates the vector containing the rows of vector $\mathbf{x}$ associated with the index set $R$ only. In particular in the following sections, let us denote with a slight abuse of notation the index sets corresponding to the  degrees of freedom of the interior of te domain and the subpartitions ${\Gamma}_1$ and ${\Gamma}_2$ respectively by $I,\, \Gamma_1,\, \Gamma_2$  (cf. Fig. \ref{fig:notation_dofs}). With this notation, the degrees of freedom for the inputs defined in Eq. \eqref{eq:u} are expressed as follows
\begin{equation}\label{eq:dofs_u}
    \mathbf{u}^{q-1}_1 = [\mathbf{T}^{q-1}_{s}]_{\Gamma_1} \mathbf{e}^{q-1}_1 = [\mathbf{e}^{q-1}_1]_{\Gamma_1}, \qquad
    \dual{\mathbf{u}}^{p-1}_2 = (-1)^p[\mathbf{T}^{p-1}_{s}]_{\Gamma_2}\dual{\mathbf{e}}^{p-1}_2 = (-1)^p[\dual{\mathbf{e}}^{p-1}_2]_{\Gamma_2}.
\end{equation}
The degrees of freedom of the outputs, defined in \eqref{eq:y}, are given by
\begin{align}
    \mathbf{y}^{q-1}_1  &=  [\mathbf{T}^{q-1}_{s}]_{\Gamma_2} \mathbf{e}^{q-1}_1 = [\mathbf{e}^{q-1}_1]_{\Gamma_2}, \label{eq:dofs_y1}\\
    \dual{\mathbf{y}}^{p-1}_2 &= (-1)^p [\mathbf{T}^{p-1}_{s}]_{\Gamma_1} \dual{\mathbf{e}}^{p-1}_2 = [\dual{\mathbf{e}}^{p-1}_2]_{\Gamma_1}. \label{eq:dofs_y2}
\end{align}

\begin{remark}
The index set $\Gamma_1$ for variables $\mathbf{e}^{q-1}_1$ is different from the same index set for variable $\dual{\mathbf{e}}^{p-1}_2$. For sake of notational lightness, we stick to this slightly abusive notation. Concerning the actual implementation, it is crucial that the index set associated for each of the boundary variables satisfy $\Gamma_1 \cap \Gamma_2 = \emptyset$. This requires that the geometrical entities at the intersection of the two are attributed to one or the other in an exclusive  
\end{remark}

\begin{figure}[tbh]%
\centering
\subfloat[][Physical domain and boundary splitting.]{%
	\label{fig:domain}%
	\includegraphics[width=0.46\columnwidth]{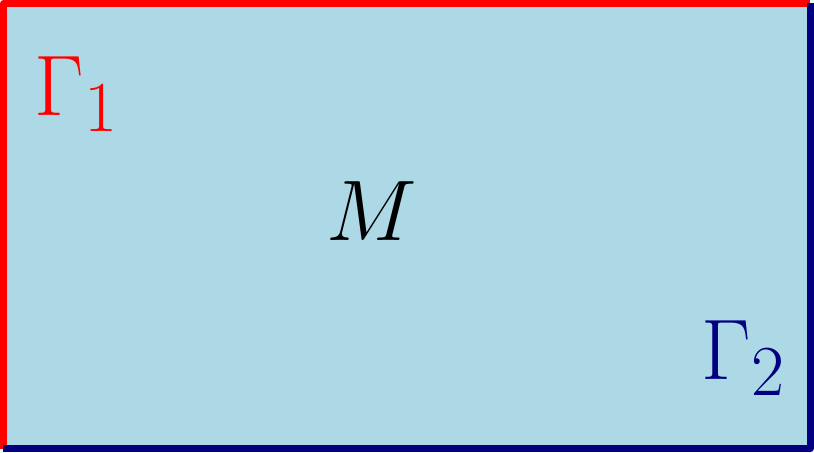}}%
\hspace{8pt}%
\subfloat[][Computational mesh used for the simulation.]{%
	\label{fig:mesh}%
	\includegraphics[width=0.51\columnwidth]{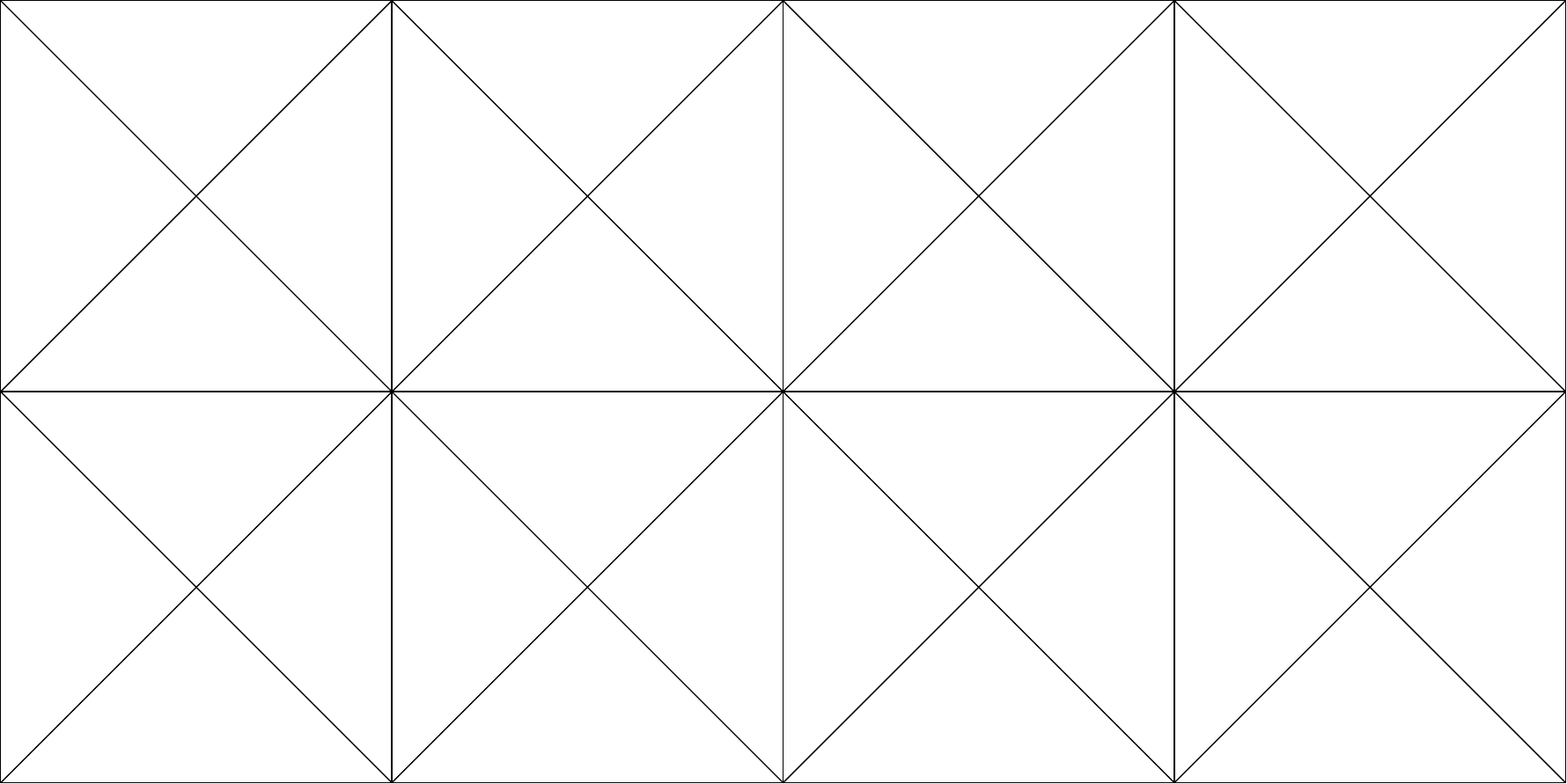}}%
\hspace{8pt}%
\subfloat[][Degrees of freedom for the 0-form.]{%
	\label{fig:zero_form}%
	\includegraphics[width=0.48\columnwidth]{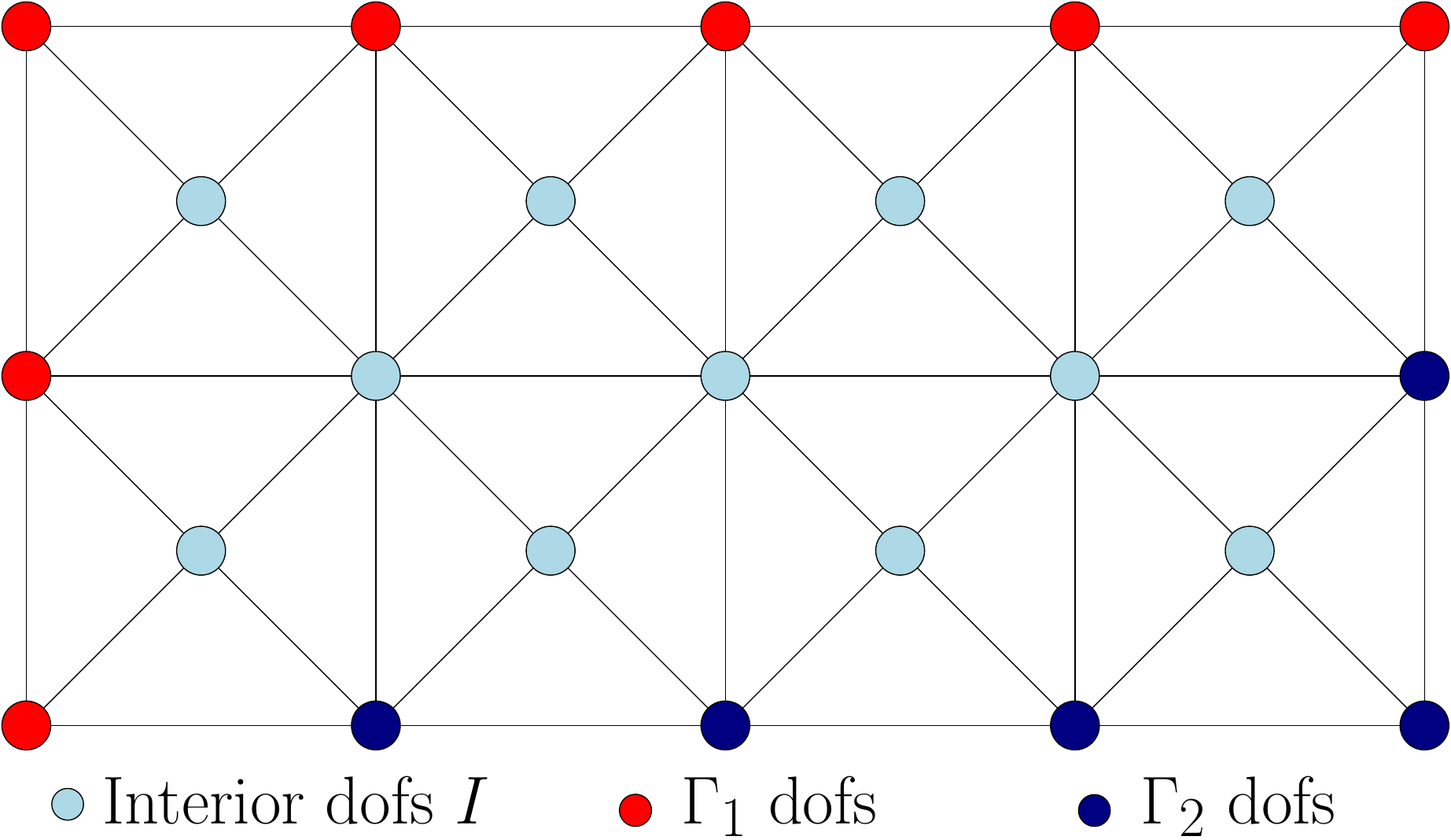}}
	\hspace{8pt}%
\subfloat[][Degrees of freedom for the 1-form.]{%
	\label{fig:one_form}%
	\includegraphics[width=0.46\columnwidth]{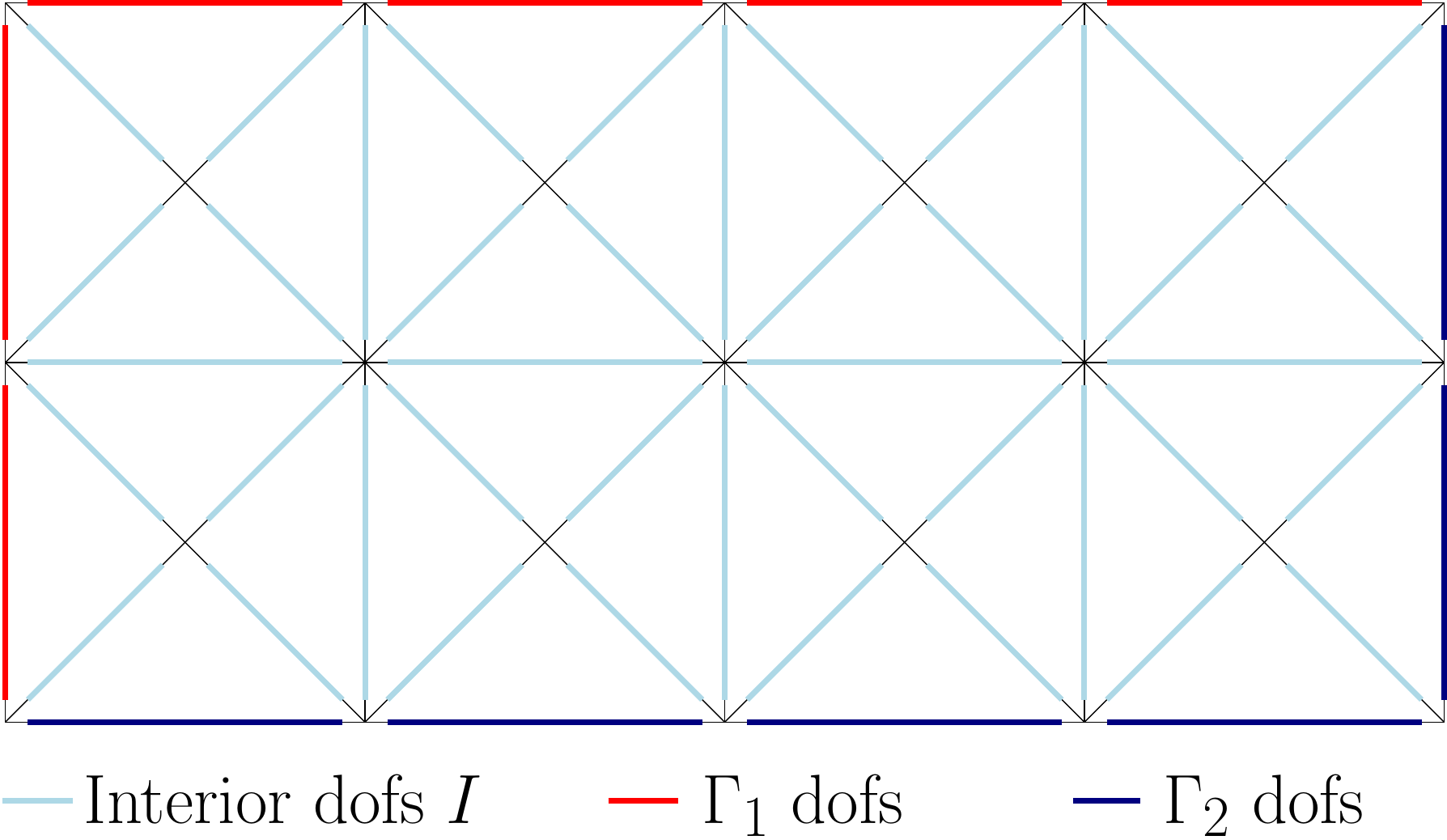}}
\caption{Example of a 2D domain with splitting of the boundary due to the boundary conditions. In the case $p=2, \; q=1$ the duality is between a 0-form  and a 1-form. Notice that the points at the intersection of $\Gamma_1$ and $\Gamma_2$ are attributed to $\Gamma_1$ as those will correspond to the essential boundary conditions for the 0-form.}%
\label{fig:notation_dofs}%
\end{figure}

\subsubsection{Discrete primal port-Hamiltonian system}
Starting from the weak formulation \eqref{eq:weak_primalPH}, the discrete primal port-Hamiltonian system is obtained: $\dual{e}^p_{1, h} \in \dual{\mathcal{V}}^{p}_{s, h}, \; \dual{e}^{p-1}_{2, h} \in \dual{\mathcal{V}}^{p-1}_{s, h}$ such that $(-1)^p \tr \dual{e}^{p-1}_{2, h} \vert_{\Gamma_2} = \dual{u}^{p-1}_{2, h}$ and
 \begin{equation}\label{eq:discr_weak_primalPH}
    \begin{aligned}
    \inpr[M]{\dual{v}^p_h}{\dual{C}^p \partial_t \dual{e}^p_{1, h}} &= (-1)^{p}\inpr[M]{\dual{v}^p_h}{\d \dual{e}^{p-1}_{2, h}}, \\
     \inpr[M]{\dual{v}^{p-1}_h}{\dual{E}^{p-1} \partial_t \dual{e}^{p-1}_{2, h}} &= (-1)^{p} \{- \inpr[M]{\d\dual{v}^{p-1}_h}{\dual{e}^p_{1, h}} + \dualpr[\Gamma_1]{\dual{v}^{p-1}_h}{u^{q-1}_{1, h}}\},
    \end{aligned}  \qquad 
    \begin{aligned}
    &\forall \dual{v}^p \in \dual{\mathcal{V}}^p_{s, h}, \\
    &\forall \dual{v}^{p-1} \in \dual{\mathcal{V}}^{p-1}_{s, h}(\Gamma_2), \\
    \end{aligned}
 \end{equation}
where $u_{1, h}^{q-1} \in \tr {\mathcal{V}}^{q-1}_{s, h}|_{\Gamma_1}$ and $\dual{\mathcal{V}}^{p-1}_{s, h}(\Gamma_2)$ is a polynomial space incorporating boundary conditions
\begin{equation}
        \dual{\mathcal{V}}^{p-1}_{s, h}(\Gamma_2) := \{\dual{w}^{p-1}_h \in \dual{\mathcal{V}}^{p-1}_{s, h} \vert \; \tr \dual{w}^{p-1}_h\vert_{\Gamma_2} = 0 \}. 
\end{equation}
The weak formulation \eqref{eq:discr_weak_primalPH} leads to the following discrete system
\begin{equation}\label{eq:alg_primalPH}
\begin{aligned}
    \begin{bmatrix}
        \mathbf{M}^p_{\dual{C}, s} & \mathbf{0} \\
        \mathbf{0} & [\mathbf{M}^{p-1}_{\dual{E}, s}]_{I \cup \Gamma_1}
    \end{bmatrix}
    \begin{pmatrix}
    \dot{\dual{\mathbf{e}}}^p_1 \\
    \dot{\dual{\mathbf{e}}}^{p-1}_2 \\
    \end{pmatrix} &=  (-1)^p
    \begin{bmatrix}
        \mathbf{0} & \mathbf{D}^{p-1}_s \\
        -[(\mathbf{D}_{s}^{p-1})^\top]_{I \cup \Gamma_1} & \mathbf{0}
    \end{bmatrix}
    \begin{pmatrix}
    {\dual{\mathbf{e}}}^p_1 \\
    {\dual{\mathbf{e}}}^{p-1}_2 \\
    \end{pmatrix} +
    \begin{bmatrix}
        \mathbf{0}\\
        (-1)^{p}[\mathbf{B}^{q-1}_{s}]_{I \cup \Gamma_1}^{\Gamma_1}
    \end{bmatrix}
    \mathbf{u}^{q-1}_1, \\
    (-1)^p[{\dual{\mathbf{e}}}^{p-1}_2]_{\Gamma_2} &= 
    \begin{bmatrix}
    \mathbf{0} & (-1)^p[\mathbf{T}^{p-1}_{s}]_{\Gamma_2} \\
    \end{bmatrix}
    \begin{pmatrix}
    {\dual{\mathbf{e}}}^p_1 \\
    {\dual{\mathbf{e}}}^{p-1}_2 \\
    \end{pmatrix} = \dual{\mathbf{u}}^{p-1}_2,
\end{aligned}
\end{equation}
where $[\mathbf{M}_{\dual{C}, s}^p]_{i}^j := \inpr[M]{\dual{\phi}^p_{s, i}}{\dual{C}^p \dual{\phi}^p_{s, j}}$ and $[\mathbf{M}_{\dual{E}, s}^{p-1}]_{i}^j := \inpr[M]{\dual{\phi}^{p-1}_{s, i}}{\dual{E}^{p-1}\dual{\phi}^{p-1}_{s, j}}$.
This system possesses an associated discrete Hamiltonian
\begin{equation}\label{eq:discr_H_primal}
\dual{H}_h^{p} := \frac{1}{2}\inpr[M]{\dual{e}_{1, h}^p}{\dual{C}^p \dual{e}_{1, h}^p} + \frac{1}{2}\inpr[M]{\dual{e}_{2, h}^{p-1}}{\dual{E}^{p-1} \dual{e}_{2, h}^{p-1}} = \frac{1}{2} (\dual{\mathbf{e}}^p_1)^\top \mathbf{M}^p_{\dual{C}, s} \dual{\mathbf{e}}^p_1 + \frac{1}{2} (\dual{\mathbf{e}}^{p-1}_2)^\top \mathbf{M}^{p-1}_{\dual{E}, s} \dual{\mathbf{e}}^{p-1}_2.
\end{equation}

\begin{proposition}\label{pr:discr_dotH_primal}
The energy rate of the Hamiltonian in Eq. \eqref{eq:discr_H_primal} is given by  
\begin{equation}\label{eq:discr_dotH_primal}
    \diff{\dual{H}_h^{p}}{t} = \dual{P}_h^p:= (\dual{\mathbf{u}}^{p-1}_2)^\top \widetilde{\mathbf{y}}^{p-1} + (\dual{\mathbf{y}}^{p-1}_2)^\top[\mathbf{\Psi}^{q-1}_{s, \partial}]_{\Gamma_1}^{\Gamma_1} \mathbf{u}^{q-1}_1, 
\end{equation} 
where $\dual{\mathbf{y}}^{p-1}_2 = (-1)^p [\dual{\mathbf{e}}^{p-1}_2]_{\Gamma_1}$ is defined in Eq. \eqref{eq:dofs_y2} and
\begin{equation} \label{eq:alg_til_y_primal} 
    \widetilde{\mathbf{y}}^{p-1} :=  (-1)^p[\mathbf{M}^{p-1}_{\dual{E}, s}]_{\Gamma_2} \dot{\dual{\mathbf{e}}}^{p-1}_2  +[(\mathbf{D}_{s}^{p-1})^\top]_{\Gamma_2} \dual{\mathbf{e}}^p_1,  
\end{equation}
\end{proposition}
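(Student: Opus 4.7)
The plan is to differentiate the quadratic Hamiltonian \eqref{eq:discr_H_primal} in time and then substitute the dynamics \eqref{eq:alg_primalPH}, being careful to keep track of the partition of degrees of freedom into the index sets $I,\,\Gamma_1,\,\Gamma_2$. Symmetry of the mass matrices gives
\[
\diff{\dual{H}_h^p}{t} = (\dual{\mathbf{e}}^p_1)^\top \mathbf{M}^p_{\dual{C}, s}\,\dot{\dual{\mathbf{e}}}^p_1 + (\dual{\mathbf{e}}^{p-1}_2)^\top \mathbf{M}^{p-1}_{\dual{E}, s}\,\dot{\dual{\mathbf{e}}}^{p-1}_2.
\]
The only equations at our disposal are \eqref{eq:alg_primalPH}, which govern $\mathbf{M}^p_{\dual{C}, s}\,\dot{\dual{\mathbf{e}}}^p_1$ (all rows) and $[\mathbf{M}^{p-1}_{\dual{E}, s}]_{I\cup\Gamma_1}\,\dot{\dual{\mathbf{e}}}^{p-1}_2$ (only the rows not belonging to $\Gamma_2$, since those carry the essential boundary condition $\dual{\mathbf{u}}^{p-1}_2$). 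Consequently, I first split
\[
(\dual{\mathbf{e}}^{p-1}_2)^\top \mathbf{M}^{p-1}_{\dual{E}, s}\,\dot{\dual{\mathbf{e}}}^{p-1}_2 = ([\dual{\mathbf{e}}^{p-1}_2]_{I\cup\Gamma_1})^\top [\mathbf{M}^{p-1}_{\dual{E}, s}]_{I\cup\Gamma_1}\,\dot{\dual{\mathbf{e}}}^{p-1}_2 + ([\dual{\mathbf{e}}^{p-1}_2]_{\Gamma_2})^\top [\mathbf{M}^{p-1}_{\dual{E}, s}]_{\Gamma_2}\,\dot{\dual{\mathbf{e}}}^{p-1}_2.
\]

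The first of the two resulting inner products is then substituted using the second row of \eqref{eq:alg_primalPH}, and the term containing $\mathbf{M}^p_{\dual{C}, s}\,\dot{\dual{\mathbf{e}}}^p_1$ using the first row. This yields a cross term $(-1)^p(\dual{\mathbf{e}}^p_1)^\top \mathbf{D}^{p-1}_s \dual{\mathbf{e}}^{p-1}_2$ from the first equation and $-(-1)^p([\dual{\mathbf{e}}^{p-1}_2]_{I\cup\Gamma_1})^\top[(\mathbf{D}_s^{p-1})^\top]_{I\cup\Gamma_1}\dual{\mathbf{e}}^p_1$ from the second. Rewriting the former also as $(-1)^p(\dual{\mathbf{e}}^{p-1}_2)^\top (\mathbf{D}_s^{p-1})^\top \dual{\mathbf{e}}^p_1$ and subtracting, the $I\cup\Gamma_1$ blocks exactly cancel and only the $\Gamma_2$ block survives, leaving $(-1)^p([\dual{\mathbf{e}}^{p-1}_2]_{\Gamma_2})^\top [(\mathbf{D}_s^{p-1})^\top]_{\Gamma_2} \dual{\mathbf{e}}^p_1$. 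Together with the boundary mass term kept aside, and using $[\dual{\mathbf{e}}^{p-1}_2]_{\Gamma_2}=(-1)^p\dual{\mathbf{u}}^{p-1}_2$ from \eqref{eq:dofs_u}, these two contributions combine into $(\dual{\mathbf{u}}^{p-1}_2)^\top \widetilde{\mathbf{y}}^{p-1}$ by the very definition \eqref{eq:alg_til_y_primal} of $\widetilde{\mathbf{y}}^{p-1}$.

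The remaining forcing term is $(-1)^p([\dual{\mathbf{e}}^{p-1}_2]_{I\cup\Gamma_1})^\top [\mathbf{B}^{q-1}_s]_{I\cup\Gamma_1}^{\Gamma_1}\mathbf{u}^{q-1}_1$. Here the key observation is the factorization $\mathbf{B}^{q-1}_s = (\mathbf{T}^{p-1}_s)^\top \mathbf{\Psi}^{q-1}_{s,\partial}$ from \eqref{eq:control_mat}: since $(\mathbf{T}^{p-1}_s)^\top$ vanishes on rows indexed by interior degrees of freedom, the $I$ rows of $\mathbf{B}^{q-1}_s$ are zero and only the $\Gamma_1$ rows remain, producing $(-1)^p ([\dual{\mathbf{e}}^{p-1}_2]_{\Gamma_1})^\top [\mathbf{\Psi}^{q-1}_{s,\partial}]_{\Gamma_1}^{\Gamma_1}\mathbf{u}^{q-1}_1$. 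Invoking the definition \eqref{eq:dofs_y2} of the output $\dual{\mathbf{y}}^{p-1}_2$ turns this into $(\dual{\mathbf{y}}^{p-1}_2)^\top [\mathbf{\Psi}^{q-1}_{s,\partial}]_{\Gamma_1}^{\Gamma_1}\mathbf{u}^{q-1}_1$, completing the proof.

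The main obstacle is bookkeeping: carrying the signs $(-1)^p$ and $(-1)^{p(n-p)}$ consistently through every substitution, and ensuring that the row/column restriction operations on $\mathbf{B}^{q-1}_s$ are handled via the $(\mathbf{T}^{p-1}_s)^\top$ factor so that only genuine boundary rows survive. Everything else is a direct application of the system dynamics \eqref{eq:alg_primalPH}, symmetry of the material mass matrices, and the definitions \eqref{eq:dofs_u}–\eqref{eq:dofs_y2} and \eqref{eq:alg_til_y_primal}.
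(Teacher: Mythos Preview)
Your proof is correct and follows essentially the same route as the paper: differentiate the quadratic Hamiltonian, split the $\dual{\mathbf{e}}^{p-1}_2$ rows into $I\cup\Gamma_1$ and $\Gamma_2$, substitute the dynamics \eqref{eq:alg_primalPH} so that the cross $\mathbf{D}^{p-1}_s$ terms cancel over $I\cup\Gamma_1$, and then use the factorization $\mathbf{B}^{q-1}_s=(\mathbf{T}^{p-1}_s)^\top\mathbf{\Psi}^{q-1}_{s,\partial}$ together with the definitions of $\dual{\mathbf{u}}^{p-1}_2$ and $\dual{\mathbf{y}}^{p-1}_2$. Your treatment of the boundary control matrix step is in fact slightly more explicit than the paper's, which performs the same reduction without spelling out that the interior rows of $(\mathbf{T}^{p-1}_s)^\top$ vanish.
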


\begin{proof}
From the dynamics \eqref{eq:alg_primalPH} it is obtained
\begin{equation*}
\begin{aligned}
    \diff{\dual{H}_h^p}{t} &= (\dual{\mathbf{e}}^p_1)^\top \mathbf{M}^p_{\dual{C}, s} \dot{\dual{\mathbf{e}}}^p_1 + (\dual{\mathbf{e}}^{p-1}_2)^\top \mathbf{M}^{p-1}_{\dual{E}, s} \dot{\dual{\mathbf{e}}}^{p-1}_2, \\
    &= (\dual{\mathbf{e}}^p_1)^\top \mathbf{M}^p_{\dual{C}, s} \dot{\dual{\mathbf{e}}}^p_1 + ([\dual{\mathbf{e}}^{p-1}_2]_{I\cup \Gamma_1})^\top [\mathbf{M}^{p-1}_{\dual{E}, s}]_{I\cup \Gamma_1} \dot{\dual{\mathbf{e}}}^{p-1}_2 + ([\dual{\mathbf{e}}^{p-1}_2]_{\Gamma_2})^\top [\mathbf{M}^{p-1}_{\dual{E}, s}]_{\Gamma_2} \dot{\dual{\mathbf{e}}}^{p-1}_2 , \\
    &= (-1)^p ([\dual{\mathbf{e}}^{p-1}_2]_{\Gamma_2})^\top [(\mathbf{D}_{s}^{p-1})^\top]_{\Gamma_2} \dual{\mathbf{e}}^p_1  + (-1)^{p}([\dual{\mathbf{e}}^{p-1}_2]_{I\cup \Gamma_1})^\top[\mathbf{B}^{q-1}_{s}]_{I \cup \Gamma_1}^{\Gamma_1} \mathbf{u}^{q-1}_1 \\
    &+ ([\dual{\mathbf{e}}^{p-1}_2]_{\Gamma_2})^\top [\mathbf{M}^{p-1}_{\dual{E}, s}]_{\Gamma_2} \dot{\dual{\mathbf{e}}}^{p-1}_2, \\
    &= (\dual{\mathbf{u}}^{p-1}_2)^\top \{(-1)^p[\mathbf{M}^{p-1}_{\dual{E}, s}]_{\Gamma_2} \dot{\dual{\mathbf{e}}}^{p-1}_2 +[(\mathbf{D}_{s}^{p-1})^\top]_{\Gamma_2} \dual{\mathbf{e}}^p_1\} + (\dual{\mathbf{y}}^{p-1}_2)^\top[\mathbf{\Psi}^{q-1}_{s, \partial}]_{\Gamma_1}^{\Gamma_1} \mathbf{u}^{q-1}_1.
\end{aligned}
\end{equation*}
Notice that the matrix $[\mathbf{\Psi}^{q-1}_{s}]_{\Gamma_1}^{\Gamma_1}$ is rectangular. Given definitions \eqref{eq:alg_til_y_primal}, \eqref{eq:dofs_y2}, the statement is proven. 
\end{proof}

\revOne{
\begin{remark}[The collocated nature of outputs $\widetilde{\mathbf{y}}^{p-1}$ in a pHDAE setting]\label{rmk:til_y_PHDAE}
The collocated output in Eqs. \eqref{eq:alg_til_y_primal} is a consequence of the strong imposition of the boundary conditions by direct assignment and represents the reaction to be applied at the boundary to guarantee to that the state follows the input trajectory. To highlight their collocated nature it is necessary to consider a differential-algebraic framework and in particular port-Hamiltonian descriptor systems \cite{beattie2018pHDAE}. System \eqref{eq:alg_primalPH} can be equivalently rewritten in the following canonical pHDAE form \cite{beattie2018pHDAE}
\begin{equation}
\begin{aligned}
    \mathbf{E}\dot{\mathbf{x}} &= \mathbf{J}\mathbf{x} + \mathbf{B}\mathbf{u}, \\
    \mathbf{y} &= \mathbf{B}^\top\mathbf{x},
\end{aligned}
\end{equation}
where the state, input and output variables read
\begin{equation}
    \mathbf{x} = \begin{pmatrix}
    {\dual{\mathbf{e}}}^p_1 \\
    [\dual{\mathbf{e}}^{p-1}_2]_{I \cup \Gamma_1} \\
    [\dual{\mathbf{e}}^{p-1}_2]_{\Gamma_2} \\
    \dual{\bm{\lambda}}^{p-1}
    \end{pmatrix}, \qquad 
    \mathbf{u} = \begin{pmatrix}
        \mathbf{u}^{q-1}_1 \\
        \dual{\mathbf{u}}^{p-1}_2
    \end{pmatrix}, \qquad
    \mathbf{y} = \begin{pmatrix}
        \widehat{\mathbf{y}}^{q-1}_2 \\
        \widetilde{\mathbf{y}}^{p-1} \\
    \end{pmatrix}.
\end{equation}
The state now includes a Lagrange multiplier $\dual{\bm{\lambda}}^{p-1}$ to enforce the constraint. The $\mathbf{E}, \; \mathbf{J}, \; \mathbf{B}$ matrices are given by
\begin{equation}
\begin{aligned}
    \mathbf{E} &= \begin{bmatrix}
        \mathbf{M}^p_{\dual{C}, s} & \mathbf{0} & \mathbf{0} & \mathbf{0} \\
        \mathbf{0} & [\mathbf{M}^{p-1}_{\dual{E}, s}]_{I \cup \Gamma_1}^{I \cup \Gamma_1} & [\mathbf{M}^{p-1}_{\dual{E}, s}]_{I \cup \Gamma_1}^{\Gamma_2} & \mathbf{0} \\
        \mathbf{0} & [\mathbf{M}^{p-1}_{\dual{E}, s}]_{\Gamma_2}^{I \cup \Gamma_1} & [\mathbf{M}^{p-1}_{\dual{E}, s}]_{\Gamma_2}^{\Gamma_2} & \mathbf{0} \\
        \mathbf{0} & \mathbf{0} & \mathbf{0} & \mathbf{0} \\
    \end{bmatrix}, \\
    \mathbf{J} &= (-1)^p
    \begin{bmatrix}
        \mathbf{0} & -[\mathbf{D}^{p-1}_s]^{I \cup \Gamma_1} & -[\mathbf{D}^{p-1}_s]^{\Gamma_2} & \mathbf{0} \\
        [(\mathbf{D}_{s}^{p-1})^\top]_{I \cup \Gamma_1} & \mathbf{0} & \mathbf{0} & \mathbf{0} \\
        [(\mathbf{D}_{s}^{p-1})^\top]_{\Gamma_2} & \mathbf{0} & \mathbf{0} & \mathbf{I} \\
        \mathbf{0} & \mathbf{0} & -\mathbf{I} & \mathbf{0}
    \end{bmatrix}, \qquad
    \mathbf{B} = 
    \begin{bmatrix}
        \mathbf{0} & \mathbf{0}\\
        (-1)^{p}[\mathbf{B}^{q-1}_{s}]_{I \cup \Gamma_1}^{\Gamma_1} & \mathbf{0} \\
        \mathbf{0} & \mathbf{0}\\
        \mathbf{0} & \mathbf{I}\\
    \end{bmatrix}.
\end{aligned}
\end{equation}

The variation of the energy is given by 
\begin{equation}\label{eq:Hprimal_mult}
    \diff{\dual{H}^{p}_h}{t} = (\dual{\mathbf{u}}^{p-1}_2)^\top \widetilde{\mathbf{y}}^{p-1} + (\widehat{\mathbf{y}}^{q-1}_2)^\top \mathbf{u}^{q-1}_1.
\end{equation}
Notice that the output $\widehat{\mathbf{y}}^{q-1}_2$  corresponds to
\begin{equation*}
\begin{aligned}
    \widehat{\mathbf{y}}^{q-1}_2 &= [(\mathbf{B}^{q-1}_{s})^\top]^{I \cup \Gamma_1}_{\Gamma_1} [\dual{\mathbf{e}}^{p-1}_2]_{I \cup \Gamma_1}, \\
    &=  [(\mathbf{\Psi}^{q-1}_{s, \partial})^\top \mathbf{T}_s^{p-1}]^{I \cup \Gamma_1}_{\Gamma_1} [\dual{\mathbf{e}}^{p-1}_2]_{I \cup \Gamma_1}, \\
    &=  ([\mathbf{\Psi}^{q-1}_{s, \partial}]^{\Gamma_1}_{\Gamma_1})^\top \dual{\mathbf{y}}^{p-1}_2.
\end{aligned}
\end{equation*}
Thus Eq. \eqref{eq:Hprimal_mult} coincide with \eqref{eq:discr_dotH_primal}. Analogous examples of this construction can be found in \cite[Remark 3.6]{altmann2021poro} and \cite[Section 5.4]{mehrmann2022control}.
\end{remark}
}
\subsubsection{Discrete dual port-Hamiltonian system}
Starting from the weak formulation \eqref{eq:weak_dualPH}, the discrete dual port-Hamiltonian system is obtained: ${e}^{q-1}_{1, h} \in {\mathcal{V}}^{q-1}_{s, h}, \; {e}^{q}_{2, h} \in {\mathcal{V}}^{q}_{s, h}$ such that $\tr {e}^{q-1}_{1, h} \vert_{\Gamma_1} = {u}^{q-1}_{1, h}$ and
\begin{equation}\label{eq:discr_weak_dualPH}
	\begin{aligned}
    	\inpr[M]{\dual{v}^{q-1}_h}{{C}^{q-1} \partial_t e^{q-1}_{1, h}} &= \inpr[M]{\d\dual{v}^{q-1}_h}{{e}^q_{2, h}} + (-1)^{(p-1)(q-1)} \dualpr[\Gamma_2]{{v}^{q-1}_h}{\dual{u}_{2, h}^{p-1}}, \\
		\inpr[M]{v^{q}_h}{E^{q} \partial_t {e}^{q}_{2, h}} &= -\inpr[M]{v^q_h}{\d e^{q-1}_{1, h}}, \\
	\end{aligned} \qquad 
	\begin{aligned}
		&\forall v^{q-1}_h \in \mathcal{V}^{q-1}_{s, h}(\Gamma_1), \\
		&\forall {v}^q_h \in \mathcal{V}^{q}_{s, h}, \\
	\end{aligned}
\end{equation}
where $\dual{u}_{2, h}^{p-1} \in \tr \dual{\mathcal{V}}_{s, h}^{p-1}|_{\Gamma_2}$ and $\mathcal{V}_{s, h}^{q-1}(\Gamma_1)$ is a polynomial space with boundary conditions
\begin{equation}
    \mathcal{V}^{q-1}_{s, h}(\Gamma_1) := \{w^{q-1}_h \in \mathcal{V}^{q-1}_{s, h}| \tr w^{q-1}_h |_{\Gamma_1} = 0 \}.
\end{equation}
The algebraic realization of the discrete weak formulation \eqref{eq:discr_weak_dualPH} reads
\begin{equation}\label{eq:alg_dualPH}
\begin{aligned}
    \begin{bmatrix}
        [\mathbf{M}^{q-1}_{{C}, s}]_{I \cup \Gamma_2} & \mathbf{0} \\
        \mathbf{0} & \mathbf{M}^q_{E, s}
    \end{bmatrix}
    \begin{pmatrix}
    \dot{\mathbf{e}}_1^{q-1} \\
    \dot{{\mathbf{e}}}_2^q
    \end{pmatrix} &= 
    \begin{bmatrix}
        \mathbf{0} & [(\mathbf{D}_{s}^{q-1})^\top]_{I \cup \Gamma_2}\\
        -\mathbf{D}^{q-1}_s & \mathbf{0}
    \end{bmatrix}
    \begin{pmatrix}
    \mathbf{e}_1^{q-1} \\
    {\mathbf{e}}_2^q
    \end{pmatrix} +
    \begin{bmatrix}
        (-1)^{(p-1)(q-1)}[\mathbf{B}_{s}^{p-1}]_{I \cup \Gamma_2}^{\Gamma_2} \\
        \mathbf{0}
    \end{bmatrix}\dual{\mathbf{u}}^{p-1}_2, \\
     [\mathbf{e}^{q-1}]_{\Gamma_1} &= 
    \begin{bmatrix}
    [\mathbf{T}^{q-1}_{s}]_{\Gamma_1} & \mathbf{0} \\
    \end{bmatrix}
    \begin{pmatrix}
    \mathbf{e}^{q-1}_1 \\
    \mathbf{e}^q_2
    \end{pmatrix} = \mathbf{u}^{q-1}_1,
\end{aligned}
\end{equation}
where $[\mathbf{M}_{{C}, s}^{q-1}]_{i}^j := \inpr[M]{{\phi}^{q-1}_{s, i}}{{C}^{q-1} {\phi}^{q-1}_{s, j}}$ and $[\mathbf{M}_{E, s}^{q}]_{i}^j := \inpr[M]{\phi^{q}_{s, i}}{E^{q} \phi^{q}_{s, j}}$. The associated discrete Hamiltonian for this system reads
\begin{equation}\label{eq:discr_H_dual}
    H_h^{q} := \frac{1}{2}\inpr[M]{{e}_{1, h}^{q-1}}{{C}^{q-1} {e}_{1, h}^p} + \frac{1}{2}\inpr[M]{\dual{e}_{2, h}^q}{{E}^q \dual{e}_{2, h}^q} = \frac{1}{2} ({\mathbf{e}}^{q-1}_1)^\top \mathbf{M}^{q-1}_{C, s} {\mathbf{e}}^{q-1}_1 + \frac{1}{2} ({\mathbf{e}}^q_2)^\top \mathbf{M}^q_{E, s} \mathbf{e}^q_2.  
\end{equation}

\begin{proposition}\label{pr:discr_dotH_dual}
The energy rate of the Hamiltonian in Eq. \eqref{eq:discr_H_dual} (arising from the mixed discretization) is given by  
\begin{equation}\label{eq:discr_dotH_dual}
    \diff{{H}_h^{q}}{t} = P_h^q := (\mathbf{u}^{q-1}_1)^\top \widetilde{\mathbf{y}}^{q-1} + (\dual{\mathbf{u}}^{p-1}_2)^\top[\mathbf{\Psi}^{q-1}_{s, \partial}]_{\Gamma_2}^{\Gamma_2} \mathbf{y}^{q-1}_1,
\end{equation} 
where ${\mathbf{y}}^{q-1}_1 = [\mathbf{e}^{q-1}_1]_{\Gamma_2}$ is defined in Eq. and \eqref{eq:dofs_y1}
\begin{equation}
    \widetilde{\mathbf{y}}^{q-1} := [\mathbf{M}^{q-1}_{{C}, s}]_{\Gamma_1} \dot{\mathbf{e}}^{q-1}_1 -[(\mathbf{D}_{s}^{q-1})^\top]_{\Gamma_1} {\mathbf{e}}^q_2, \label{eq:alg_til_y_dual} 
\end{equation}
\end{proposition}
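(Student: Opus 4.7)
The proof plan mirrors Proposition \ref{pr:discr_dotH_primal} almost step for step, with the roles of the state variables exchanged (now $\mathbf{e}^{q-1}_1$ carries the essential boundary condition on $\Gamma_1$, and $\mathbf{e}^q_2$ plays the role of the "other" state). I would start by time-differentiating the discrete Hamiltonian \eqref{eq:discr_H_dual}, which, using the symmetry of $\mathbf{M}^{q-1}_{C,s}$ and $\mathbf{M}^q_{E,s}$, yields
\[
\dot H_h^q = (\mathbf{e}^{q-1}_1)^\top \mathbf{M}^{q-1}_{C,s} \dot{\mathbf{e}}^{q-1}_1 + (\mathbf{e}^q_2)^\top \mathbf{M}^q_{E,s} \dot{\mathbf{e}}^q_2.
\]

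Next I would split the first term according to the partition $\{I\cup\Gamma_2,\Gamma_1\}$ of the degrees of freedom of $\mathbf{e}^{q-1}_1$, exploiting that $[\mathbf{e}^{q-1}_1]_{\Gamma_1}=\mathbf{u}^{q-1}_1$ (the essential boundary condition for the dual system, see Eq. \eqref{eq:dofs_u}). The block $([\mathbf{e}^{q-1}_1]_{I\cup\Gamma_2})^\top[\mathbf{M}^{q-1}_{C,s}]_{I\cup\Gamma_2}\dot{\mathbf{e}}^{q-1}_1$ can be replaced using the first equation of the dynamics \eqref{eq:alg_dualPH}, and $\mathbf{M}^q_{E,s}\dot{\mathbf{e}}^q_2=-\mathbf{D}^{q-1}_s\mathbf{e}^{q-1}_1$ comes from the second equation. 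After splitting the factor $\mathbf{D}^{q-1}_s\mathbf{e}^{q-1}_1$ analogously into its $I\cup\Gamma_2$ and $\Gamma_1$ column contributions, the pieces involving $[(\mathbf{D}^{q-1}_s)^\top]_{I\cup\Gamma_2}\mathbf{e}^q_2$ and $(\mathbf{e}^q_2)^\top[\mathbf{D}^{q-1}_s]^{I\cup\Gamma_2}$ cancel exactly (mirroring the cancellation in the primal proof).

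What remains after this cancellation is
\[
\dot H_h^q = (\mathbf{u}^{q-1}_1)^\top\!\left\{[\mathbf{M}^{q-1}_{C,s}]_{\Gamma_1}\dot{\mathbf{e}}^{q-1}_1 - [(\mathbf{D}^{q-1}_s)^\top]_{\Gamma_1}\mathbf{e}^q_2\right\} + (-1)^{(p-1)(q-1)}([\mathbf{e}^{q-1}_1]_{I\cup\Gamma_2})^\top [\mathbf{B}^{p-1}_s]_{I\cup\Gamma_2}^{\Gamma_2}\dual{\mathbf{u}}^{p-1}_2.
\]
The curly bracket is exactly $\widetilde{\mathbf{y}}^{q-1}$ by definition \eqref{eq:alg_til_y_dual}, giving the first term of \eqref{eq:discr_dotH_dual}.

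The main obstacle, as in the primal case, is handling the second term correctly. Here I would use the factorization $\mathbf{B}^{p-1}_s=(\mathbf{T}^{q-1}_s)^\top\mathbf{\Psi}^{p-1}_{s,\partial}$ from \eqref{eq:control_mat}: rows of $(\mathbf{T}^{q-1}_s)^\top$ indexed by $I$ vanish, so only the $\Gamma_2$ rows survive and $([\mathbf{e}^{q-1}_1]_{I\cup\Gamma_2})^\top[\mathbf{B}^{p-1}_s]_{I\cup\Gamma_2}^{\Gamma_2}$ collapses to $(\mathbf{y}^{q-1}_1)^\top[\mathbf{\Psi}^{p-1}_{s,\partial}]_{\Gamma_2}^{\Gamma_2}$ using \eqref{eq:dofs_y1}. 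Applying the alternating identity $\mathbf{\Psi}^{p-1}_{s,\partial}=(-1)^{(p-1)(q-1)}(\mathbf{\Psi}^{q-1}_{s,\partial})^\top$ and transposing absorbs the sign $(-1)^{(p-1)(q-1)}$ and yields $(\dual{\mathbf{u}}^{p-1}_2)^\top[\mathbf{\Psi}^{q-1}_{s,\partial}]_{\Gamma_2}^{\Gamma_2}\mathbf{y}^{q-1}_1$, which is the second term of \eqref{eq:discr_dotH_dual}. The book-keeping with index sets and with the two distinct $\Gamma_j$ (one indexing $(q-1)$-DOFs, the other $(p-1)$-DOFs on the same boundary piece) is the only delicate point, and it parallels exactly the treatment given in the primal proof.
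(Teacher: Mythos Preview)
Your proof is correct and follows essentially the same route as the paper's own proof: differentiate $H_h^q$, split $(\mathbf{e}^{q-1}_1)^\top\mathbf{M}^{q-1}_{C,s}\dot{\mathbf{e}}^{q-1}_1$ along $\{I\cup\Gamma_2,\Gamma_1\}$, substitute the two equations of \eqref{eq:alg_dualPH}, cancel the $[(\mathbf{D}^{q-1}_s)^\top]_{I\cup\Gamma_2}$ contributions, and rewrite the remaining boundary term. Your treatment of the boundary term via the factorization $\mathbf{B}^{p-1}_s=(\mathbf{T}^{q-1}_s)^\top\mathbf{\Psi}^{p-1}_{s,\partial}$ and the alternating identity $\mathbf{\Psi}^{p-1}_{s,\partial}=(-1)^{(p-1)(q-1)}(\mathbf{\Psi}^{q-1}_{s,\partial})^\top$ is in fact more explicit than the paper, which jumps directly to the final expression.
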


\begin{proof}
From the dynamics \eqref{eq:alg_dualPH}, it is obtained
\begin{equation*}
\begin{aligned}
    \diff{H_h^q}{t} &= (\mathbf{e}^{q-1}_1)^\top \mathbf{M}^{q-1}_{{C}, s} \dot{\mathbf{e}}^{q-1}_1 + ({\mathbf{e}}^q_2)^\top \mathbf{M}^q_{E, s} \dot{\mathbf{e}}^q_2, \\  
    &=([{\mathbf{e}}^{q-1}_1]_{I\cup \Gamma_2})^\top [\mathbf{M}^{q-1}_{C, s}]_{I\cup \Gamma_2} \dot{\mathbf{e}}^{q-1}_1 + ([{\mathbf{e}}^{q-1}_1]_{\Gamma_1})^\top [\mathbf{M}^{q-1}_{C, s}]_{\Gamma_1} \dot{\mathbf{e}}^{q-1}_1 + ({\mathbf{e}}^q_2)^\top \mathbf{M}^q_{E, s} \dot{\mathbf{e}}^q_2,\\
    &=- ([{\mathbf{e}}^{q-1}_1]_{\Gamma_1})^\top[(\mathbf{D}_{s}^{q-1})^\top]_{\Gamma_1}{\mathbf{e}}^q_2  + (-1)^{(p-1)(q-1)} ({\mathbf{e}}^{q-1}_1)^\top_{I \cup \Gamma_2}[\mathbf{B}_{s}^{p-1}]_{I \cup \Gamma_2}^{\Gamma_2} \dual{\mathbf{u}}^{p-1}_2 \\
    &+  ([{\mathbf{e}}^{q-1}_1]_{\Gamma_1})^\top [\mathbf{M}^{q-1}_{C, s}]_{\Gamma_1} \dot{\mathbf{e}}^{q-1}_1,\\
    &=(\mathbf{u}^{q-1}_1)^\top \{[\mathbf{M}^{q-1}_{C, s}]_{\Gamma_1} \dot{\mathbf{e}}^{q-1}_1 -[(\mathbf{D}_{s}^{q-1})^\top]_{\Gamma_1} {\mathbf{e}}^q_2\} + (\dual{\mathbf{u}}^{p-1}_2)^\top[\mathbf{\Psi}^{q-1}_{s, \partial}]_{\Gamma_2}^{\Gamma_2} \mathbf{y}^{q-1}_1.
\end{aligned}
\end{equation*}
Given definitions \eqref{eq:alg_til_y_dual}, \eqref{eq:dofs_y1}, the statement is proven. 
\end{proof}

\revOne{As in Remark \ref{rmk:til_y_PHDAE}, the strong imposition in System \eqref{eq:discr_weak_dualPH} of the boundary condition leads to a descriptor system with power balance 
\begin{equation}\label{eq:Hdual_mult}
    \diff{H^q_h}{t} = (\mathbf{u}^{q-1}_1)^\top \widetilde{\mathbf{y}}^{q-1} + (\dual{\mathbf{u}}_2^{p-1})^\top {\mathbf{y}}^{p-1}_1,
\end{equation}
where $\widetilde{\mathbf{y}}^q$ corresponds to
\begin{equation*}
\begin{aligned}
    \mathbf{y}^{p-1}_1 =  [\mathbf{\Psi}^{q-1}_{s, \partial}]^{\Gamma_2}_{\Gamma_2} \mathbf{y}^{q-1}_1.
\end{aligned}
\end{equation*}}
}

\subsubsection{Recovering the power balance}
\revTwo{For the case of uniform boundary conditions, it is sufficient to consider only one of the two mixed discretization Eqs. \eqref{eq:alg_primalPH} and \eqref{eq:alg_dualPH} to obtain the preservation of the discrete power balance (as in the Partitioned Finite Element method \cite{cardoso2020pfem}). For example in the case in which $\Gamma_1 =\partial M, \; \Gamma_2 = \emptyset$, then the energy rate reads
\begin{equation}
    \diff{\dual{H}_h^p}{t} = (\dual{\mathbf{y}}^{p-1}_2)^\top\mathbf{\Psi}^{q-1}_{s, \partial} \mathbf{u}^{q-1}_1.
\end{equation}
Conversely, if $\Gamma_1 =\emptyset, \; \Gamma_2 = \partial M$, then the energy rate reads
\begin{equation}
    \diff{{H}_h^q}{t} = (\dual{\mathbf{u}}^{p-1}_2)^\top \mathbf{\Psi}^{q-1}_{s, \partial} \mathbf{y}^{q-1}_1.
\end{equation}
However, in this general case of mixed boundary conditions, both systems \eqref{eq:alg_primalPH} and \eqref{eq:alg_dualPH} are needed in order to recover a discrete Stokes-Dirac structure and its associated power balance.
}
\revOne{The right hand side of the continuous power balance} \eqref{eq:H_dot} written in terms of efforts can be expressed as
\begin{equation}
    P = \dualpr[M]{e^{q-1}_1}{\dual{C}^p\partial_t \dual{e}^p_1} + \dualpr[M]{\dual{e}^{p-1}_2}{E^q\partial_t e^q_2}.
\end{equation}
Its discrete version is then given by 
\begin{equation}
    P_h = \dualpr[M]{e^{q-1}_{1, h}}{\dual{C}^p\partial_t \dual{e}^p_{1, h}} + \dualpr[M]{\dual{e}^{p-1}_{2, h}}{E^q\partial_t e^q_{2, h}} = \mathbf{e}^{q-1}_1 \mathbf{L}_{\dual{C},s}^p \dot{\dual{\mathbf{e}}}^p_1 + \dual{\mathbf{e}}^{p-1}_2 \mathbf{L}_{E,s}^q \dot{{\mathbf{e}}}^q_2.
\end{equation}
where $[\mathbf{L}_{\dual{C}, s}^p]_{i}^j := \dualpr[M]{\phi^{q-1}_{s, i}}{\dual{C}^p \dual{\phi}^p_{s, j}}$ and $[\mathbf{L}_{\dual{E}, s}^p]_{i}^j := \dualpr[M]{\phi^{q-1}_{s, i}}{\dual{C}^p \dual{\phi}^p_{s, j}}$. The dual field formulation is such to preserve the power balance at the discrete level, under some regularity assumption for the physical coefficients. This is due to the fact that each mixed system \eqref{eq:discr_weak_primalPH} and \eqref{eq:discr_weak_dualPH} contains an equation (the one belonging to the Stokes-Dirac structure) that is strongly verified, leading to a discrete conservation law.

\begin{proposition}\label{pr:discr_Hdot}
If the tensors $\dual{C}^p, \; E^q$ verify the following regularity assumption
\begin{equation}\label{eq:reg_C}
    \dual{C}^p: H\dual{\Omega}^p(M) \rightarrow H\dual{\Omega}^p(M), \qquad E^q: H\Omega^q(M) \rightarrow H\Omega^q(M),
\end{equation}
then the discrete power satisfies
\begin{equation}
    P_h = \dualpr[\partial M]{\dual{e}_{\partial, h}^{p-1}}{f_{\partial, h}^{q-1}} = \dual{\mathbf{e}}^{p-1}_\partial \mathbf{\Psi}^{q-1}_{s, \partial} \mathbf{f}^{q-1}_\partial.
\end{equation}
\end{proposition}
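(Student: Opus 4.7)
The plan is to reduce each of the two volume duality pairings in $P_h$ to a boundary term by invoking the strong forms of the key dynamical equations from the primal and dual discrete port-Hamiltonian systems, and then applying the discrete integration by parts formula \eqref{eq:int_byparts_d_disc} once.

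First I would upgrade the first equation of the discrete primal system \eqref{eq:discr_weak_primalPH} and the second equation of the discrete dual system \eqref{eq:discr_weak_dualPH} from their weak (projection) form to pointwise identities in the discrete spaces. Because the trimmed polynomial family forms a subcomplex of the de Rham complex \cite[Lemma 3.8]{arnold2006acta}, the exterior derivatives $(-1)^p \d \dual{e}^{p-1}_{2,h}$ and $-\d e^{q-1}_{1,h}$ already lie in $\dual{\mathcal{V}}^p_{s,h}$ and $\mathcal{V}^q_{s,h}$ respectively. The regularity assumption \eqref{eq:reg_C} places $\dual{C}^p \partial_t \dual{e}^p_{1,h}$ and $E^q \partial_t e^q_{2,h}$ in these same discrete subspaces, so non-degeneracy of the $L^2$ inner product on a finite-dimensional space yields the strong identities
\begin{equation*}
\dual{C}^p \partial_t \dual{e}^p_{1,h} = (-1)^p \d \dual{e}^{p-1}_{2,h}, \qquad E^q \partial_t e^q_{2,h} = -\d e^{q-1}_{1,h},
\end{equation*}
exactly as in the argument inside Proposition \ref{pr:discr_powbal}.

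Substituting these identities into the definition of $P_h$ produces an expression involving only volume duality pairings of discrete forms with exterior derivatives of discrete forms, for which the degree balance $(p-1)+(q-1)+1=n$ is satisfied. The discrete integration by parts formula \eqref{eq:int_byparts_d_disc} can then be applied (with $\mu_h = \dual{e}^{p-1}_{2,h}$ and $\lambda_h = e^{q-1}_{1,h}$, after one use of the alternating property of the wedge product to align the pairings) to combine the two volume terms into a single boundary duality pairing. The combinatorial signs collapse exactly as in the proof of Proposition \ref{pr:discr_powbal} because of Proposition \ref{pr:parity_a0_a1} and the relation $p+q=n+1$. Identifying the boundary traces with the boundary variables of the discrete primal and dual Stokes-Dirac structures via $\dual{e}^{p-1}_{\partial,h} = (-1)^p \tr \dual{e}^{p-1}_{2,h}$ from \eqref{eq:discr_primal_SD} and $f^{q-1}_{\partial,h} = \tr e^{q-1}_{1,h}$ from \eqref{eq:discr_dual_SD} then gives the claimed boundary pairing $P_h = \dualpr[\partial M]{\dual{e}^{p-1}_{\partial,h}}{f^{q-1}_{\partial,h}}$, and the algebraic formula follows immediately from \eqref{eq:alg_wedge_bd}.

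The main obstacle is the first step. The regularity hypothesis \eqref{eq:reg_C} asserts continuity of $\dual{C}^p$ and $E^q$ as mappings between $H$-Sobolev spaces of forms, which does not in strict generality guarantee that these operators preserve the trimmed polynomial subspaces; some additional compatibility between the material operators and the mesh (for instance, piecewise constant coefficients on each simplex of $\mathcal{T}_h$) is implicit in the step that promotes the $L^2$-projection identity to a pointwise identity on $\dual{\mathcal{V}}^p_{s,h}$ and $\mathcal{V}^q_{s,h}$. Once this passage from weak to strong form is granted, the remainder of the proof is a mechanical integration by parts together with routine sign bookkeeping.
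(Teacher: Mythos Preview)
Your proposal is correct and follows essentially the same route as the paper's own proof: invoke the subcomplex property together with the regularity assumption to promote the first line of \eqref{eq:discr_weak_primalPH} and the second line of \eqref{eq:discr_weak_dualPH} to pointwise identities, substitute into $P_h$, and apply the discrete integration by parts \eqref{eq:int_byparts_d_disc} to collapse the two volume pairings into the boundary term. Your observation that the hypothesis \eqref{eq:reg_C} does not strictly force $\dual{C}^p,\,E^q$ to preserve the trimmed polynomial subspaces is accurate; the paper glosses over this same point, so your caveat is a sharpening rather than a deviation.
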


\begin{proof}
Since the tensors satisfy the regularity assumption, the subcomplex property \cite[Lemma 3.8]{arnold2006acta} to the first line of \eqref{eq:discr_weak_primalPH} and the second line of \eqref{eq:discr_weak_dualPH} provides
\begin{equation*}
\dual{C}^p\partial_t\dual{e}^p_{1, h} = -(-1)^r\d \dual{e}^{p-1}_{2, h}, \qquad E^q\partial_t{e}^q_{2, h} = -\d e^{q-1}_{1, h}.
\end{equation*}
Taking the duality product against $e^{q-1}_{1, h}$ and $\dual{e}_{2, h}^{p-1}$ provides
\begin{equation*}
P_h= -\dualpr[M]{e^{q-1}_{1, h}}{(-1)^r\d \dual{e}^{p-1}_{2, h}}-\dualpr[M]{\dual{e}_{2, h}^{p-1}}{\d e^{q-1}_{1, h}}.
\end{equation*}
The application of the discrete Stokes Theorem \eqref{eq:int_byparts_d_disc} then gives
\begin{equation*}
P_h = \dualpr[\partial M]{(-1)^p \dual{e}^{p-1}_{2, h}}{e^{q-1}_{1, h}}= \dualpr[\partial M]{e_{\partial, h}^{p-1}}{f_{\partial, h}^{q-1}},
\end{equation*}
leading to the proof of the statement.
\end{proof}

\section{Time discretization}\label{sec:time_discr}

To ensure discrete conservation of energy, implicit Runge-Kutta methods based on Gauss-Legendre collocation points can be used \cite{sanzserna1992}. These methods are also the only collocation schemes that lead to an exact discrete energy balance in the linear case \cite{kotyczka2019discrete}. The implicit midpoint method is here used to illustrate the time discretization. \\

Consider a total simulation time $T_{\mathrm{end}}$ and a equidistant splitting given by the time step $\Delta t = T_{\mathrm{end}}/N_t$, where $N_t$ is the total number of simulation instants. The evaluation of a generic variable $\mathbf{x}$ at the time instant $t_n = n\Delta t$ is denoted by $\mathbf{x}_n$. The midpoint method is the simplest method in the class of collocation methods known as Gauss-Legendre methods. It applies to systems of the form
\begin{equation}
    \dot{\mathbf{x}} = \mathbf{f}(\mathbf{x}, t), \qquad \mathbf{x} \in \bbR^d, \qquad \mathbf{f}: \bbR^{d} \times [0, T_{\mathrm{end}}] \rightarrow \bbR^d.
\end{equation}
This method is a one stage method, i.e. only uses the information at the previous time, and seeks for the solution of the implicit equation
\begin{equation}
    \mathbf{x}_{n+1} = \mathbf{x}_{n} + \Delta t \; \mathbf{f}\left(\mathbf{x}_{n+1/2},  t_{n+1/2}\right),
\end{equation}
where $\mathbf{x}_{n+1/2}:= \frac{\mathbf{x}_{n+1} + \mathbf{x}_{n}}{2}$ and $t_{n+1/2}:= t_n + \frac{\Delta t}{2}$. The application of the implicit midpoint method to system \eqref{eq:alg_primalPH} with direct assignment of the boundary degrees of freedom leads to the algebraic system
\begin{equation}\label{eq:timediscr_primal}
    \dual{\mathbf{A}}^{p}_{\mathrm{bc}}
    \begin{pmatrix}
    \dual{\mathbf{e}}^p_{1, n+1} \\ [\dual{\mathbf{e}}^{p-1}_{2, n+1}]_{I\cup \Gamma_1}
    \end{pmatrix}
    = \dual{\mathbf{b}}^{p}_{\mathrm{bc}}.
\end{equation}
The matrix $\dual{\mathbf{A}}^{p}_{\mathrm{bc}}$ corresponds to 
\begin{equation}
\dual{\mathbf{A}}^{p}_{\mathrm{bc}} =
    \begin{bmatrix}
        \mathbf{M}^p_{\dual{C}, s} & -\frac{1}{2}(-1)^p\Delta t [\mathbf{D}^{p-1}_s]^{I\cup \Gamma_1} \\
        \frac{1}{2}(-1)^p\Delta t [(\mathbf{D}_{s}^{p-1})^\top]_{I\cup \Gamma_1} & [\mathbf{M}^{p-1}_{\dual{E}, s}]_{I\cup \Gamma_1}^{I\cup \Gamma_1}
    \end{bmatrix}.
\end{equation}
The $\dual{\mathbf{b}}^{p}_{\mathrm{bc}}$ vector incorporates the forcing due to the previous time step and the boundary conditions
\begin{equation}
\begin{aligned}
\dual{\mathbf{b}}^{p}_{\mathrm{bc}} = \begin{bmatrix}
        \mathbf{M}^p_{\dual{C}, s} & \frac{1}{2}(-1)^p\Delta t[\mathbf{D}^{p-1}_s]^{I\cup \Gamma_1} \\
        -\frac{1}{2}(-1)^p\Delta t [(\mathbf{D}_{s}^{p-1})^\top]_{I\cup \Gamma_1} & [\mathbf{M}^{p-1}_{\dual{E}, s}]_{I\cup \Gamma_1}^{I\cup \Gamma_1}
    \end{bmatrix}\begin{pmatrix}
    \dual{\mathbf{e}}^p_{1, n} \\
    [\dual{\mathbf{e}}^{p-1}_{2, n}]_{I\cup \Gamma_1}
    \end{pmatrix} \\
    - (-1)^p
    \begin{bmatrix}
         \mathbf{0} \\
        [\mathbf{M}^{p-1}_{\dual{E}, s}]_{I\cup \Gamma_1}^{\Gamma_2}
    \end{bmatrix}(\dual{\mathbf{u}}^{p-1}_{2, n+1} - \dual{\mathbf{u}}^{p-1}_{2, n})
    + \Delta t \begin{bmatrix}
        [\mathbf{D}^{p-1}_s]^{\Gamma_2} & \mathbf{0}\\
        \mathbf{0} & (-1)^p[\mathbf{B}^{q-1}_{s}]_{I \cup \Gamma_1}
    \end{bmatrix}
    \begin{pmatrix}
        \dual{\mathbf{u}}^{p-1}_{2, n+1/2} \\
        \mathbf{u}^{q-1}_{1, n+1/2}
    \end{pmatrix}.
\end{aligned}
\end{equation}

The implicit midpoint method, once applied to \eqref{eq:alg_dualPH}, leads to the system
\begin{equation}\label{eq:timediscr_dual}
    \mathbf{A}^{q}_{\mathrm{bc}}
    \begin{pmatrix}
    [\mathbf{e}^{q-1}_{1, n+1}]_{I\cup \Gamma_2} \\ \dual{\mathbf{e}}^q_{2, n+1}
    \end{pmatrix}
    = \mathbf{b}^{q}_{\mathrm{bc}},
\end{equation}
with
\begin{equation}
\mathbf{A}^{q}_{\mathrm{bc}} =
    \begin{bmatrix}
        [\mathbf{M}^{q-1}_{C, s}]^{I\cup \Gamma_2}_{I\cup \Gamma_2} & -\frac{1}{2}\Delta t [(\mathbf{D}^{q-1}_s)^\top]_{I\cup \Gamma_2} \\
        \frac{1}{2}\Delta t [\mathbf{D}_{s}^{q-1}]^{I\cup \Gamma_2} & \mathbf{M}^{q}_{E, s}
    \end{bmatrix}
\end{equation}
and 
\begin{equation}
\begin{aligned}
\mathbf{b}^{q}_{\mathrm{bc}} = 
\begin{bmatrix}
        [\mathbf{M}^{q-1}_{\dual{C}, s}]^{I\cup \Gamma_2}_{I\cup \Gamma_2} & \frac{1}{2}\Delta t [(\mathbf{D}^{q-1}_s)^\top]_{I\cup \Gamma_2} \\
        -\frac{1}{2}\Delta t [\mathbf{D}_{s}^{q-1}]^{I\cup \Gamma_2} & \mathbf{M}^{q}_{{C}, s}
\end{bmatrix}
    \begin{pmatrix}
    [\mathbf{e}^{q-1}_{1, n}]_{I\cup \Gamma_2} \\ {\mathbf{e}}^q_{2, n}
    \end{pmatrix} \\
    -
    \begin{bmatrix}
        [\mathbf{M}^{q-1}_{\dual{C}, s}]_{I\cup \Gamma_2}^{\Gamma_1} \\
        \mathbf{0} \\
    \end{bmatrix}(\mathbf{u}^{q-1}_{1, n+1} - \mathbf{u}^{q-1}_{1, n}) 
    + \Delta t \begin{bmatrix}
        \mathbf{0} & (-1)^{(p-1)(q-1)}[\mathbf{B}^{p-1}_{s}]_{I \cup \Gamma_2} \\
        -[\mathbf{D}^{q-1}_s]^{\Gamma_1} & \mathbf{0}\\
    \end{bmatrix}
    \begin{pmatrix}
        \mathbf{u}^{q-1}_{1, n+1/2} \\
        \dual{\mathbf{u}}^{p-1}_{2, n+1/2}
    \end{pmatrix}.
\end{aligned}
\end{equation}

\begin{proposition}\label{pr:discrtime_energyrate}
Given the recursions implemented by systems \eqref{eq:timediscr_primal} and \eqref{eq:timediscr_dual}, the following time discrete energy rate holds
\begin{equation}
\begin{aligned}
    \frac{\dual{H}^{p}_{h, n+1} - \dual{H}^{p}_{h, n}}{\Delta t} &= \dual{P}_{h, n+1/2}^{p} := (\dual{\mathbf{u}}^{p-1}_{2, n+1/2})^\top \widetilde{\mathbf{y}}^{p-1}_{n+1/2} +  (\dual{\mathbf{y}}^{p-1}_{2, n+1/2})^\top [\mathbf{\Psi}^{q-1}_{s}]_{\Gamma_1}^{\Gamma_1} \mathbf{u}^{q-1}_{1, n+1/2}, \\
    \frac{H^{q}_{h, n+1} - H^{q}_{h, n}}{\Delta t} 
    &= P_{h, n+1/2}^{q} := (\mathbf{u}^{q-1}_{1, n+1/2})^\top \widetilde{\mathbf{y}}^{q-1}_{n+1/2} +  (\dual{\mathbf{u}}^{p-1}_{2, n+1/2})^\top [\mathbf{\Psi}^{q-1}_{s}]_{\Gamma_2}^{\Gamma_2} \mathbf{y}^p_{n+1/2},
\end{aligned}
\end{equation}
where $\widetilde{\mathbf{y}}^{q-1}_{n+1/2}$ and $\widetilde{\mathbf{y}}^{p-1}_{n+1/2}$ correspond a midpoint discretization of Eqs. \eqref{eq:alg_til_y_primal} and \eqref{eq:alg_til_y_dual} respectively
\begin{equation}
\begin{aligned}
\widetilde{\mathbf{y}}^{p-1}_{n+1/2} &:= (-1)^p [\mathbf{M}^{p-1}_{\dual{E}, s}]_{\Gamma_2} \frac{\dual{\mathbf{e}}^{p-1}_{2, n+1}-\dual{\mathbf{e}}^{p-1}_{2, n}}{\Delta t}  +[(\mathbf{D}_{s}^{p-1})^\top]_{\Gamma_2} \dual{\mathbf{e}}^p_{1, n+1/2}, \\
\widetilde{\mathbf{y}}^{q-1}_{n+1/2} &:= [\mathbf{M}^{q-1}_{{C}, s}]_{\Gamma_1} \frac{\mathbf{e}^{q-1}_{1, n+1} - \mathbf{e}^{q-1}_{1, n}}{\Delta t} -[(\mathbf{D}_{s}^{q-1})^\top]_{\Gamma_1} {\mathbf{e}}^q_{2, n+1/2}.
\end{aligned}
\end{equation}
\end{proposition}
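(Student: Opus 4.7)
The plan is to mirror, step by step, the continuous-time arguments of Propositions \ref{pr:discr_dotH_primal} and \ref{pr:discr_dotH_dual}, with the time derivative replaced by the forward difference $(\mathbf{x}_{n+1}-\mathbf{x}_n)/\Delta t$ and every state evaluated at the midpoint $n+1/2$. The key algebraic identity is that, for a symmetric matrix $\mathbf{M}$ and any two vectors $\mathbf{x}_n,\mathbf{x}_{n+1}$,
\begin{equation*}
\tfrac{1}{2}\mathbf{x}_{n+1}^\top \mathbf{M}\mathbf{x}_{n+1}-\tfrac{1}{2}\mathbf{x}_{n}^\top \mathbf{M}\mathbf{x}_{n} = \mathbf{x}_{n+1/2}^\top \mathbf{M}(\mathbf{x}_{n+1}-\mathbf{x}_n),
\end{equation*}
which allows the discrete energy difference to be written as a midpoint quadratic form in the increments.

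First, for the primal case, applying this identity to the two quadratic terms of $\dual{H}^p_h$ in \eqref{eq:discr_H_primal} yields
\begin{equation*}
\frac{\dual{H}^p_{h,n+1}-\dual{H}^p_{h,n}}{\Delta t} = (\dual{\mathbf{e}}^p_{1,n+1/2})^\top \mathbf{M}^p_{\dual{C},s}\frac{\dual{\mathbf{e}}^p_{1,n+1}-\dual{\mathbf{e}}^p_{1,n}}{\Delta t} + (\dual{\mathbf{e}}^{p-1}_{2,n+1/2})^\top \mathbf{M}^{p-1}_{\dual{E},s}\frac{\dual{\mathbf{e}}^{p-1}_{2,n+1}-\dual{\mathbf{e}}^{p-1}_{2,n}}{\Delta t}.
\end{equation*}
The second factor is split according to the partition $\{I\cup\Gamma_1,\Gamma_2\}$; on $I\cup\Gamma_1$ we substitute using the block rows of the midpoint system \eqref{eq:timediscr_primal} (which is precisely \eqref{eq:alg_primalPH} evaluated at $t_{n+1/2}$), while the $\Gamma_2$ contribution is kept untouched since those degrees of freedom are strongly imposed via $\dual{\mathbf{u}}^{p-1}_{2,n+1/2}$. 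Combining the two resulting cross-terms involving $\mathbf{D}^{p-1}_s$ reproduces exactly the cancellation used in the proof of Proposition \ref{pr:discr_dotH_primal}, leaving a genuine boundary term on $\Gamma_2$ (which, together with the $[\mathbf{M}^{p-1}_{\dual{E},s}]_{\Gamma_2}$ contribution, assembles into $\widetilde{\mathbf{y}}^{p-1}_{n+1/2}$) and a $\Gamma_1$ control term built from $[\mathbf{B}^{q-1}_s]_{I\cup\Gamma_1}^{\Gamma_1}=[(\mathbf{T}^{p-1}_s)^\top]_{I\cup\Gamma_1}[\mathbf{\Psi}^{q-1}_{s,\partial}]^{\Gamma_1}$, which pairs with $\dual{\mathbf{y}}^{p-1}_{2,n+1/2}=[\dual{\mathbf{e}}^{p-1}_{2,n+1/2}]_{\Gamma_1}$ via $[\mathbf{\Psi}^{q-1}_{s,\partial}]_{\Gamma_1}^{\Gamma_1}$.

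The dual case is handled by exactly the same procedure applied to \eqref{eq:discr_H_dual} and \eqref{eq:timediscr_dual}: use the midpoint identity on both quadratic terms, split the first factor according to $\{I\cup\Gamma_2,\Gamma_1\}$, substitute the midpoint recursion in the interior block, cancel the $\mathbf{D}^{q-1}_s$ cross-terms, and recognize the remaining $\Gamma_1$ piece as $\widetilde{\mathbf{y}}^{q-1}_{n+1/2}$ paired with $\mathbf{u}^{q-1}_{1,n+1/2}$, plus the $\Gamma_2$ boundary term with the weight $[\mathbf{\Psi}^{q-1}_{s,\partial}]_{\Gamma_2}^{\Gamma_2}$.

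The only real obstacle is bookkeeping: one must be careful to (i) track the sign factors $(-1)^p$ and $(-1)^{(p-1)(q-1)}$ that arise from \eqref{eq:alg_primalPH}--\eqref{eq:alg_dualPH}, (ii) respect the splitting of the mass matrices so that the strongly imposed boundary DOFs at $\Gamma_2$ (resp.\ $\Gamma_1$) are \emph{not} eliminated via the dynamics, since they satisfy the algebraic constraint rather than the differential one, and (iii) observe that the midpoint values $\dual{\mathbf{u}}^{p-1}_{2,n+1/2}=(-1)^p[\dual{\mathbf{e}}^{p-1}_{2,n+1/2}]_{\Gamma_2}$ and $\mathbf{u}^{q-1}_{1,n+1/2}=[\mathbf{e}^{q-1}_{1,n+1/2}]_{\Gamma_1}$ are consistent because both $\dual{\mathbf{e}}^{p-1}_{2}$ and $\mathbf{e}^{q-1}_{1}$ are affine in time between grid points when the inputs are discretized at the midpoint, so averaging commutes with restriction to the boundary index set. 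Once these three points are in place, the computation is a direct algebraic mimic of the continuous proofs.
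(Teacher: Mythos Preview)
Your proof is correct and follows essentially the same approach as the paper: the paper's proof simply states that one multiplies systems \eqref{eq:timediscr_primal} and \eqref{eq:timediscr_dual} on the left by the midpoint state vectors, yielding the implicit midpoint discretization of Propositions \ref{pr:discr_dotH_primal} and \ref{pr:discr_dotH_dual}, which is exactly what you spell out in detail via the polarization identity and the interior/boundary splitting. Your point (iii) is slightly overcomplicated---since the midpoint state is \emph{defined} as the average of the endpoint states and restriction to a boundary index set is linear, the consistency $\dual{\mathbf{u}}^{p-1}_{2,n+1/2}=(-1)^p[\dual{\mathbf{e}}^{p-1}_{2,n+1/2}]_{\Gamma_2}$ is immediate without any affinity argument---but this does not affect correctness.
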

\begin{proof}
The time discrete energy rates are obtained by vector multiplication of Systems \eqref{eq:timediscr_primal} and \eqref{eq:timediscr_dual} by $\begin{pmatrix}
    \dual{\mathbf{e}}^p_{1, n+1/2} \\ [\dual{\mathbf{e}}^{p-1}_{2, n+1/2}]_{I\cup \Gamma_1}
\end{pmatrix}$ and $\begin{pmatrix}
    [\mathbf{e}^{q-1}_{1, n+1/2}]_{I\cup \Gamma_2} \\ {\mathbf{e}}^q_{2, n+1/2}
    \end{pmatrix}$ respectively. This computation provides the implicit midpoint discretization of Propositions \ref{pr:discr_dotH_primal} and \ref{pr:discr_dotH_dual}.
\end{proof}

The algebraic systems \eqref{eq:timediscr_primal} and \eqref{eq:timediscr_dual} satisfy a time discrete power balance, together with time discrete energy rates.
\begin{proposition}\label{pr:discrtime_Hdot}
Given the recursions implemented by systems \eqref{eq:timediscr_primal} and \eqref{eq:timediscr_dual}, the regularity assumption reported in Eq. \eqref{eq:reg_C}, and the following definition of time discrete power
\begin{equation*}
    P_{h, n+1/2} := \frac{1}{\Delta t}\dualpr[M]{e^{q-1}_{1, h, n+1/2}}{\dual{C}^p (\dual{e}^p_{1, h, n+1}- \dual{e}^p_{h, n})} + \frac{1}{\Delta t}\dualpr[M]{\dual{e}^{p-1}_{2, h, n+1/2}}{E^q ({e}^q_{2, h, n+1}-{e}^q_{2, h, n})},
\end{equation*}
it holds
\begin{equation}
    P_{h, n+1/2} = (\dual{\mathbf{e}}_{\partial, n+1/2}^{p-1})^\top \mathbf{\Psi}^{q-1}_{s, \partial}\mathbf{f}_{\partial, n+1/2}^{q-1}.
\end{equation}
\end{proposition}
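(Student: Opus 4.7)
The plan is to transplant the argument of Proposition \ref{pr:discr_Hdot} from the continuous-time setting to the midpoint-discretized setting, exploiting the fact that implicit midpoint applied to a linear system preserves bilinear invariants exactly.

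First I would extract, from the algebraic systems \eqref{eq:timediscr_primal} and \eqref{eq:timediscr_dual}, the particular block-rows that encode the canonical Stokes-Dirac relations (the first equation in \eqref{eq:discr_weak_primalPH} and the second equation in \eqref{eq:discr_weak_dualPH}). Under the regularity assumption \eqref{eq:reg_C} together with the sub-complex property, these rows are strongly satisfied, which yields the pointwise-in-finite-element identities
\begin{equation*}
\dual{C}^p\,\frac{\dual{e}^p_{1,h,n+1}-\dual{e}^p_{1,h,n}}{\Delta t}=-(-1)^r\,\d\,\dual{e}^{p-1}_{2,h,n+1/2},
\qquad
E^q\,\frac{e^q_{2,h,n+1}-e^q_{2,h,n}}{\Delta t}=-\,\d\,e^{q-1}_{1,h,n+1/2},
\end{equation*}
where I have used that the discrete exterior derivative commutes with the linear midpoint average of its argument.

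Next I would pair the first identity with $e^{q-1}_{1,h,n+1/2}$ and the second with $\dual{e}^{p-1}_{2,h,n+1/2}$ under the domain duality product $\dualpr[M]{\cdot}{\cdot}$, and add them. By the very definition of $P_{h,n+1/2}$, the left-hand side is exactly $P_{h,n+1/2}$, while the right-hand side is
\begin{equation*}
-\dualpr[M]{e^{q-1}_{1,h,n+1/2}}{(-1)^r\d\,\dual{e}^{p-1}_{2,h,n+1/2}}-\dualpr[M]{\dual{e}^{p-1}_{2,h,n+1/2}}{\d\,e^{q-1}_{1,h,n+1/2}}.
\end{equation*}
This is precisely the right-hand side that appeared in the proof of Proposition \ref{pr:discr_Hdot}, only evaluated at the midpoint $t_{n+1/2}$.

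Finally I would apply the discrete integration-by-parts formula \eqref{eq:int_byparts_d_disc} to combine the two terms into the single boundary duality pairing $\dualpr[\partial M]{\dual{e}^{p-1}_{\partial,h,n+1/2}}{f^{q-1}_{\partial,h,n+1/2}}$, and then rewrite it algebraically using \eqref{eq:alg_wedge_bd} as $(\dual{\mathbf{e}}^{p-1}_{\partial,n+1/2})^\top\mathbf{\Psi}^{q-1}_{s,\partial}\mathbf{f}^{q-1}_{\partial,n+1/2}$. The only delicate point, and the step I expect to need the most care, is verifying that the midpoint of a product of midpoint states behaves correctly for the bilinear duality pairings with the constitutive tensors $\dual{C}^p$ and $E^q$; because the scheme is linear in the state and the tensors are time-independent, no cross-terms appear and the identity $\dual{C}^p\dual{e}^p_{1,h,n+1/2}=\tfrac{1}{2}\dual{C}^p(\dual{e}^p_{1,h,n+1}+\dual{e}^p_{1,h,n})$ closes the argument, exactly as in Proposition \ref{pr:discrtime_energyrate}.
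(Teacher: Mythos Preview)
Your proposal is correct and follows essentially the same approach as the paper. The paper's own proof is a single sentence stating that the result is the implicit midpoint discretization of Proposition~\ref{pr:discr_Hdot}; you have simply unpacked that sentence explicitly, extracting the strongly satisfied relations at the midpoint, pairing with the midpoint efforts, and applying the discrete integration-by-parts formula \eqref{eq:int_byparts_d_disc}.
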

\begin{proof}
The time discrete power balance corresponds to the implicit midpoint time discretization of Proposition \ref{pr:discr_Hdot}.
\end{proof}

\section{Numerical experiments}\label{sec:num_exp}

In this section, the dual field discretization methodology proposed in this work is tested for the wave and Maxwell equations in a box-shaped three-dimensional domain
$$M = \{ (x,y,z) \in [0, 1]\times[0, 1/2]\times[0, 1/2] \}.$$ 
The boundary sub-partitions are selected to be
\begin{equation*}
    \Gamma_1 = \{(x,y,z) \vert \; x=0 \cup y=0 \cup z=0\}, \qquad \Gamma_2 = \{(x, y, z) \vert \; x=1 \cup y=1/2 \cup z=1/2 \}.
\end{equation*}

Since we want to rely on existing and well-established librairies like \fenics and \firedrake, the equations need to be translate into vector calculus operations. By introducing the musical isomorphism, given by the flat $\flat$ and the sharp operator $\sharp$ (cf. \ref{app:vec_ext}), and the isomorphism $\beta$ converting vector fields in $n-1$ forms, the commuting diagram in Fig. \ref{fig:cd_ext_vec}, that provides the link between the de Rham complex and the standard operators and Sobolev space from vector calculus, is obtained.
\begin{figure}[h]
\centering
\begin{tikzcd}
H\Omega^0(M) \arrow[r, "\d"] \arrow[d, leftrightarrow, "Id"]
& H\Omega^{1}(M) \arrow[r, "\d"] \arrow[d, "\sharp", xshift=10pt] & H\Omega^2(M) \arrow[r, "\d"] \arrow[d, "\beta^{-1}", xshift=10pt]
& H\Omega^{3}(M) \arrow[d, "\star", xshift=10pt]  \\
H^1(M) \arrow[r, "\grad"]
& H^{\curl}(M) \arrow[r, "\curl"] \arrow[u, "\flat"] & H^{\div}(M) \arrow[r, "\div"] \arrow[u, "\beta"]
& L^2(M) \arrow[u, "\star^{-1}"]
\end{tikzcd}  
\caption{Equivalence of vector and exterior calculus Sobolev spaces.}
\label{fig:cd_ext_vec}
\end{figure}

Since the manifold is a subset of the Euclidean space, the metric tensor is the identity $g_{ij}~=~\delta_{ij}$. The finite element arising from the trimmed polynomial family on the computational mesh $\mathcal{T}_h$ are then equivalent to the well known continuous Galerkin (or Lagrange) elements $\mathcal{P}^-_s\Omega^0(\mathcal{T}_h) \equiv \mathrm{CG}_s(\mathcal{T}_h)$, Nédélec of the first kind $\mathcal{P}^-_s\Omega^1(\mathcal{T}_h) \equiv \mathrm{NED}_s^1(\mathcal{T}_h)$, Raviart-Thomas $\mathcal{P}^-_s\Omega^2(\mathcal{T}_h) \equiv \mathrm{RT}_s(\mathcal{T}_h)$ and discontinuous Galerkin $\mathcal{P}^-_s\Omega^3(\mathcal{T}_h) \equiv \mathrm{DG}_{s-1}(\mathcal{T}_h)$, as illustrated in  Figure \ref{fig:fe_ext_vec}.

\begin{figure}[h]
\centering
\begin{tikzcd}
H^1(M) \arrow[r, "\grad"] \arrow[d, "\Pi_{s, h}^{-, 0}"]
& H^{\curl}(M) \arrow[r, "\curl"] \arrow[d, "\Pi_{s, h}^{-, 1}"] & H^{\div}(M) \arrow[r, "\div"] \arrow[d, "\Pi_{s, h}^{-, 2}"]
& L^2(M) \arrow[d, "\Pi_{s, h}^{-, 3}"]  \\
\mathrm{CG}_s(\mathcal{T}_h) \arrow[r, "\grad"] 
& \mathrm{NED}_s^1(\mathcal{T}_h) \arrow[r, "\curl"] & \mathrm{RT}_s(\mathcal{T}_h) \arrow[r, "\div"]
& \mathrm{DG}_{s-1}(\mathcal{T}_h) 
\end{tikzcd}  
\caption{Equivalence between finite element differential forms and classical elements.}
\label{fig:fe_ext_vec}
\end{figure}

The finite element library \firedrake \cite{rathgeber2017firedrake} is used for the numerical investigation.

\begin{remark}
As argued in \cite{benner2015timebc}, the method of manufactured solution is not suited for boundary control problems. This is due to the fact that for finer discretizations the volume terms due to the forcing will dominate over the boundary terms. This is the reason why the numerical tests are set up considering an eigensolution rather than a manufactured one induced by a forcing.
\end{remark}

\subsection{The acoustic wave equation in $3D$}

The acoustic wave equation corresponds to the case $p=3$ and $q=1$. \revTwo{Using the same notation as in Sec. \ref{sec:ex_wave},} the energy variables are the top-form $\dual{v}^3:=\dual{\alpha}^3$ (corresponding to the pressure) and the one-form $\sigma^1:=\beta^{1}$ (corresponding to the linear momentum). If the physical coefficients are normalized to one (this can be easily achieved by re-scaling the time with respect to the speed of sound), the Hamiltonian is given by
\begin{equation}
    H(\dual{v}^3, \sigma^1) = \frac{1}{2} \int_M \dual{v}^3 \wedge \star \dual{v}^3 + \sigma^1 \wedge \star \sigma^1,
\end{equation}
with its variational derivatives given by
\begin{equation}
v^0:=\delta_{\dual{v}^3} H = \star \dual{v}^3, \qquad  \dual{\sigma}^2:= \delta_{\sigma^1} H = \star \sigma^1,  
\end{equation}
leading to the pH system
\begin{equation}\label{eq:wave_eq}
    \begin{pmatrix}
    \partial_t \dual{v}^3 \\
    \partial_t \sigma^1
    \end{pmatrix} =
    -
    \begin{bmatrix}
    0 & \d \\
    \d & 0 \\
    \end{bmatrix}
    \begin{pmatrix}
     v^0 \\
     \dual{\sigma}^2
    \end{pmatrix}, \qquad 
    \begin{aligned}
    \tr v^0 \vert_{\Gamma_1} &= u^{0}_1, \\
    -\tr \dual{\sigma}^2 \vert_{\Gamma_2} &= \dual{u}^2_2.
 \end{aligned}
\end{equation}

Given the functions
\begin{equation}
    g(x, y, z) = \cos(x) \sin(y) \sin(z), \qquad f(t) = 2 \sin(\sqrt{3} t) + 3 \cos(\sqrt{3} t),
\end{equation}
an exact solution of \eqref{eq:wave_eq} is given by
\begin{equation}\label{eq:wave_exsol}
\begin{aligned}
\dual{v}^3_{\mathrm{ex}} &= \star g \diff{f}{t}, \\    
\sigma^1_{\mathrm{ex}} &= -\d{g} f, 
\end{aligned} \qquad 
\begin{aligned}
v^0_{\mathrm{ex}} &= g \diff{f}{t}, \\
\dual{\sigma}^2_{\mathrm{ex}} &= -\star \d{g} f,
\end{aligned}
\end{equation}
The exact solution provides the appropriate inputs to be fed into the system
\begin{equation}
    u^0_1 = \left. \tr v^0_{\mathrm{ex}} \right\vert_{\Gamma_1}, \qquad u^{2}_2 =  -\tr \dual{\sigma}^2_{\mathrm{ex}} \vert_{\Gamma_2}.
\end{equation}

\revTwo{The employment of the dual field discretization leads to the resolution of two systems:
\begin{itemize}
    \item the primal system \eqref{eq:alg_primalPH} of outer oriented variables $\dual{v}^3$ and $\dual{\sigma}^2$;
    \item the dual system \eqref{eq:alg_dualPH} of inner oriented variables $v^0$ and $\sigma^1$;
\end{itemize}
Each variables is discretized using the associated finite element differential forms:
\begin{itemize}
    \item discontinuous Galerkin elements DG$_{s-1}$ for $\dual{v}^3_h$;
    \item Raviart Thomas elements RT$_s$ for $\dual{\sigma}^2_h$.
    \item continuous Galerkin elements CG$_s$ for $v^0_h$;
    \item Nédélec elements of the first kind NED$_s^1$ for $\sigma^1_h$;
\end{itemize}
}

\subsubsection{Energy Conservation properties}
First the conservation properties of the scheme are verified against the exact solution \eqref{eq:wave_exsol}. The test is performed using $N_{\text{el}}=4$ elements for each side of the box-shaped domain and polynomial degree $s=3$. Concerning the time discretization, the total simulation time $T_{\text{end}}=5$ and the time step is taken to be $\Delta t= \frac{T_{\text{end}}}{200}$. \\

The energy rate conservation, expected by Prop. \ref{pr:discrtime_energyrate}, is reported in Fig. \ref{fig:con_Hdot_wave}. Fig. \ref{fig:con_P_wave} shows the fulfilment of the discrete power balance (Prop. \ref{pr:discrtime_Hdot}), whereas Fig. \ref{fig:err_flow_wave} provides the error between the discrete and exact power flow. The power flow error due to the polynomial interpolation is lower than $10^{-4}$. 
For what concerns the energy behaviour, three different energies are considered
\begin{equation*}
\begin{aligned}
    \dual{H}^{3}_h &= \frac{1}{2} \int_M \dual{v}^3_h \wedge \star \dual{v}^3_h + \dual{\sigma}^2_h \wedge \star \dual{\sigma}^2_h, \\
    H^{1}_h&= \frac{1}{2} \int_M v^0_h \wedge \star v^0_h + \sigma^1_h \wedge \star \sigma_h^1, \\
    \frac{H_{T, h}}{2} &= \frac{1}{2} \int_M v^0_h \wedge \dual{v}^3_h + \dual{\sigma}^2_h \wedge \sigma_h^1. \\
\end{aligned}
\end{equation*}
The first two energies are associated with the mixed discretization \eqref{eq:alg_primalPH} and \eqref{eq:alg_dualPH} respectively, whereas the last one is constructed using the dual field formulation. The errors on the energy rate and variation of energy, shown in Fig. \ref{fig:energy_wave}, are of the order $10^{-5}$, assessing the performance of the dual field formulation. In particular, one can notice that the energy $\frac{H_T}{2}$ stays between $\dual{H}^{3}_h, \; H^{1}_h$ at all times. The variation of energy is also computed using the power flow 
\begin{equation*}
    \Delta H = \int_0^t P_h(\tau) \d\tau, \qquad P_h = \int_M v^0_h \wedge \partial_t \dual{v}^3_h + \dual{\sigma}^2_h \wedge \partial_t \sigma_h^1.
\end{equation*}
As shown in Fig. \ref{fig:deltaH_wave} this quantity is less affected by the error for this particular test case.

\begin{figure}[p]%
\centering
\subfloat[][Conservation law for $\dot{\dual{H}}^{3}_h - \dual{P}_h^{3}$]{%
	\label{fig:con_H32_wave}%
	\includegraphics[width=0.48\columnwidth]{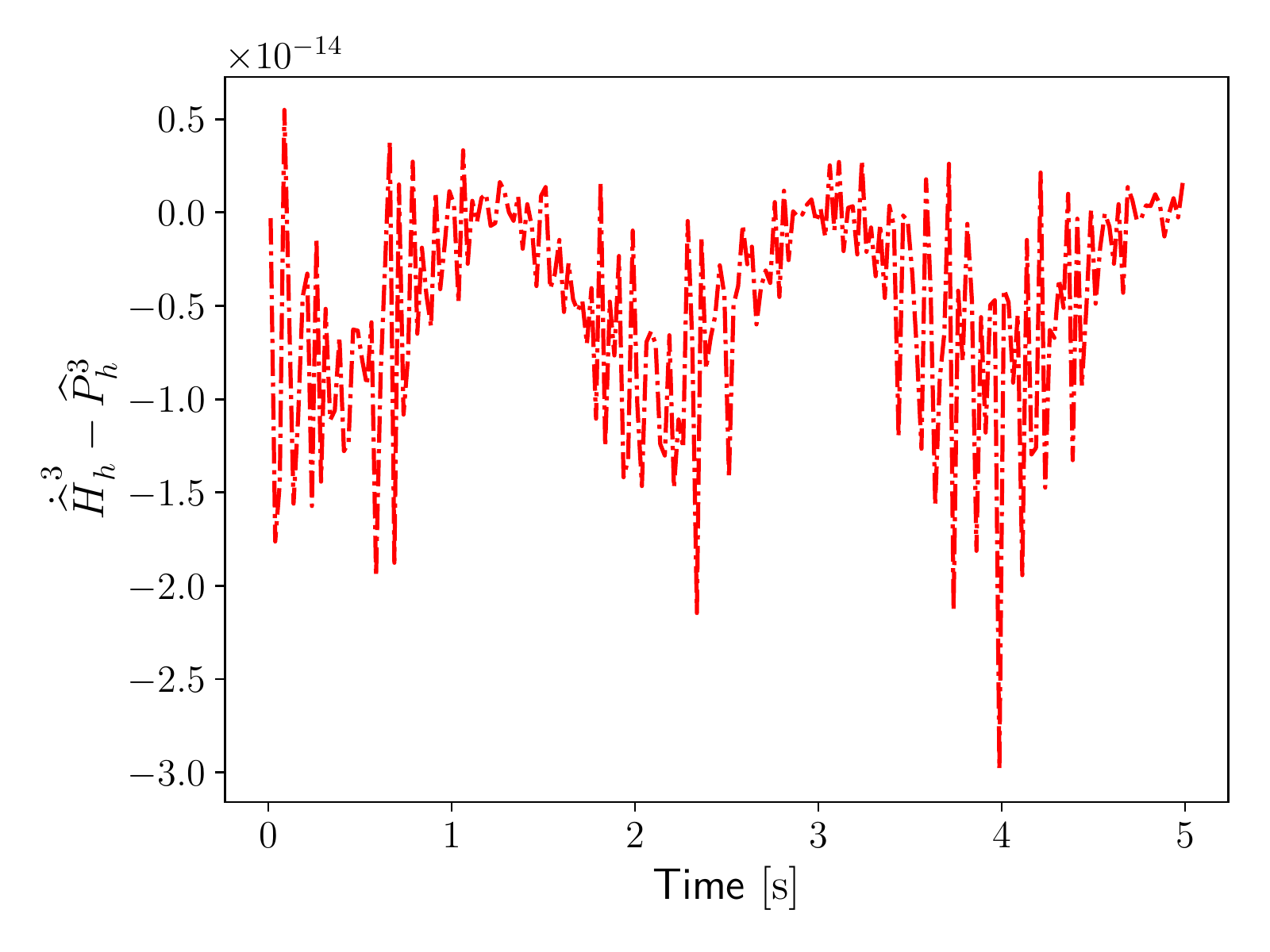}}%
\hspace{8pt}%
\subfloat[][Conservation law for $\dot{H}^{1}_h-P^{1}_h$]{%
	\label{fig:con_H10_wave}%
	\includegraphics[width=0.48\columnwidth]{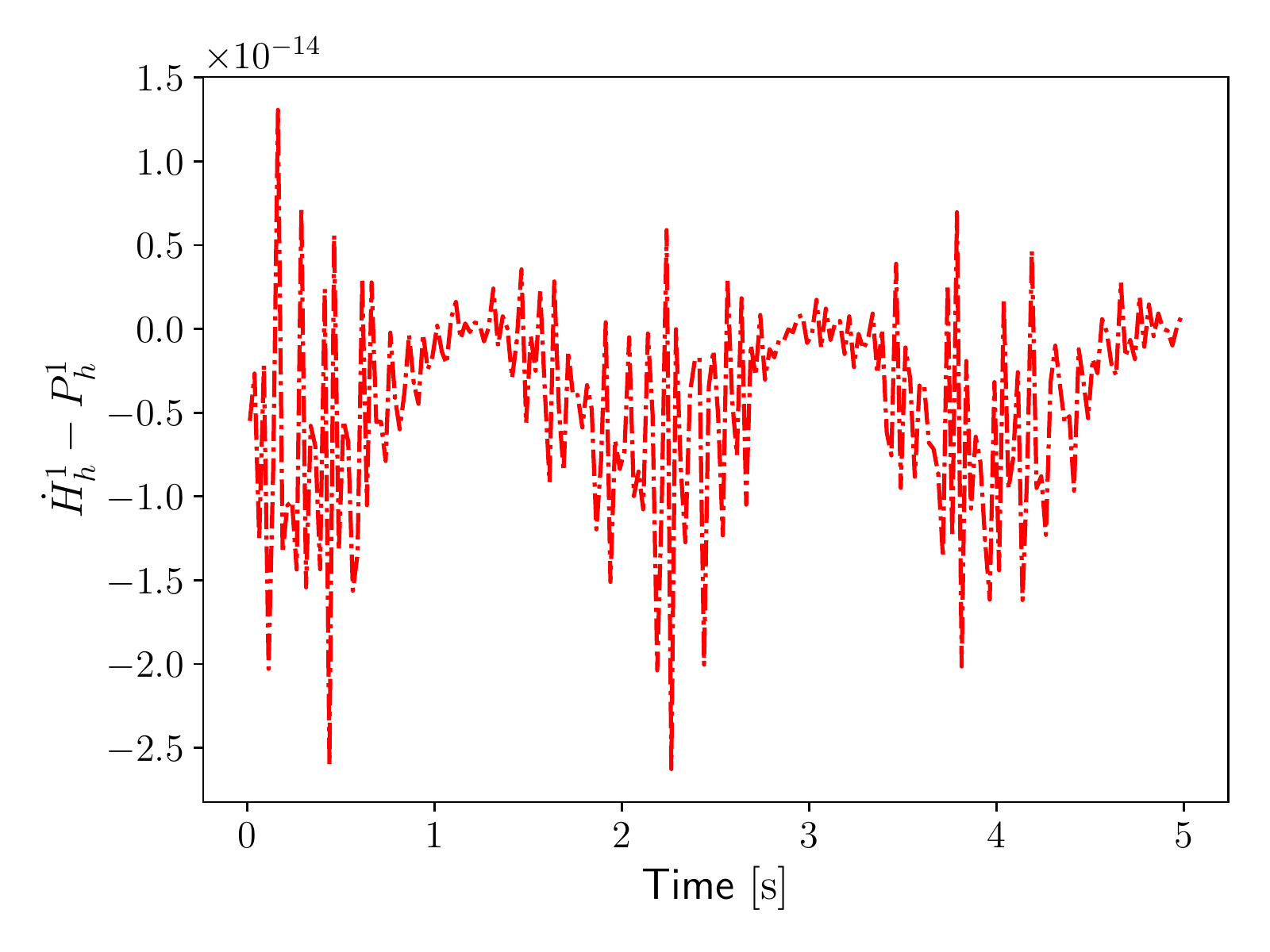}}%
\caption{Conservation properties given by Prop. \ref{pr:discrtime_energyrate} ($N_{\text{el}}=4,\; s=3$ and $\Delta t = \frac{5}{200}$).}%
\label{fig:con_Hdot_wave}%
\end{figure}

\begin{figure}[p]%
\centering
\subfloat[][Conservation of $P_h-<\dual{e}_{\partial, h}^2|f_{\partial, h}^0>_{\partial M}$]{%
	\label{fig:con_P_wave}%
	\includegraphics[width=0.48\columnwidth]{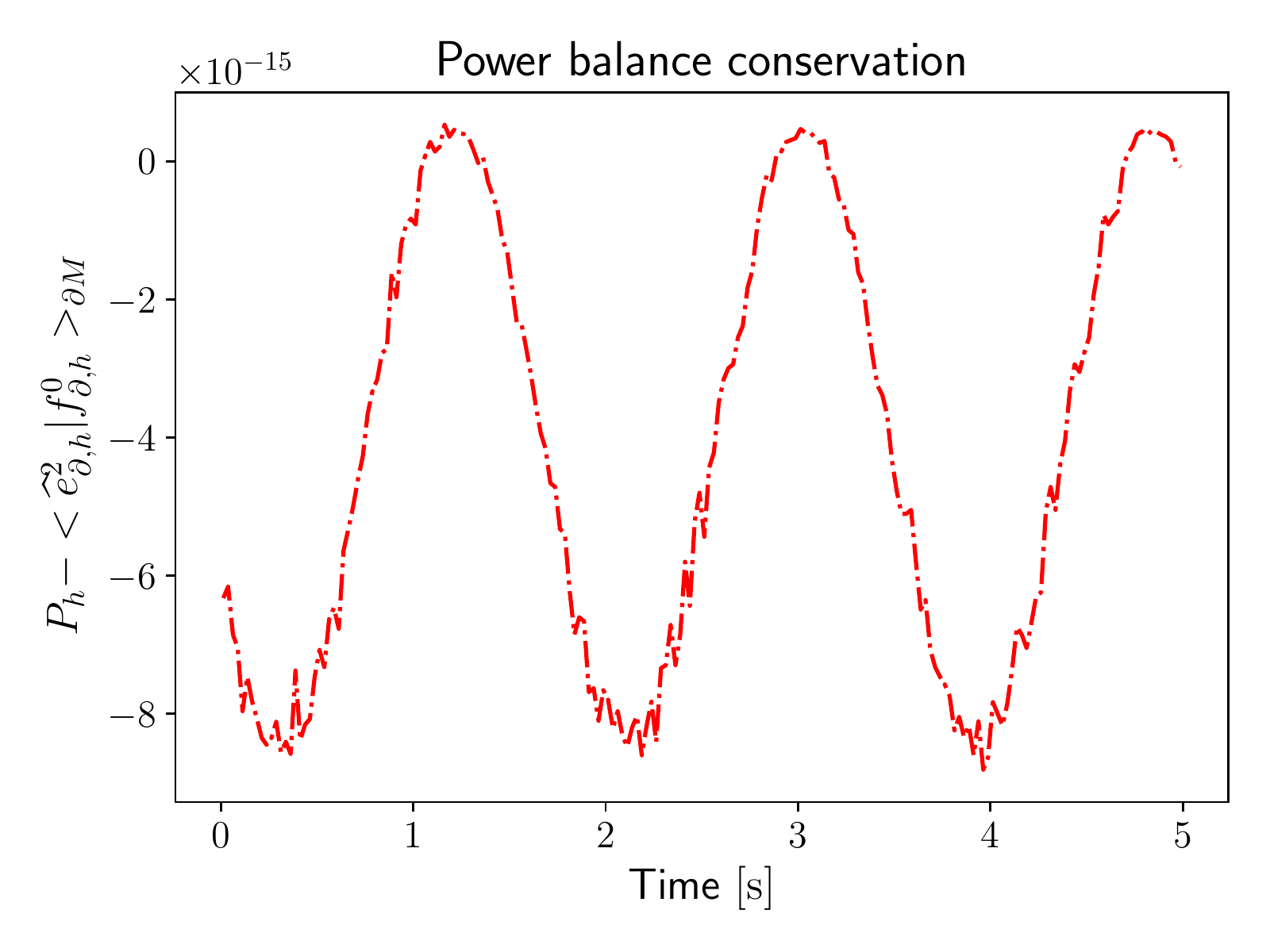}}%
\subfloat[][Error exact and interpolated boundary flow]{%
\label{fig:err_flow_wave}%
\includegraphics[width=0.48\columnwidth]{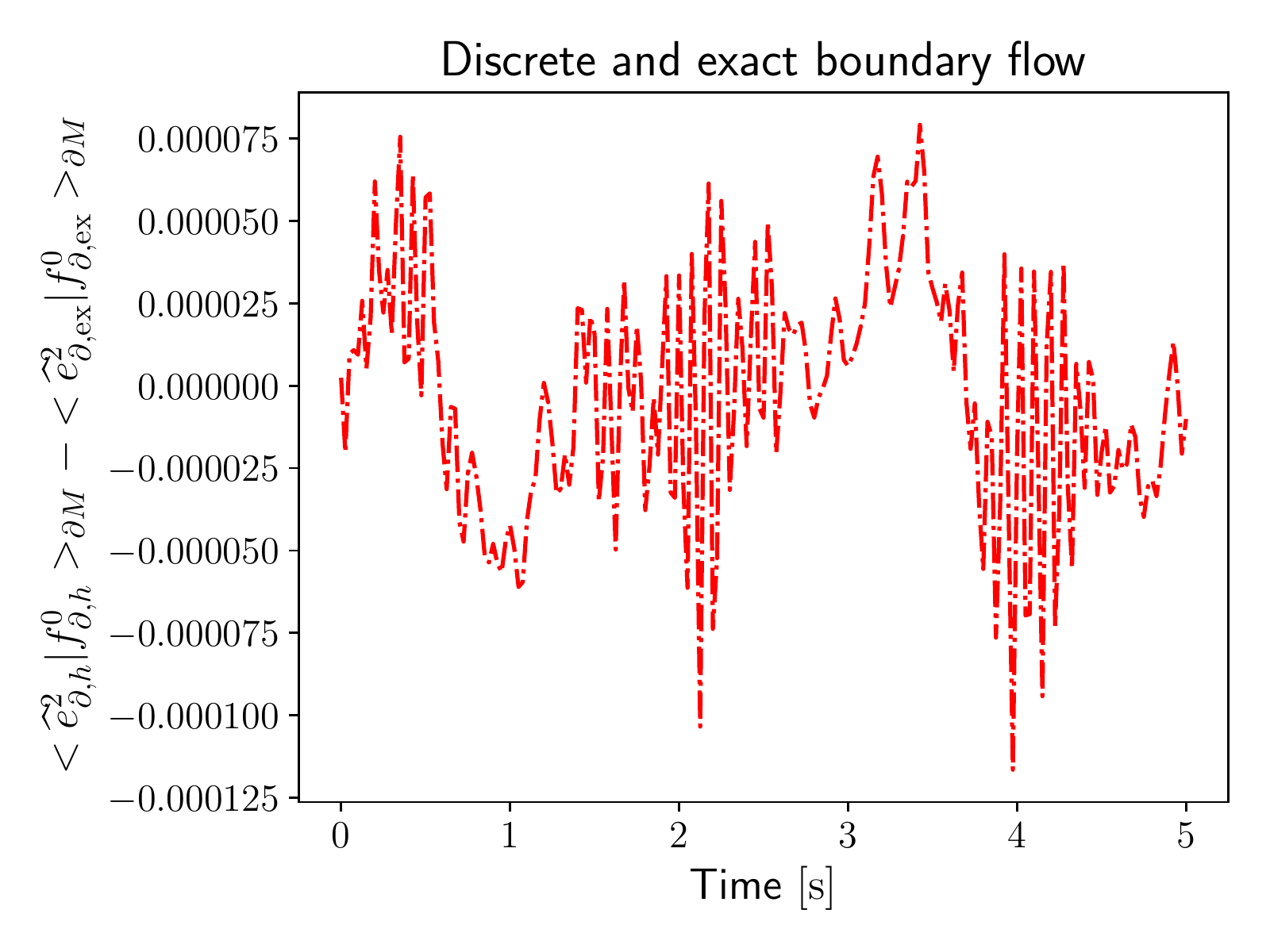}}%
\caption{Power balance given by Pr. \ref{pr:discrtime_Hdot} (left) and error on the power flow (right)  ($N_{\text{el}}=4,\; s=3, \;\Delta t = \frac{5}{200}$).}%
\label{fig:con_pow_wave}%
\end{figure}

\begin{figure}[p]%
\centering
\subfloat[][Error $\dot{H}$ and interpolated boundary flow]{%
\label{fig:err_dHdt_wave}%
\includegraphics[width=0.48\columnwidth]{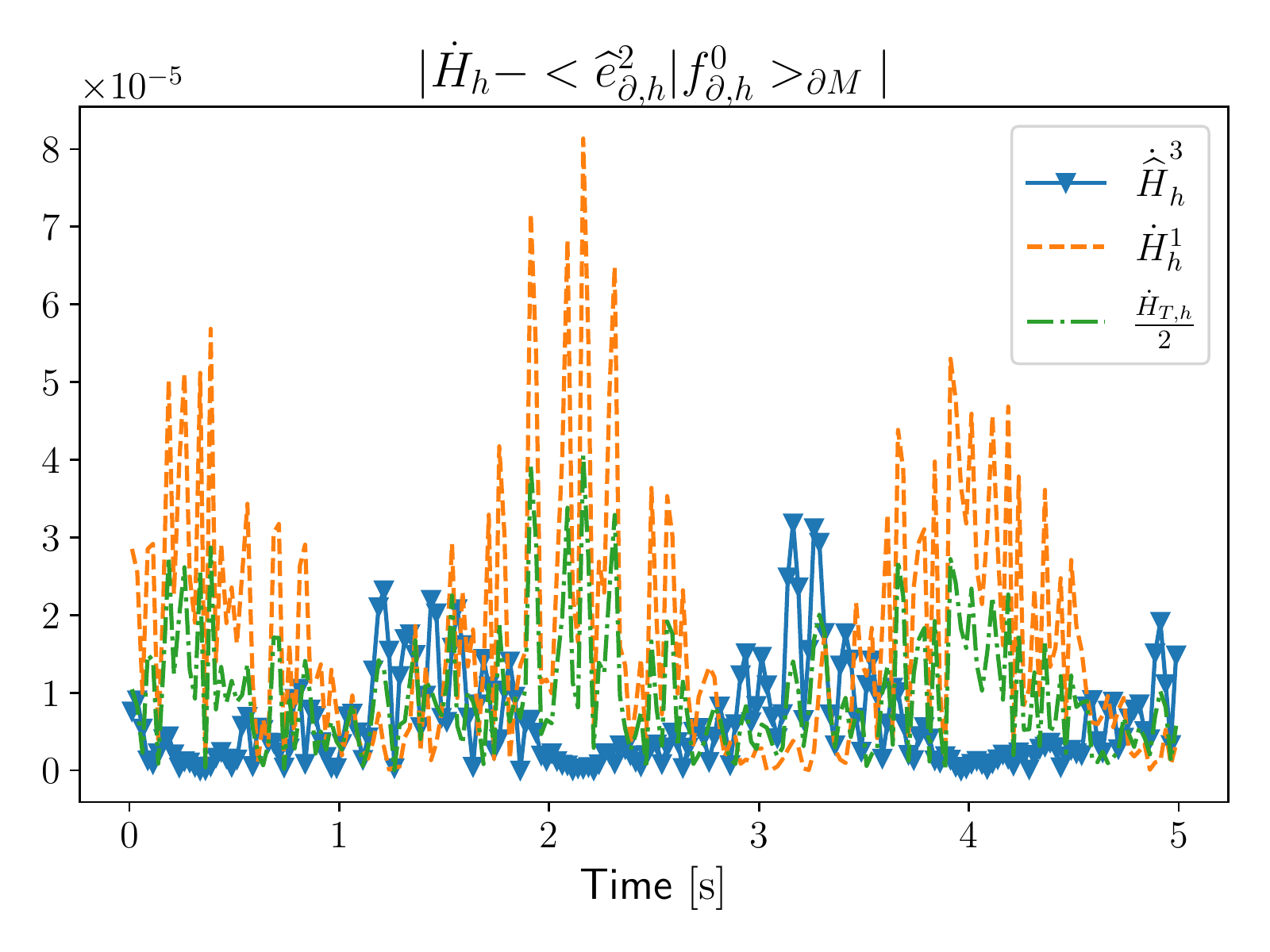}}%
\subfloat[][$\Delta H_h - \Delta H_{\mathrm{ex}}$]{%
	\label{fig:deltaH_wave}%
	\includegraphics[width=0.48\columnwidth]{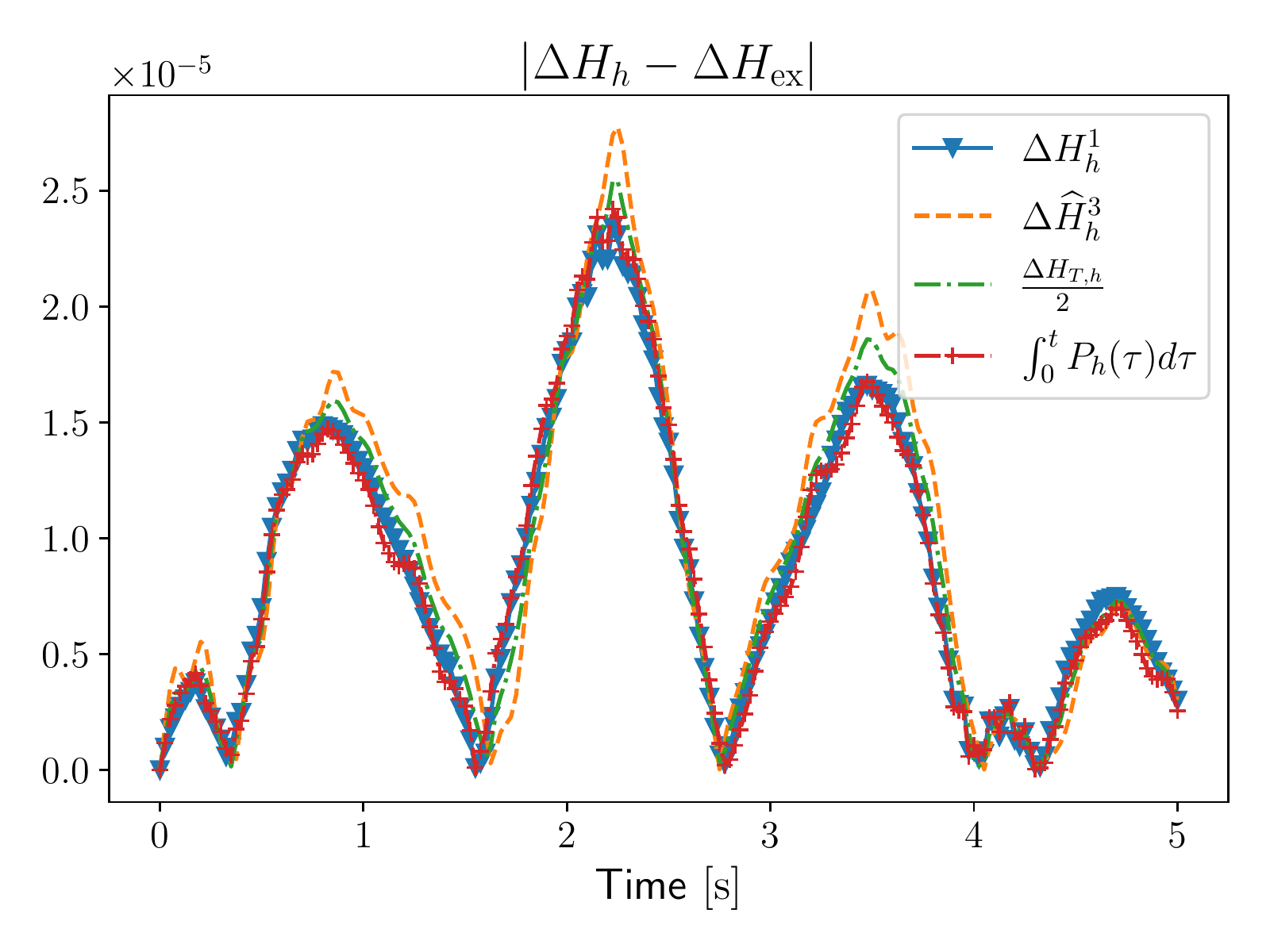}}%
\caption{Energy rate and energy variation error ($N_{\text{el}}=4,\; s=3$ and $\Delta t = \frac{5}{200}$).}%
\label{fig:energy_wave}%
\end{figure}

\subsubsection{Convergence results}
In this section, the convergence rate of the different variables is verified against the analytical solution \eqref{eq:wave_exsol}.  The error is evaluated in the $L^2$ norm at the ending time. Concerning the time discretization, the total simulation time $T_{\text{end}}=1$ and the time step is taken to be $\Delta t= \frac{T_{\text{end}}}{100}$. \\

In Fig. \ref{fig:conv_var_wave} the $L^2$ error trend against the exact solution is reported for all variables. It can be noticed that the error goes as $h^s$, exception made for $v^0$ (cf. Fig. \ref{fig:err_p0}) that for $s=1, 2$ stays between $h^s$ and $h^{s+1}$. Indeed, the presence of mixed and time-varying boundary conditions leads to a lower convergence rate than the expected theoretical order of $h^{s+1}$ for homogeneous boundary conditions (see \cite{wu2021hodgewave} for the error analysis of the Hodge wave equation under one case of homogeneous conditions). The other variables exhibit the same convergence trend as predicted by the analysis in \cite{wu2021hodgewave}. The $L^2$ difference between the dual representation of the variables is reported in Fig. \ref{fig:diff_dual_wave}. The difference of the dual representation converges as $h^s$. This is in accordance with the results obtained in \cite{zhang2021mass}, where the dual field formulation is employed to solve the Navier-Stokes equations in periodic domains.
 
\begin{figure}[p]%
\centering
\subfloat[][$L^2$ error for $\dual{v}^3_h$]{%
	\label{fig:err_p3}%
	\includegraphics[width=0.48\columnwidth]{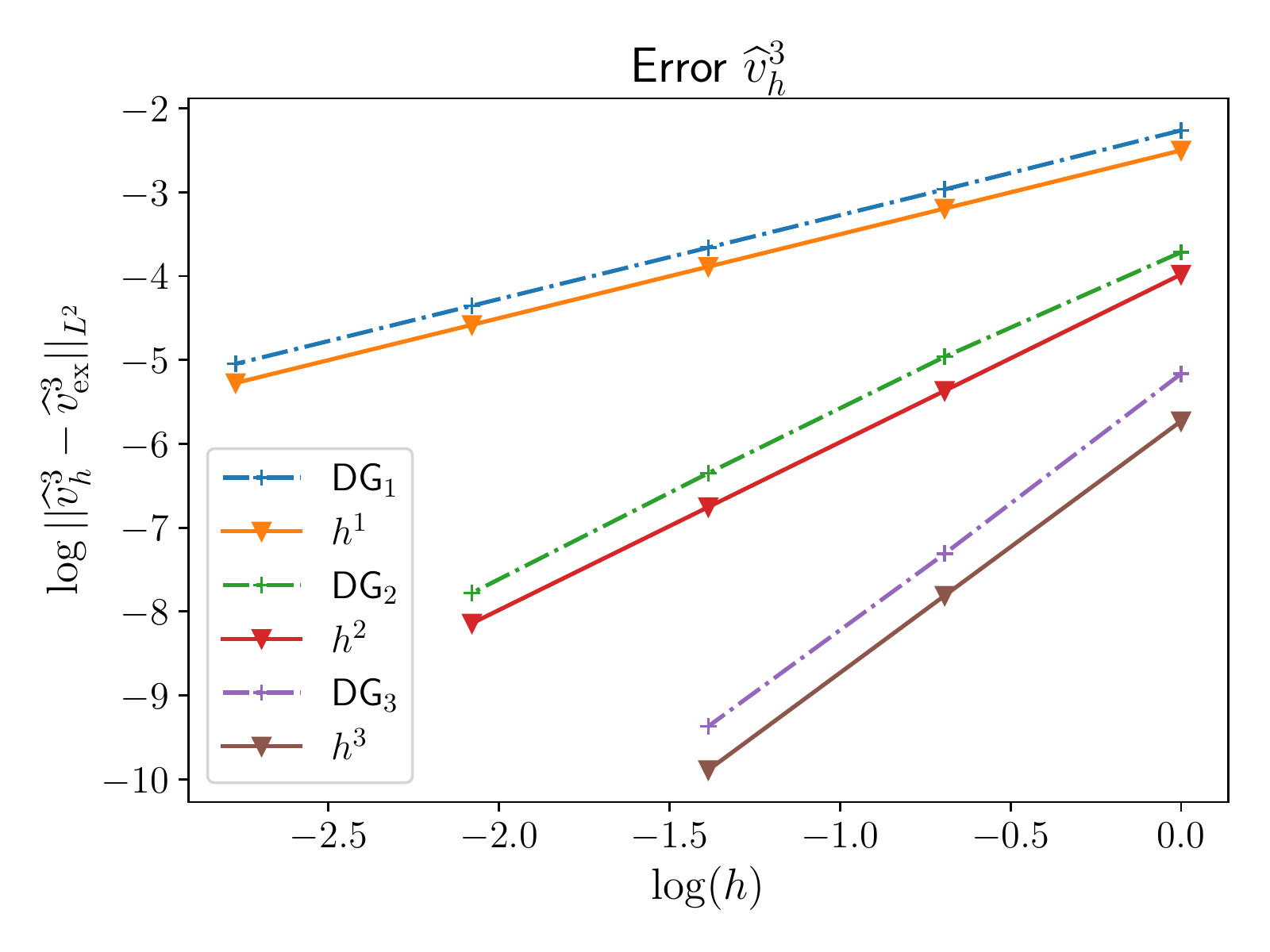}}%
\hspace{8pt}%
\subfloat[][$L^2$ error for $v^0_h$]{%
	\label{fig:err_p0}%
	\includegraphics[width=0.48\columnwidth]{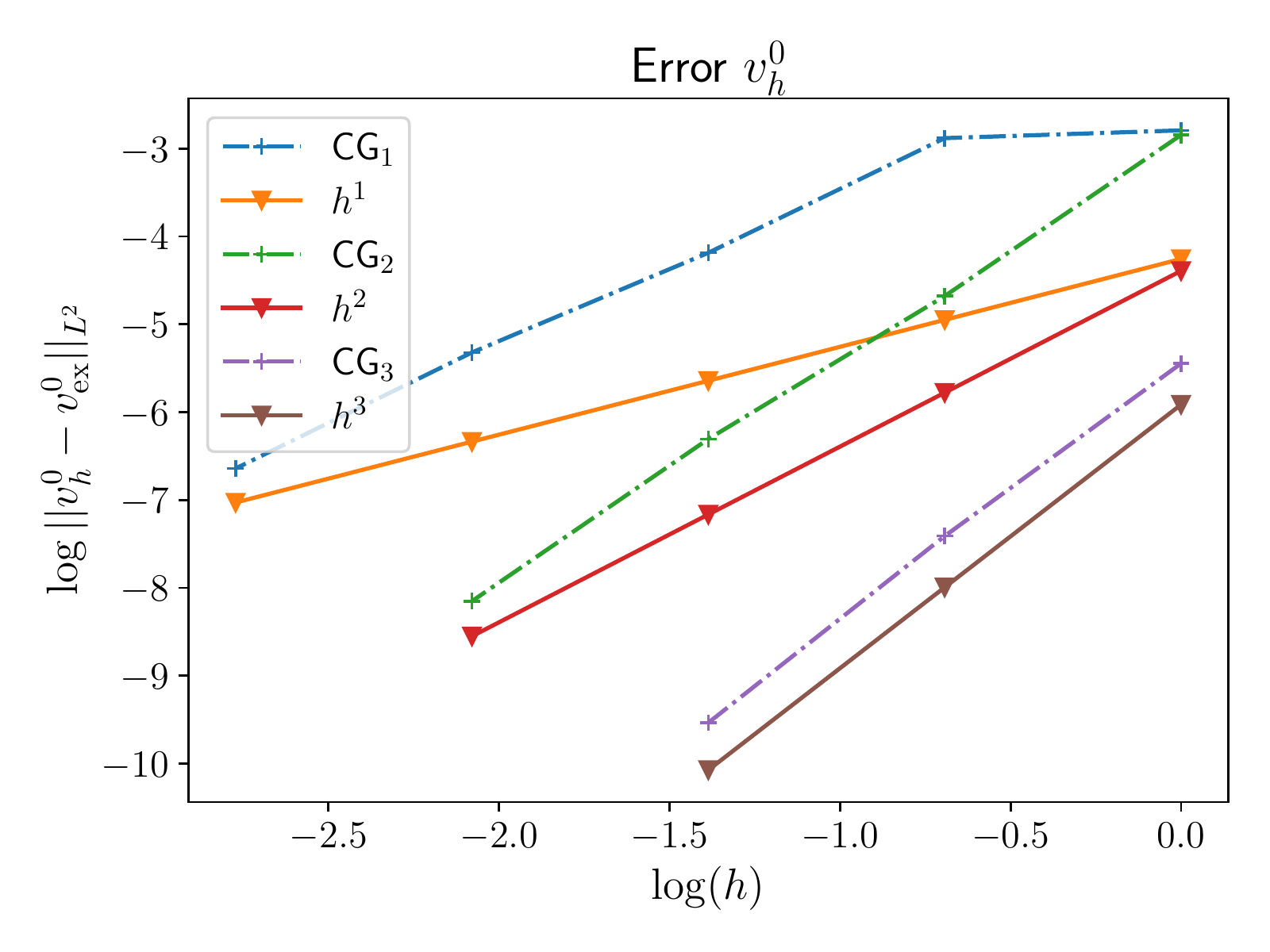}}%
\hspace{8pt}%
\subfloat[][$L^2$ error for $\sigma^1_h$]{%
	\label{fig:err_u1}%
	\includegraphics[width=0.48\columnwidth]{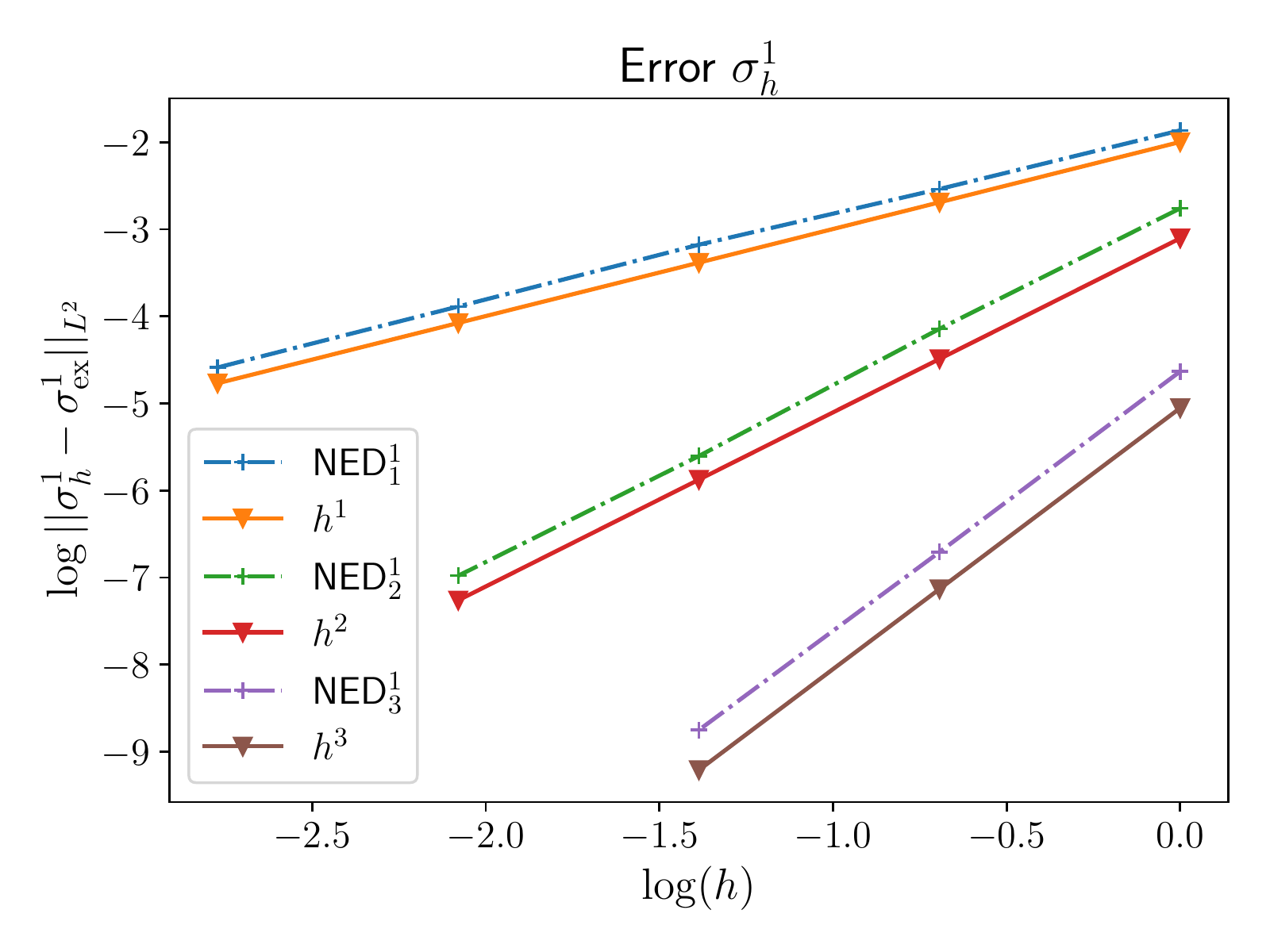}}
	\hspace{8pt}%
\subfloat[][$L^2$ error for $\dual{\sigma}^2_h$]{%
	\label{fig:err_u2}%
	\includegraphics[width=0.48\columnwidth]{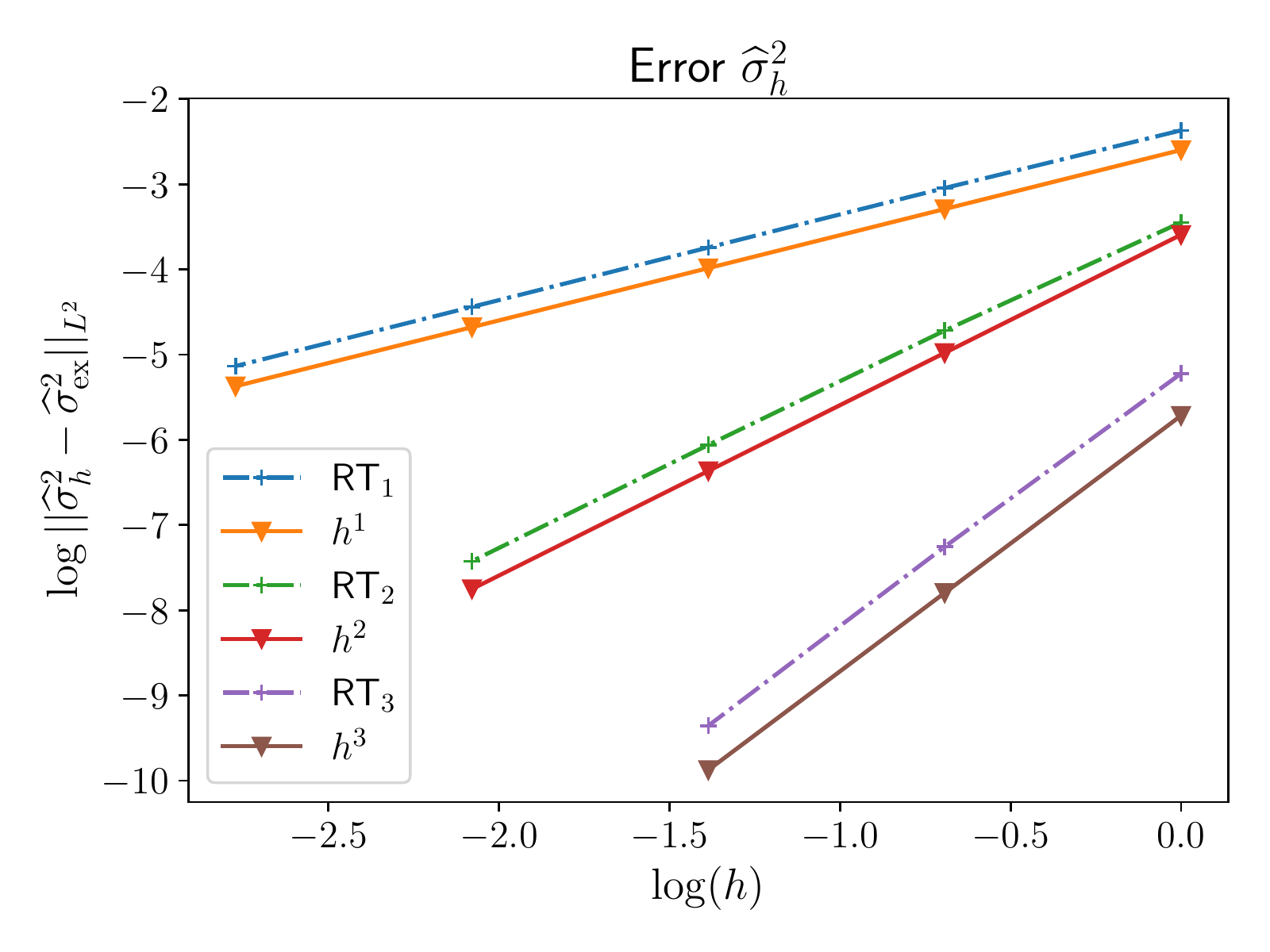}}
\caption{Convergence rate for the different variables in the wave equation at $T_{\text{end}}=1$ for $\Delta t = \frac{1}{100}$.}%
\label{fig:conv_var_wave}%
\end{figure}

\begin{figure}[p]%
\centering
\subfloat[][$L^2$ norm of the difference $\dual{v}^3_h - v^0_h$]{%
	\label{fig:diff_p30}%
	\includegraphics[width=0.48\columnwidth]{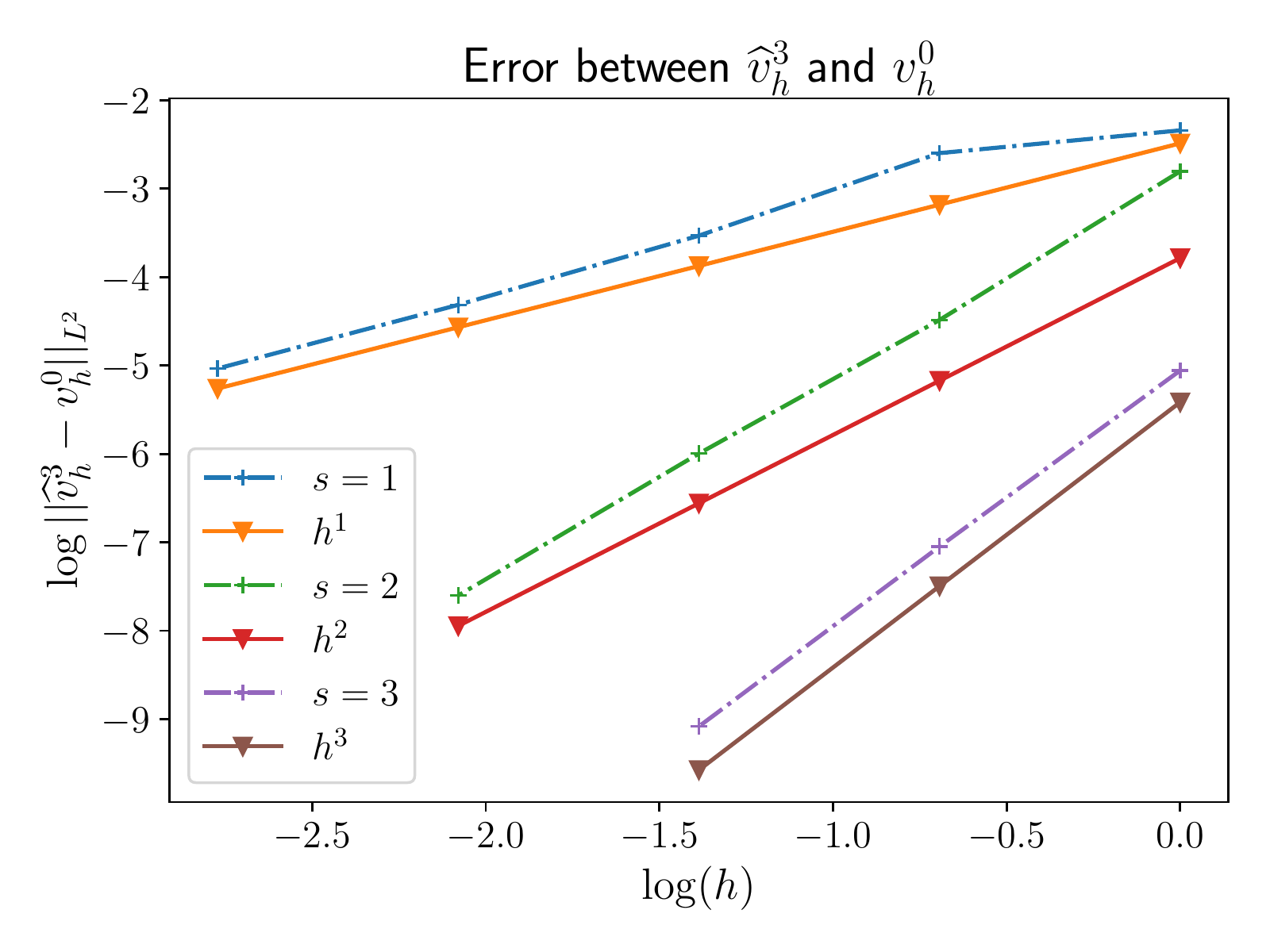}}%
\hspace{8pt}%
\subfloat[][$L^2$ norm of the difference $\sigma^1_h - \dual{\sigma}^2_h$]{%
	\label{fig:diff_q12}%
	\includegraphics[width=0.48\columnwidth]{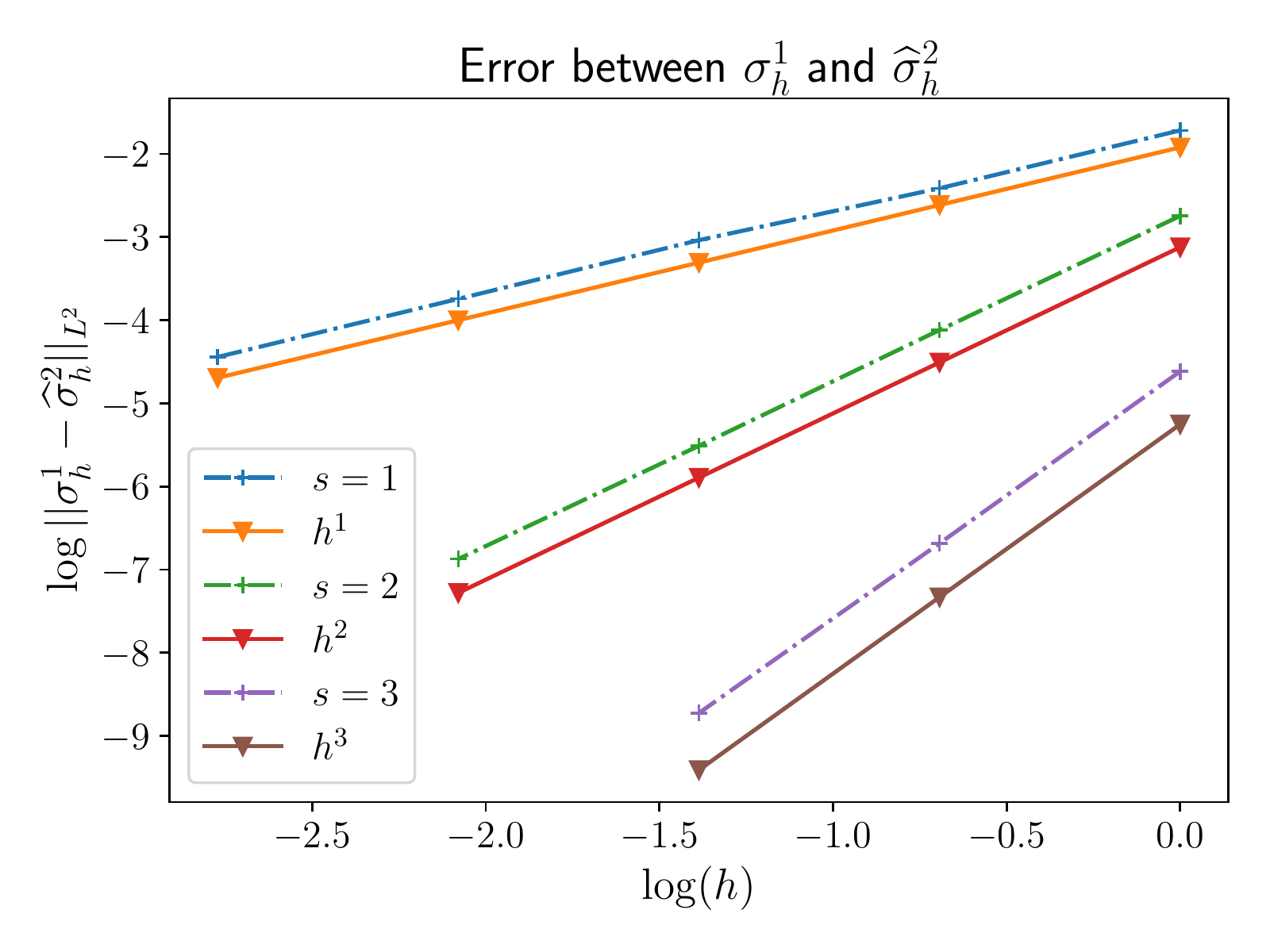}}%
\caption{$L^2$ difference of the dual representation of the solution for the wave equation at $T_{\text{end}}=1$ for $\Delta t = \frac{1}{100}$.}%
\label{fig:diff_dual_wave}%
\end{figure}

\subsection{The Maxwell equations in $3D$}
The Maxwell equations corresponds to the case $p=2, \; q=2$. The energy variables correspond to the electric displacement two form $\dual{d}^2 = \dual{\alpha}^2$ and the magnetic field $b^2=\beta^2$. The Hamiltonian reads
\begin{equation}
    H(\dual{d}^2, b^2) = \frac{1}{2} \int_M \varepsilon^{-1} \dual{d}^2 \wedge \star \dual{d}^2 + \mu^{-1} b^2 \wedge \star b^2,
\end{equation}
where $\varepsilon \in \bbR$ is the electric permittivity and $\mu \in \bbR$ is the magnetic permeability. The variational derivative of the Hamiltonian are given by 
\begin{equation}
    e^1 := \delta_{\dual{d}^2} H = \varepsilon^{-1} \star \dual{d}^2, \qquad \dual{h}^1 := \delta_{b^2} H = \mu^{-1} \star b^2.
\end{equation}
Variables $e^1, \; \dual{h}^1$ are the electric field and the magnetizing field respectively. Since the reduction of the constitutive equation is such to keep only the efforts variables and their duals, the following dynamical system is obtained.
\begin{equation}\label{eq:maxwell_eq}
\begin{bmatrix}
\varepsilon & 0 \\
0 & \mu \\
\end{bmatrix}
    \begin{pmatrix}
    \partial_t \dual{e}^2\\
    \partial_t h^2
    \end{pmatrix} = 
    \begin{bmatrix}
    0 & \d^1 \\
    -\d^1 & 0 \\
    \end{bmatrix}
    \begin{pmatrix}
    {e}^1\\
    \dual{h}^1
    \end{pmatrix},
\end{equation}
where $\dual{e}^2 = \star e^1, \; h^2= \star \dual{h}^1$. Given the functions
\begin{equation}
    \bm{g}(x, y, z) = \begin{pmatrix}
    -\cos(x)\sin(y)\sin(z) \\
    0 \\
    \sin(x)\sin(y)\cos(z)
    \end{pmatrix}, \qquad f(t) = \frac{\sin(\omega t)}{\omega},
\end{equation}
where $\omega = \sqrt{3} c$ and $c=(\sqrt{\mu \varepsilon})^{-1}$, the system \eqref{eq:maxwell_eq} is solved by the eigenmode 
\begin{equation}\label{eq:maxwell_exsol}
\begin{aligned}
\dual{e}^2_{\mathrm{ex}} &= \mu \star \bm{g}^\flat \diff{f}{t}, \\    
h^2_{\mathrm{ex}} &= -\d{\bm{g}^\flat} f, 
\end{aligned} \qquad
\begin{aligned}
e^1_{\mathrm{ex}} &= \mu \bm{g}^\flat \diff{f}{t}, \\    
\dual{h}^1_{\mathrm{ex}} &= -\star \d{\bm{g}^\flat} f.
\end{aligned}
\end{equation}
The exact solution provides the appropriate inputs to be fed into the system
\begin{equation}
    u^1_1 = \left. \tr e^1_{\mathrm{ex}}  \right\vert_{\Gamma_1}, \qquad \dual{u}^1_2 = \tr \dual{h}^1_{\mathrm{ex}} \vert_{\Gamma_2}.
\end{equation}

\revTwo{
The employment of the dual field discretization leads to the resolution of two systems:
\begin{itemize}
    \item the primal system \eqref{eq:alg_primalPH} of outer oriented variables $\dual{e}^2, \dual{h}^1$;
    \item the dual system \eqref{eq:alg_dualPH} of inner oriented variables ${e}^1, h^2$.
\end{itemize}
The discrete variables are represented by
\begin{itemize}
    \item Raviart-Thomas elements RT$_s$ for $e^2_h$ and $h^2_h$,
    \item Nédélec elements of the first kind NED$_s^1$ for $e^1_h$ and $h^1_h$.
\end{itemize}
}
For the numerical test the electric permittivity and magnetic permeability take the values
\begin{equation*}
    \mu = \frac{3}{2}, \qquad \varepsilon = 2.
\end{equation*}

\subsection{Conservation properties}

The conservation properties of the scheme are verified against the exact solution \eqref{eq:maxwell_exsol}. The test is performed using $N_{\text{el}}=4$ elements for each side of the box-shaped domain and polynomial degree $s=3$. Once again, the total simulation time $T_{\text{end}}=5$ and the time step is taken to be $\Delta t= \frac{T_{\text{end}}}{200}$. \\

An important feature of the Maxwell equations \eqref{eq:maxwell_eq} is that they verify the following constraints
\begin{equation*}
    \d^2 \dual{e}^2 = 0, \qquad \d^2 h^2=0.
\end{equation*}
This result follows by taking the exterior derivative of each line of system \eqref{eq:maxwell_eq}, under the assumption that the initial conditions respect these constraints. Mixed finite element strategies, like the ones proposed in \cite{asad2019maxwell,farle2013,payen2020}, cannot satisfy both constraints as they do not employ a dual representation for each variable. Instead, the dual field formulation naturally capture this aspect as shown in Fig. \ref{fig:div_maxwell}. The energy rate conservation and discrete power balance are reported in Figs. \ref{fig:con_Hdot_maxwell} and \ref{fig:con_P_maxwell} respectively. The numerical test confirms once again the expected behaviour. The power flow error due to the polynomial interpolation is of the order of $10^{-5}$ (cf. Fig. \ref{fig:err_flow_wave}). 
For what concerns the energy behaviour, three different energies are once again considered
\begin{equation*}
\begin{aligned}
    \dual{H}^{2}_h&= \frac{1}{2} \int_M \varepsilon \dual{e}^2_h \wedge \star \dual{e}^2_h + \mu \dual{h}^1_h \wedge \star \dual{h}_h^1, \\
    H^{2}_h&= \frac{1}{2} \int_M \varepsilon e^1_h \wedge \star e^1_h + \mu h^2_h \wedge \star h_h^2, \\
    \frac{H_T}{2} &= \frac{1}{2} \int_M \varepsilon e^1_h \wedge \dual{e}^2_h + \mu \dual{h}^1_h \wedge h_h^2. \\
\end{aligned}
\end{equation*}
Fig. \ref{fig:energy_maxwell} shows that the error on the energies are of the order $10^{-5}$. The dual field energy $\frac{H_T}{2}$ stays in the middle of $\dual{H}^{2}_h, \; H^{2}_h$. The variation of energy is also computed using the power flow 
\begin{equation*}
    \Delta H = \int_0^t P_h(\tau) \d\tau, \qquad P_h = \int_M \varepsilon e^1_h \wedge \partial_t \widehat{e}^2_h + \mu \widehat{h}^1_h \wedge \partial_t h_h^2.
\end{equation*}
Indeed this variation of the energy is not the most precise (cf. Fig. \ref{fig:deltaH_maxwell}). A rigorous error analysis is needed to assess the conditions under which one of these energies perform better.

\begin{figure}[tbh]%
\centering
\subfloat[][$L^2$ norm divergence of $\dual{e}_h^2$]{%
	\label{fig:divE2}%
	\includegraphics[width=0.48\columnwidth]{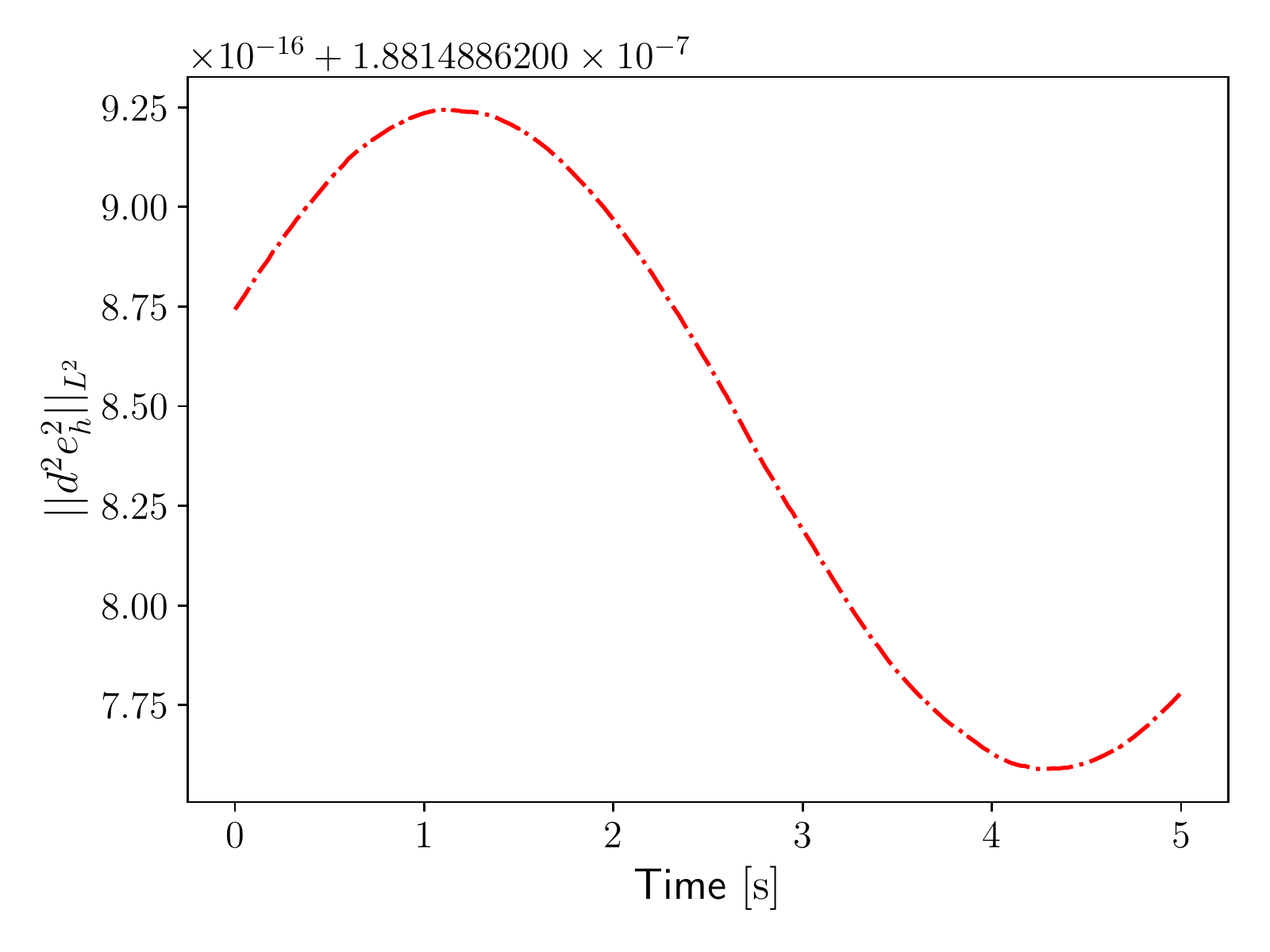}}%
\hspace{8pt}%
\subfloat[][$L^2$ norm divergence of $h_h^2$]{%
	\label{fig:divH2}%
	\includegraphics[width=0.48\columnwidth]{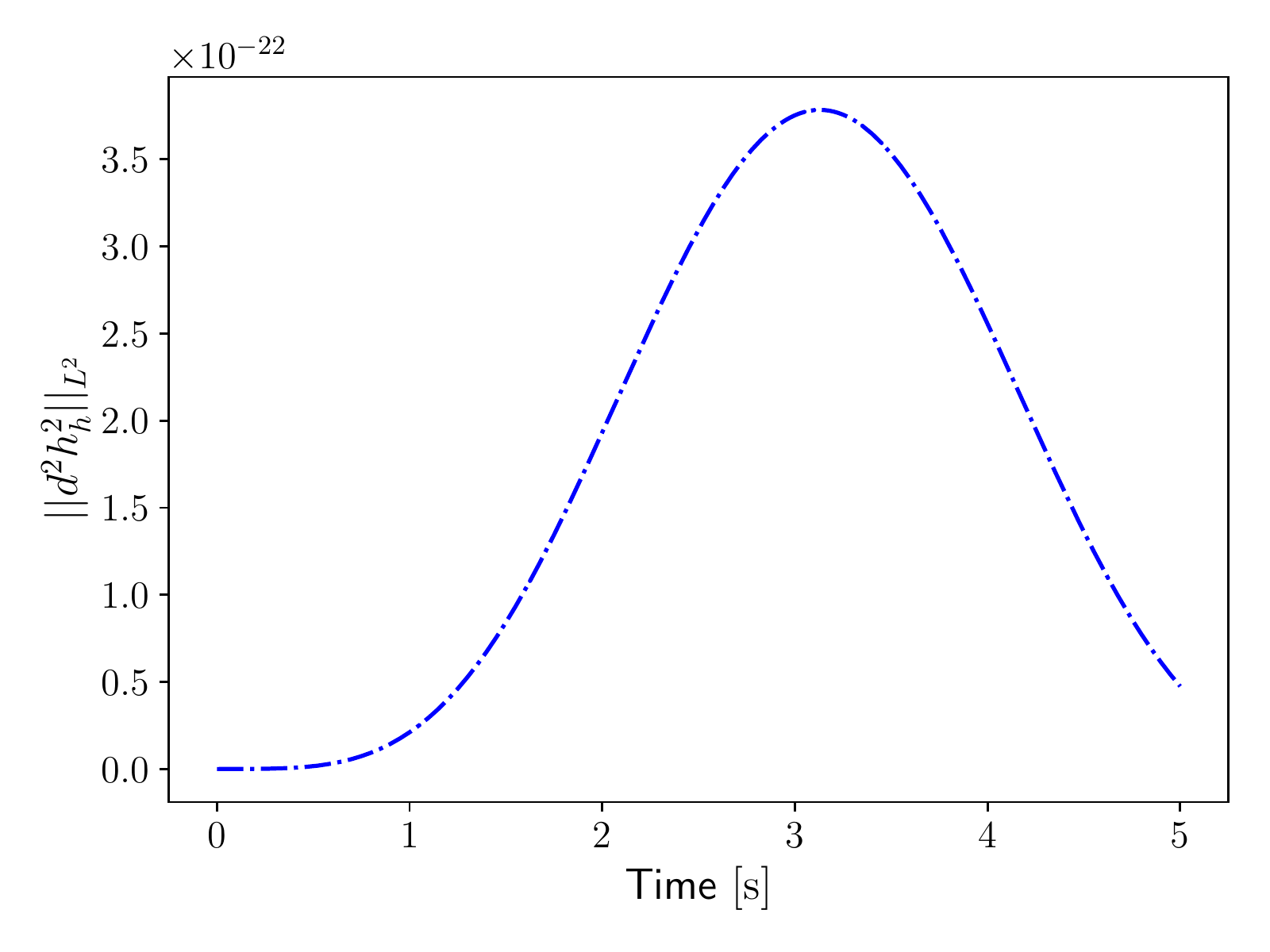}}%
\caption{$L^2$ norm divergence of the two forms $\dual{e}^2_h, h^2_h$}%
\label{fig:div_maxwell}%
\end{figure}

\begin{figure}[p]%
\centering
\subfloat[][Conservation law for $\dot{\dual{H}}^{2}_h-\dual{P}^{2}_h$.]{%
	\label{fig:con_H_E2H1_maxwell}%
	\includegraphics[width=0.48\columnwidth]{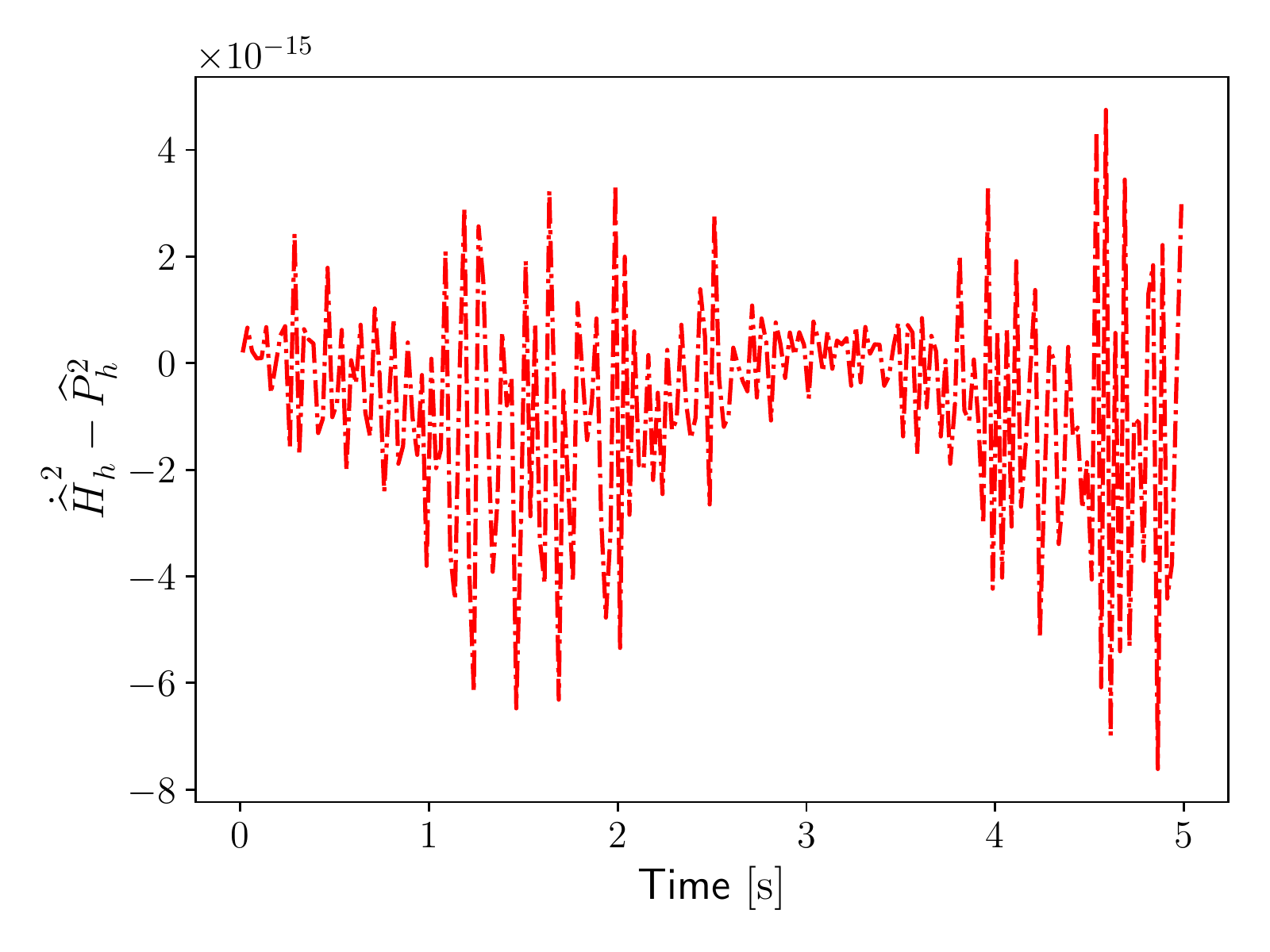}}%
\hspace{8pt}%
\subfloat[][Conservation law for $\dot{H}^{2}_h-P^{2}_h$.]{%
	\label{fig:con_H_H2E1_maxwell}%
	\includegraphics[width=0.48\columnwidth]{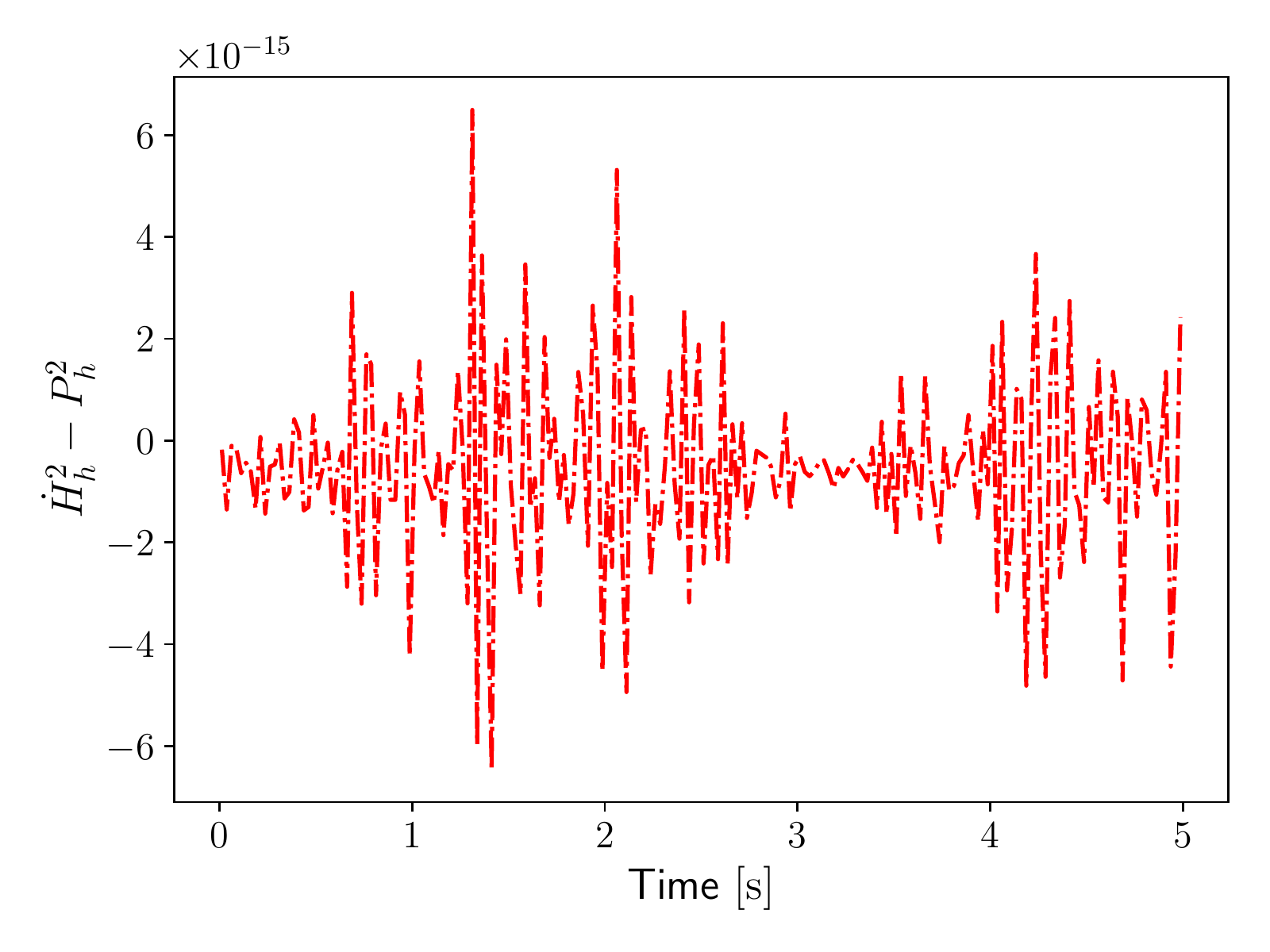}}%
\caption{Conservation properties given by Proposition \ref{pr:discrtime_energyrate} ($N_{\text{el}}=4,\; s=3$ and $\Delta t = \frac{5}{200}$).}%
\label{fig:con_Hdot_maxwell}%
\end{figure}

\begin{figure}[p]%
\centering
\subfloat[][Conservation of $P_h-<\dual{e}_{\partial, h}^1| f_{\partial, h}^1>_{\partial M}$.]{%
	\label{fig:con_P_maxwell}%
	\includegraphics[width=0.48\columnwidth]{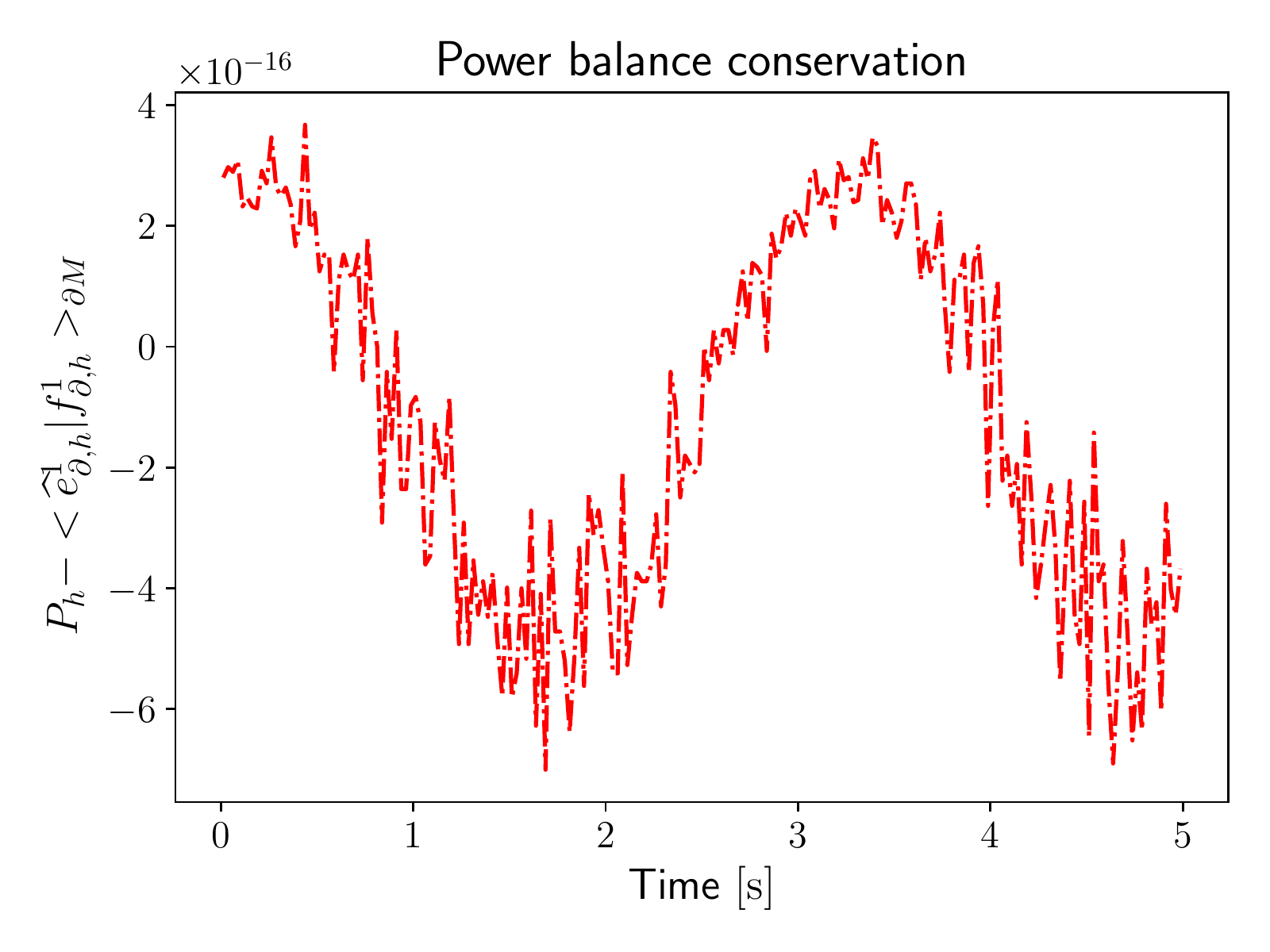}}%
\subfloat[][Error exact and interpolated boundary flow.]{%
\label{fig:err_flow_maxwell}%
\includegraphics[width=0.48\columnwidth]{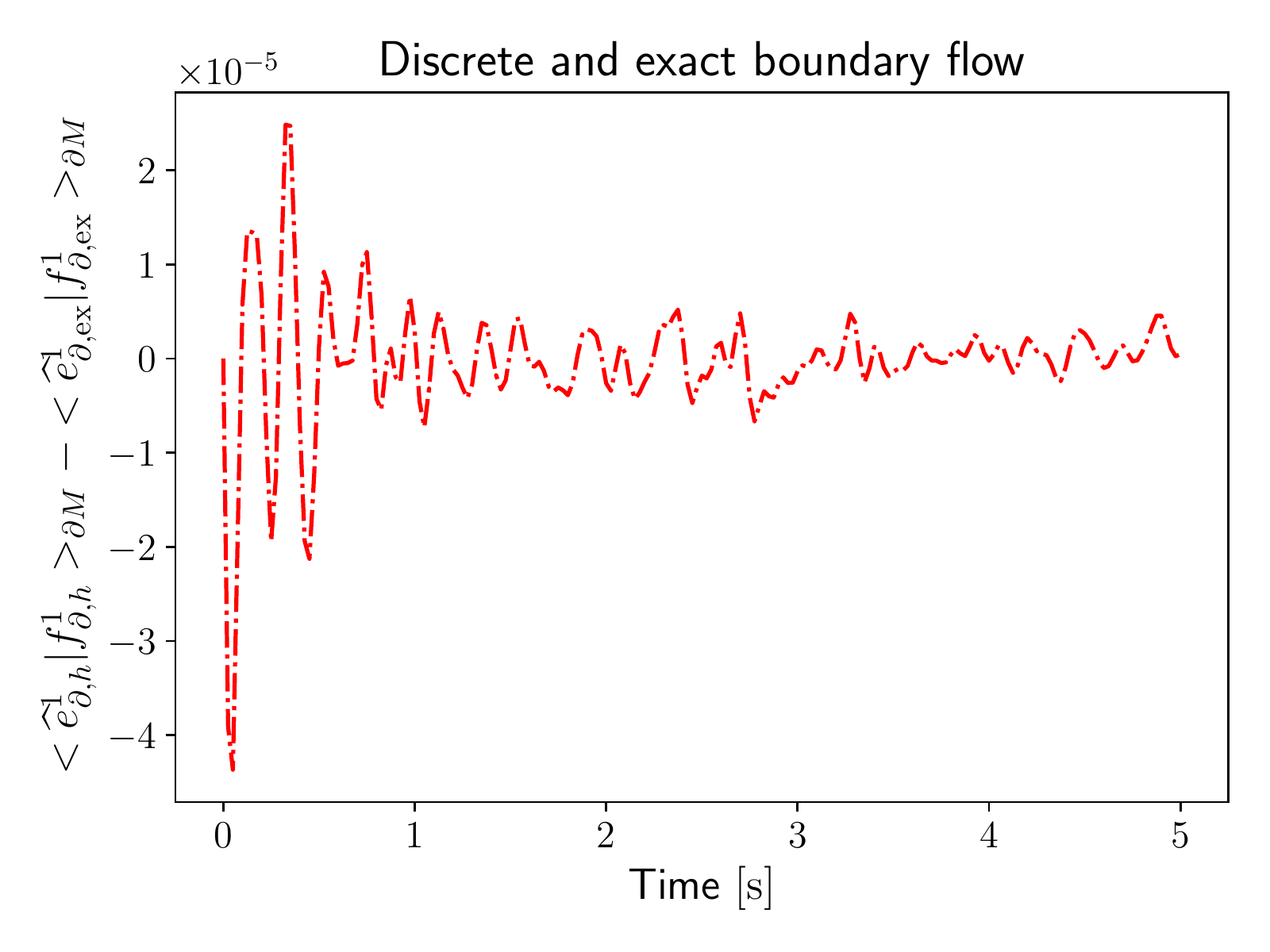}}%
\caption{Power balance given by Pr. \ref{pr:discrtime_Hdot} (left) and error on the power flow (right)  ($N_{\text{el}}=4,\; s=3, \; \Delta t = \frac{5}{200}$).}%
\label{fig:con_pow_maxwell}%
\end{figure}

\begin{figure}[p]%
\centering
\subfloat[][Error $\dot{H}$ and interpolated boundary flow]{%
\label{fig:err_dHdt_maxwell}%
\includegraphics[width=0.48\columnwidth]{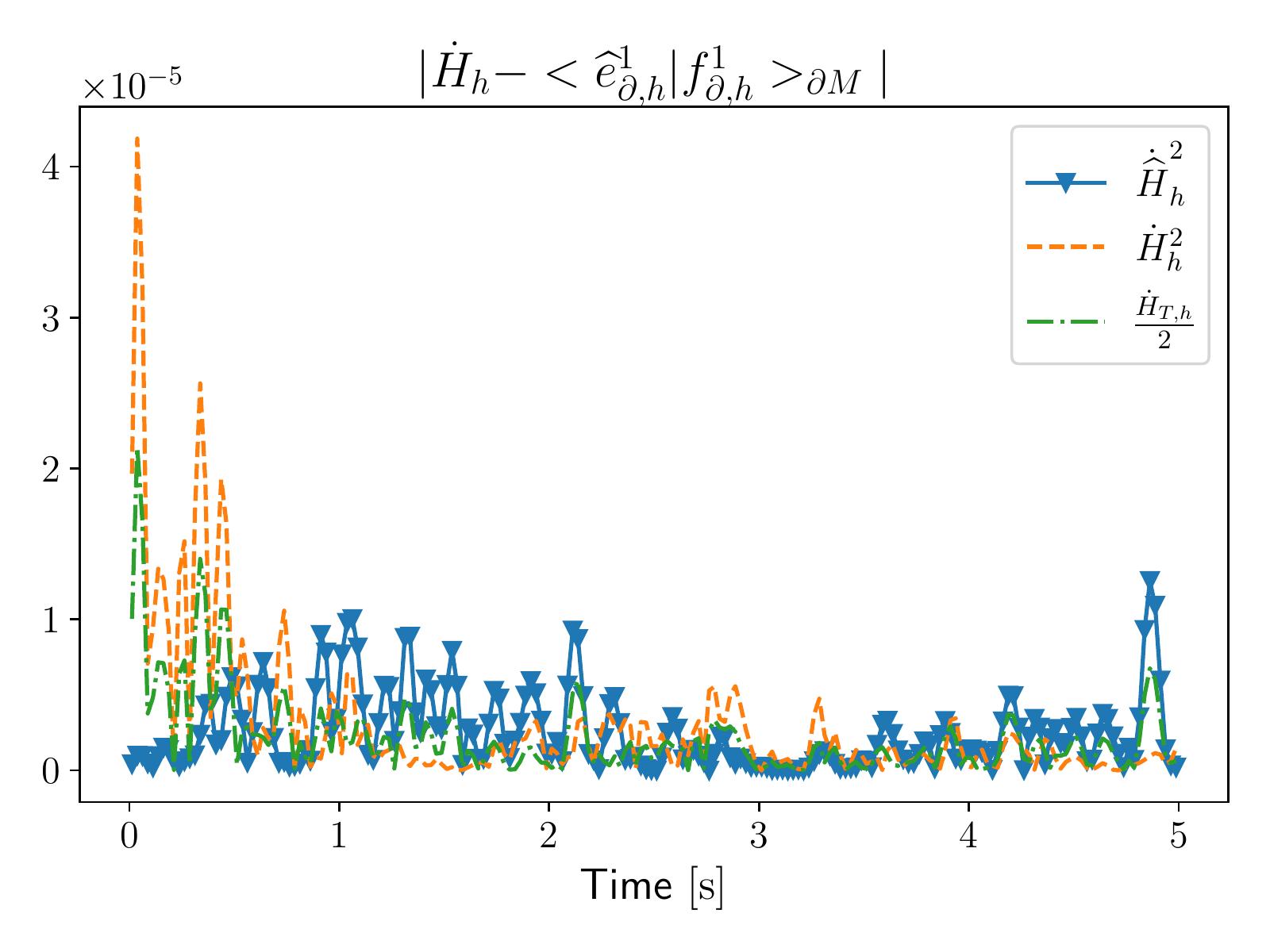}}%
\subfloat[][$\Delta H_h - \Delta H_{\mathrm{ex}}$]{%
	\label{fig:deltaH_maxwell}%
	\includegraphics[width=0.48\columnwidth]{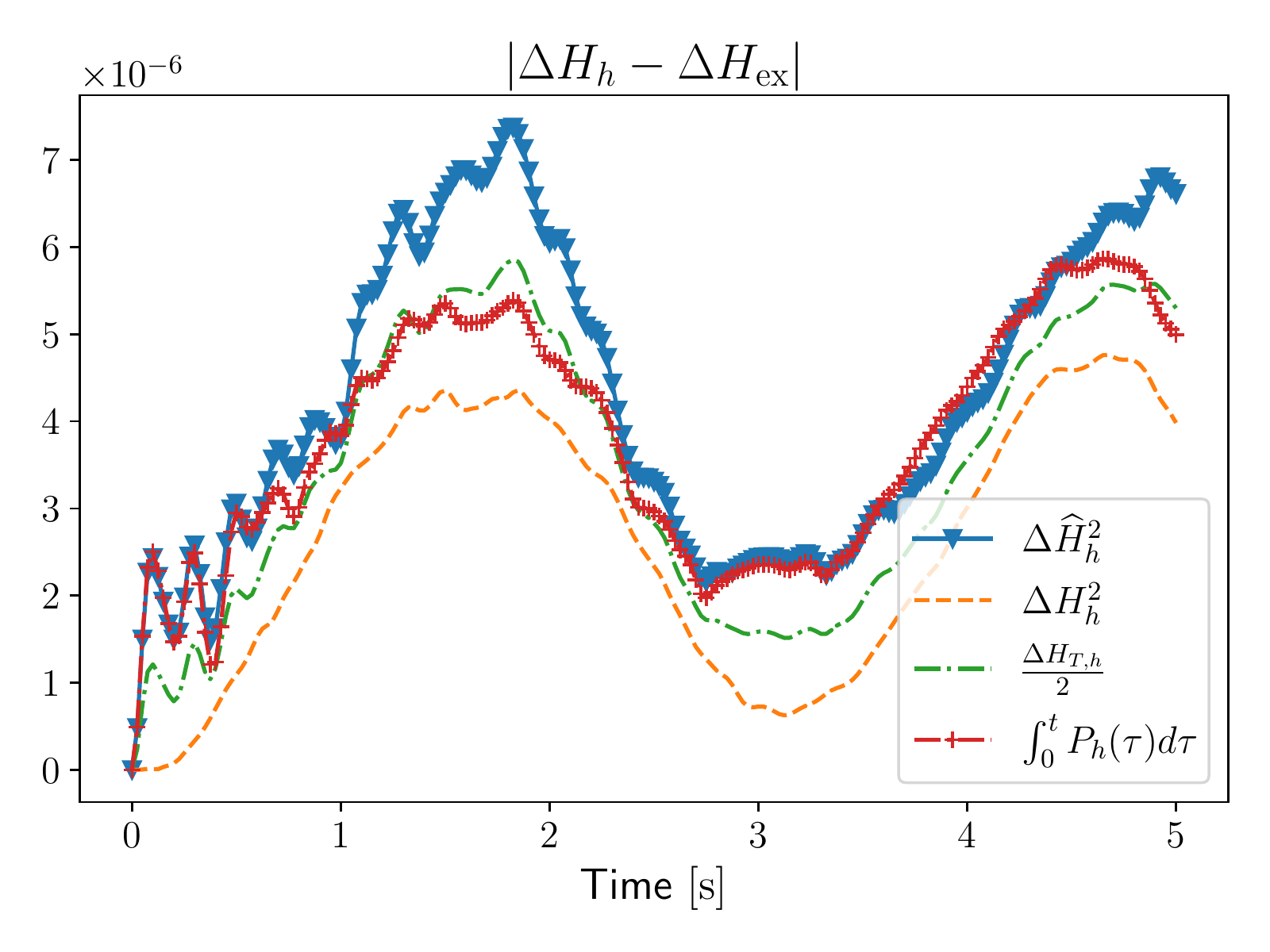}}%
\caption{Energy rate and energy variation error ($N_{\text{el}}=4,\; s=3$ and $\Delta t = \frac{5}{200}$)}%
\label{fig:energy_maxwell}%
\end{figure}

\subsubsection{Convergence results}
The convergence rate of the variables with respect to the exact solution \eqref{eq:maxwell_exsol} is measured in the $L^2$ norm of the error at the final time $T_{\mathrm{end}}=1$ with time step $\Delta t = \frac{1}{100}$. \\

All variables converge with a rate given by $h^s$ (see Fig. \ref{fig:conv_var_maxwell}). However, it appears that for $s=2, 3$ variables $\dual{e}^2_h$ and $h^2_h$ (Figs. \ref{fig:err_E2}, \ref{fig:err_H2}) the convergence rate is a little less than $h^s$. A rigorous error analysis is needed to gain more insight about the observed behaviour. An a priori analysis using a mixed finite element scheme can be found in \cite{asad2019maxwell}. However, therein only homogeneous and uniform boundary conditions are considered.

\begin{figure}[p]%
\centering
\subfloat[][$L^2$ error for $\dual{e}^2_h$]{%
	\label{fig:err_E2}%
	\includegraphics[width=0.48\columnwidth]{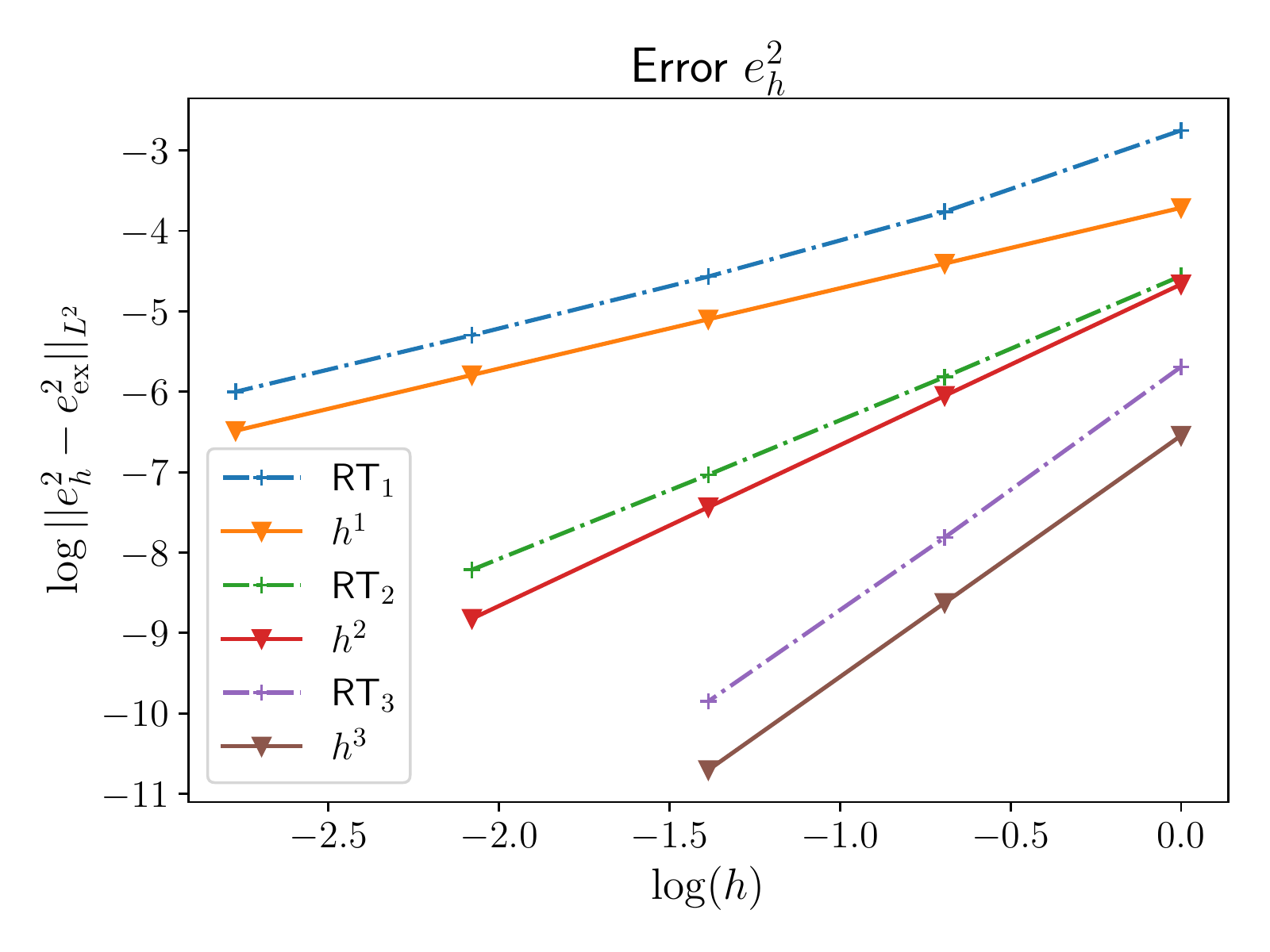}}%
\hspace{8pt}%
\subfloat[][$L^2$ error for $e^1_h$]{%
	\label{fig:err_E1}%
	\includegraphics[width=0.48\columnwidth]{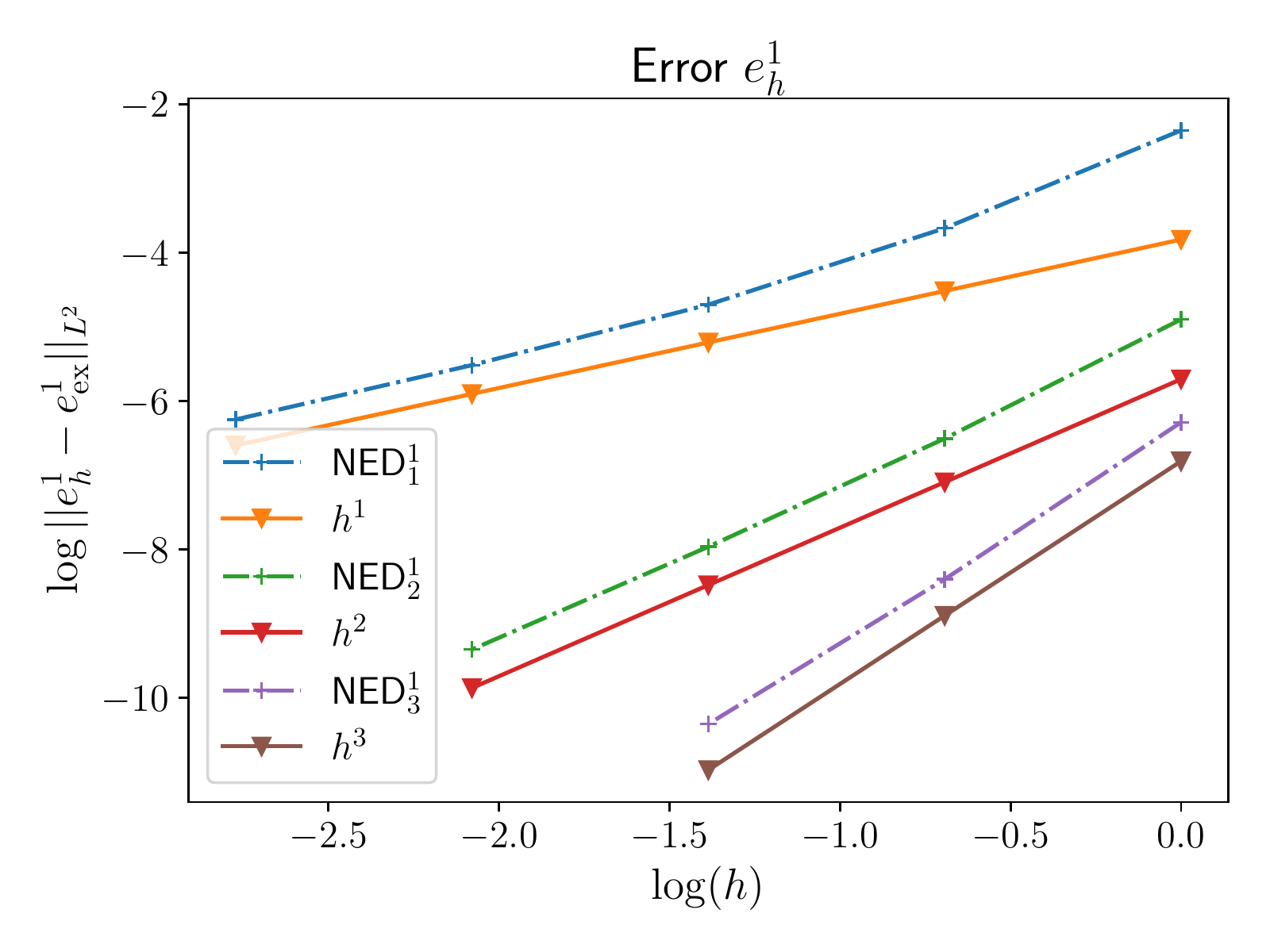}}%
\hspace{8pt}%
\subfloat[][$L^2$ error for $h^2_h$]{%
	\label{fig:err_H2}%
	\includegraphics[width=0.48\columnwidth]{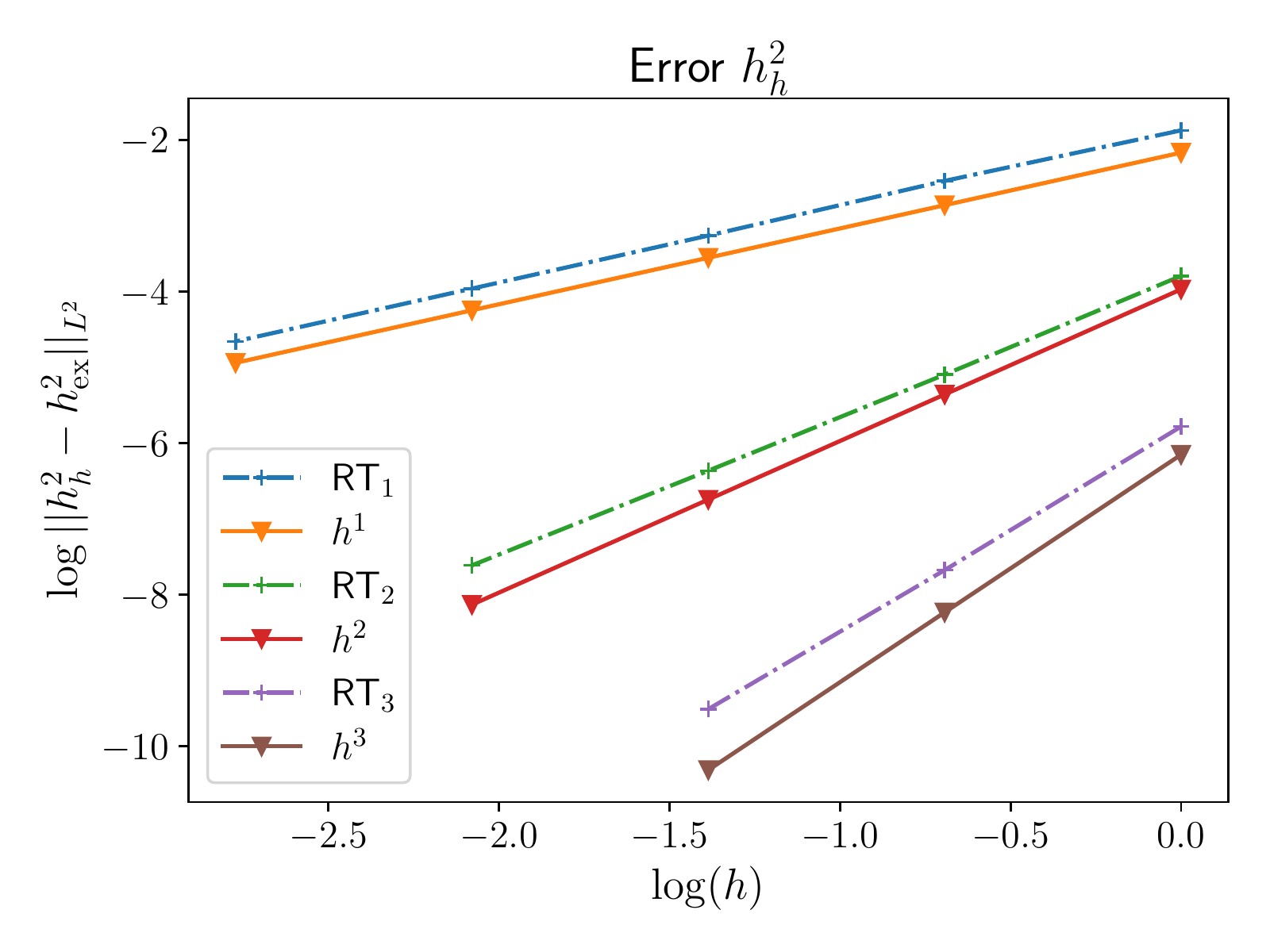}}
	\hspace{8pt}%
\subfloat[][$L^2$ error for $\dual{h}^1_h$]{%
	\label{fig:err_H1}%
	\includegraphics[width=0.48\columnwidth]{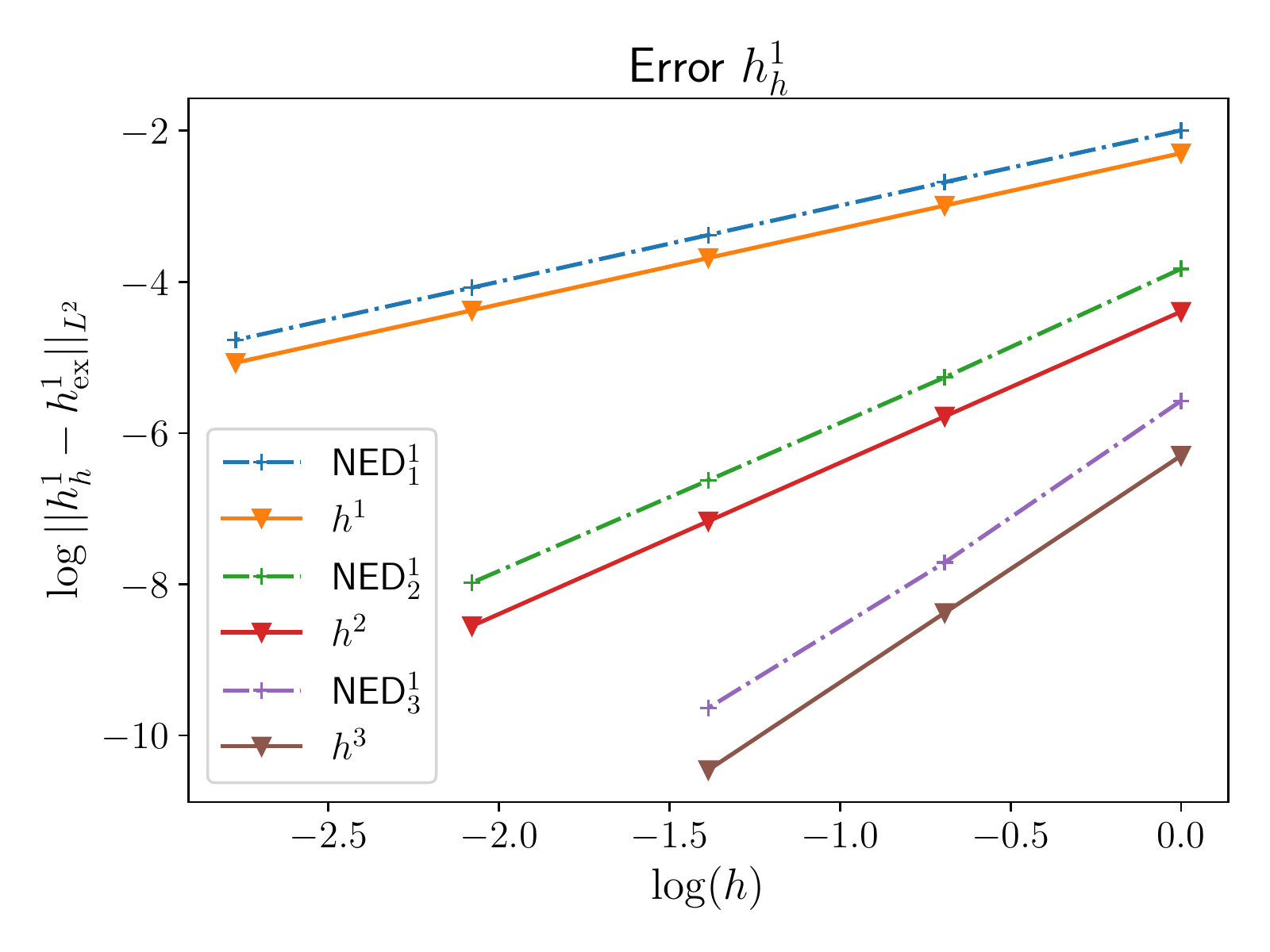}}
\caption{Convergence rate for the different variables in the Maxwell equations at $T_{\text{end}}=1$ for $\Delta t = \frac{1}{100}$.}%
\label{fig:conv_var_maxwell}%
\end{figure}

\begin{figure}[p]%
\centering
\subfloat[][$L^2$ norm of the difference $e^1_h - \dual{e}^2_h$]{%
	\label{fig:diff_E21}%
	\includegraphics[width=0.48\columnwidth]{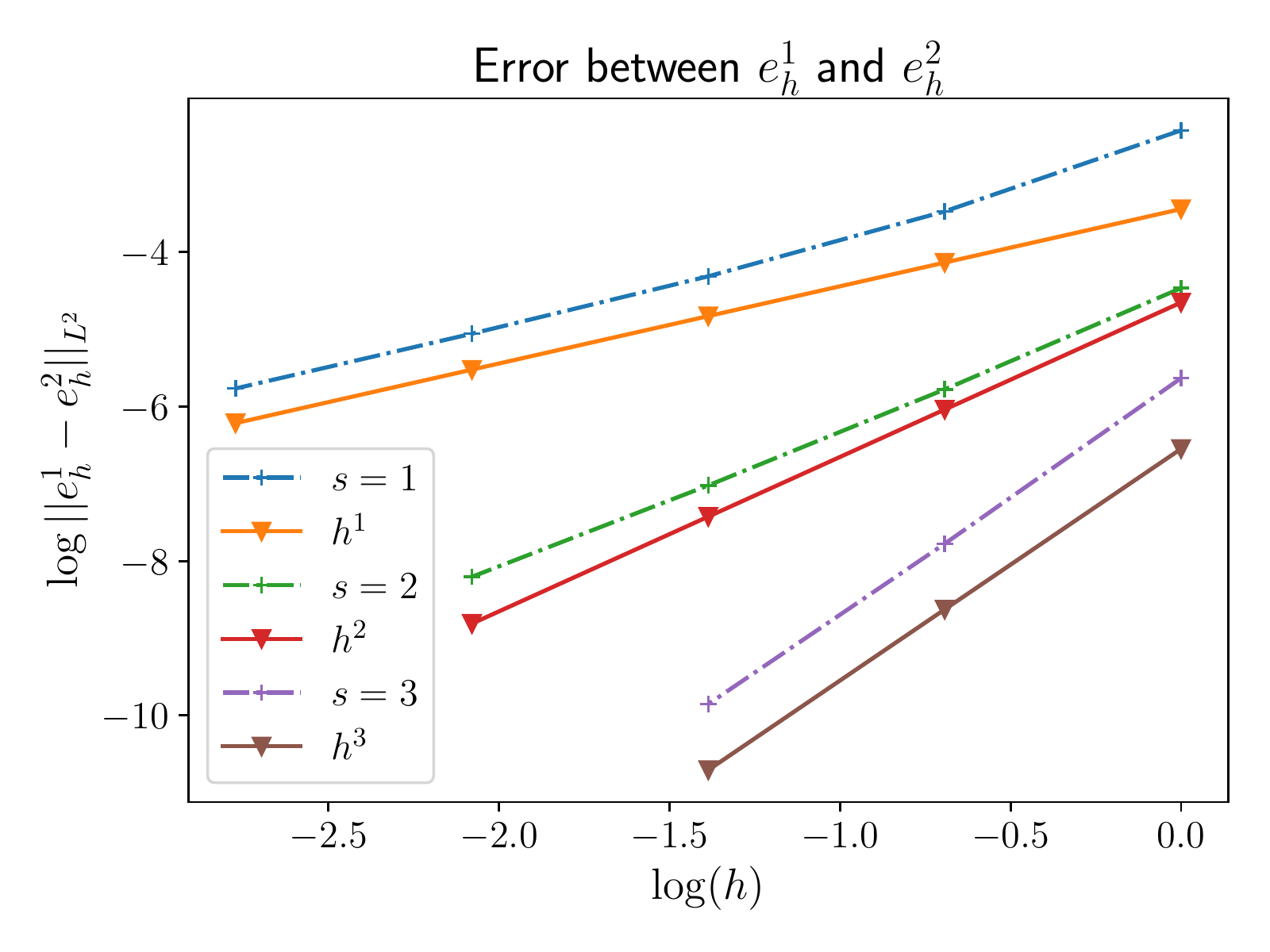}}%
\hspace{8pt}%
\subfloat[][$L^2$ norm of the difference $\dual{h}^1_h - h^2_h$]{%
	\label{fig:diff_H21}%
	\includegraphics[width=0.48\columnwidth]{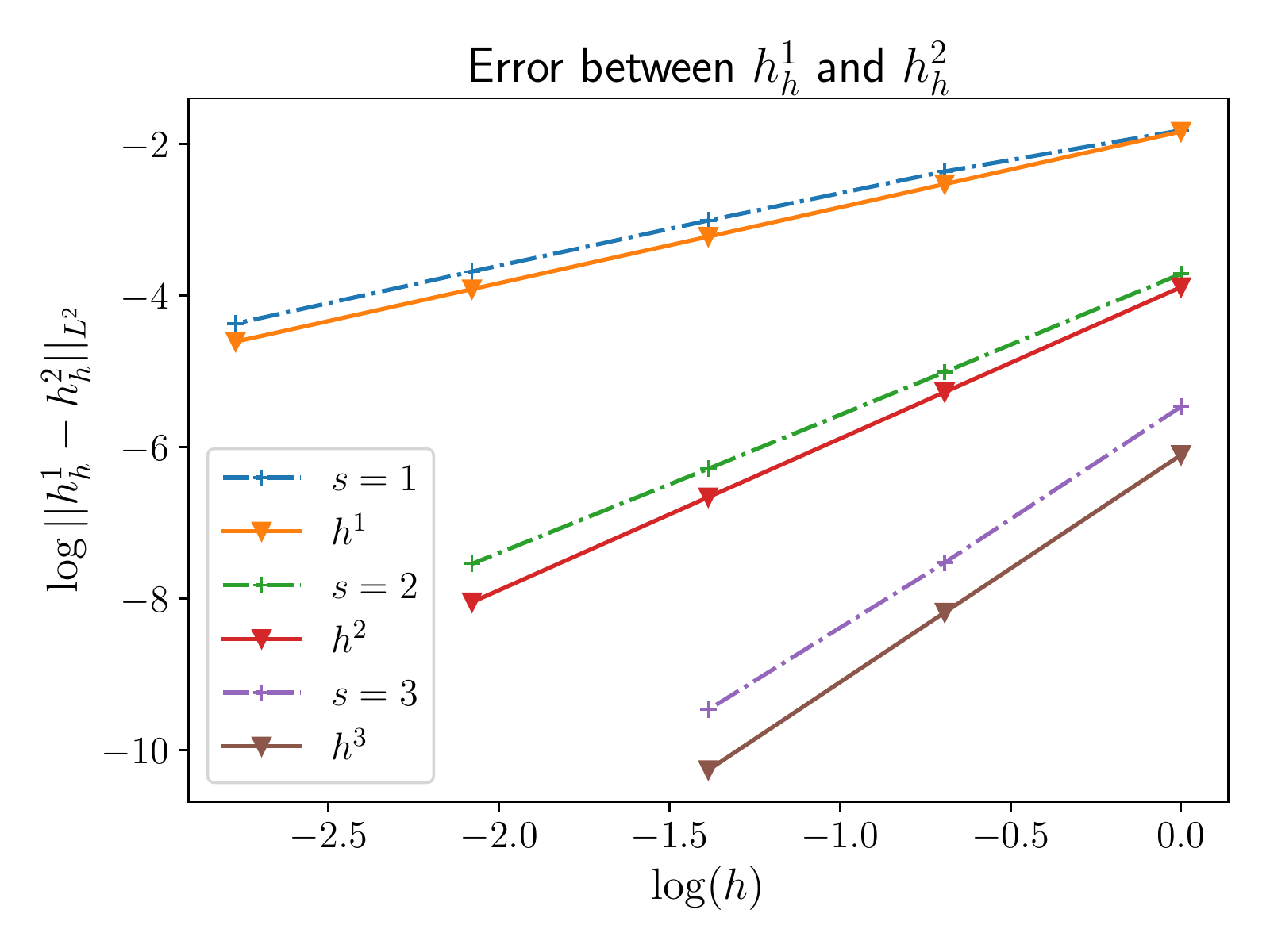}}%
\caption{$L^2$ difference of the dual representation of the solution for the Maxwell Eqs. at $T_{\text{end}}=1$ for $\Delta t = \frac{1}{100}$.}%
\label{fig:diff_dual_maxwell}%
\end{figure}

\section{Additional insights on the choice of the dual variables \label{sec:additionalinsights}}
Finally, we conclude by a short discussion on an alternative way of defining the diffeomorphism $\Phi$ in \eqref{eq:Phi_Diffeo} which relates the original energy variables to the dual ones and gives rise to the adjoint Stokes-Dirac structure.
Differently than presented in Sec. \ref{sec:asds}, we herewith take a definition of the Adjoint Dirac-Structure which will create an interesting symmetry between the primary system and the adjoint system. This will have as a consequence that the material operators $\dual{A}^p, \; B^q$ will then appear in the adjoint Dirac-Structure. This is not useful for discretization purposes, but is much more natural from a physical point of view, because the metrical property of space are strictly related not only to the Hodge, but to the coupling of the Hodge with the material properties.

We herewith then redefine the dual states $(\alpha^{n-p},\; \dual{\beta}^{n-q})$ to achieve a redundant representation of the state space in such a way that the dependencies on both the metric and material properties disappear. We will also redefine the adjoint Hamiltonian $\widetilde{H}(\alpha^{n-p},\dual{\beta}^{n-q})$ and we will do it here in such a way that the sum of the original and adjoint Hamiltonian is independent from the metric and material properties.
\begin{proposition}
Given any $k$-form $\omega^k$ and a symmetric positive definite isomorphism $A^k$ between $k$ forms, if we define  $\dual{\omega}^{n-k}:= \star A^k \omega^k$, and
$\dual{C}^{n-k}:=(-1)^{k(n-k)}\star A^{-1} \star$ (which is symmetric positive definite), we obtain the following relations:
\begin{equation}\label{eq:EcoE}
    \inpr[M]{A^k\omega^k}{\omega^k}=\inpr[M]{\dual{\omega}^{n-k}}{\dual{C}^{n-k}\dual{\omega}^{n-k}}, \qquad \forall \omega^k \in \Omega^k(M),
\end{equation}
and
\begin{equation}\label{eq:noMetric}
    \inpr[M]{A^k \omega^k}{\omega^k}+\inpr[M]{\dual{\omega}^{n-k}}{\dual{C}^{n-k} \dual{\omega}^{n-k}}
    =2\dualpr[M]{\omega^k}{\dual{\omega}^{n-k}}.
\end{equation}
\end{proposition}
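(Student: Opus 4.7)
The plan is essentially direct computation, leveraging two elementary facts about the Hodge star: (i) $\star\star = (-1)^{k(n-k)}\mathrm{Id}$ on $k$-forms for a Riemannian manifold, and (ii) $\star$ is an $L^2$-isometry, i.e.\ $\inpr[M]{\star\alpha^k}{\star\beta^k}=\inpr[M]{\alpha^k}{\beta^k}$ (which itself is a consequence of (i) together with the defining relation $\alpha\wedge\star\beta=\inpr{\alpha}{\beta}\mathrm{vol}$ and the symmetry $\alpha\wedge\star\beta=\beta\wedge\star\alpha$).

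First I would simplify $\dual{C}^{n-k}\dual{\omega}^{n-k}$. Substituting the definitions of $\dual{\omega}^{n-k}$ and $\dual{C}^{n-k}$,
\[
\dual{C}^{n-k}\dual{\omega}^{n-k}
= (-1)^{k(n-k)}\star (A^k)^{-1}\star\star A^k\omega^k
= (-1)^{k(n-k)}\star (A^k)^{-1}\bigl((-1)^{k(n-k)}A^k\omega^k\bigr)
= \star\omega^k,
\]
where the double Hodge on the $k$-form $A^k\omega^k$ contributes the sign that cancels the prefactor, and then $(A^k)^{-1}A^k=\mathrm{Id}$. As a by-product this makes transparent why $\dual{C}^{n-k}$ had to be defined with that particular sign and conjugation by $\star$.

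For \eqref{eq:EcoE}, I would insert the expression just obtained and use the isometry property:
\[
\inpr[M]{\dual{\omega}^{n-k}}{\dual{C}^{n-k}\dual{\omega}^{n-k}}
= \inpr[M]{\star A^k\omega^k}{\star\omega^k}
= \inpr[M]{A^k\omega^k}{\omega^k}.
\]
For \eqref{eq:noMetric}, I would then reduce the left-hand side to $2\inpr[M]{A^k\omega^k}{\omega^k}$ using \eqref{eq:EcoE}, and finally rewrite one copy of this inner product back in terms of the duality pairing:
\[
\inpr[M]{A^k\omega^k}{\omega^k}
= \int_M \omega^k\wedge\star A^k\omega^k
= \int_M \omega^k\wedge\dual{\omega}^{n-k}
= \dualpr[M]{\omega^k}{\dual{\omega}^{n-k}},
\]
where the first step uses symmetry of $A^k$ (so $\inpr{A^k\omega^k}{\omega^k}=\inpr{\omega^k}{A^k\omega^k}$) together with the defining relation of the $L^2$ inner product, and the second step is the definition of $\dual{\omega}^{n-k}$.

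There is no real obstacle here; the entire argument is sign-bookkeeping on the Hodge star and the interplay between the inner product (which involves a Hodge) and the duality product (which does not). The only point that requires care is tracking the $(-1)^{k(n-k)}$ factors so that they cancel cleanly, which is precisely why $\dual{C}^{n-k}$ carries that same sign in its definition. Symmetry and positive definiteness of $\dual{C}^{n-k}$ — asserted parenthetically in the statement — follow immediately from $\star^{-1}=(-1)^{k(n-k)}\star$ on the appropriate degree together with the corresponding properties of $A^k$, and can be mentioned in passing.
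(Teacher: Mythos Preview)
Your proposal is correct and takes essentially the same approach as the paper: both arguments hinge on the Hodge star being an $L^2$-isometry together with the identity $\star\star=(-1)^{k(n-k)}\mathrm{Id}$, and then identify $\inpr[M]{\star A^k\omega^k}{\star\omega^k}$ simultaneously with $\inpr[M]{A^k\omega^k}{\omega^k}$, with $\inpr[M]{\dual{\omega}^{n-k}}{\dual{C}^{n-k}\dual{\omega}^{n-k}}$, and with $\dualpr[M]{\omega^k}{\dual{\omega}^{n-k}}$. Your version is simply more explicit about the intermediate computation $\dual{C}^{n-k}\dual{\omega}^{n-k}=\star\omega^k$, which the paper leaves implicit.
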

\begin{proof}
Considering that $A^k$ is an isomorphism, we have that $\omega^k=(-1)^{k(n-k)}A^{-1} \star \dual{\omega}^{n-k}$. Since the Hodge star is an isometry, it is found
\begin{equation}
\inpr[M]{A^k \omega^k}{\omega^k} = \inpr[M]{\star A^k \omega^k}{\star \omega^k} = \inpr[M]{\dual{\omega}^{n-k}}{\dual{C}^{n-k} \dual{\omega}^{n-k}}
\end{equation}
Furthermore, one has
\begin{equation}
    \inpr[M]{\star A \omega^k}{\star \omega^k} = \dualpr[M]{\omega^k}{\dual{\omega}^{n-k}},
\end{equation}
leading to the second equation.
\end{proof}
We can therefore use the previous results by defining the following flow relations:
\[
{f}^{n-k}_1:=\star \dual{A}^p \dual{f}^p_1
\qquad 
\dual{f}^{n-q}_2:=\star B^q f^q_2, 
\]
\noindent and the pullback of the previous map defines the effort relations:
\[
{e}^{n-p}_1=(-1)^{p(n-p)}\star A^p \dual{e}^p_1,
\qquad
\dual{e}^{n-q}_2=(-1)^{q(n-q)}\star B^q {e}^q_2.
\]
The Stokes-Dirac structure \eqref{eq:StokesDirac} is rewritten in terms of the co-differential map defined in Eq. \eqref{eq:codif}:
\begin{equation}\label{eq:AdjStokesDirac2}
    \begin{pmatrix}
    {f}^{n-p}_1 \\
    \dual{f}^{n-q}_2
    \end{pmatrix} = 
    \begin{bmatrix}
        0 &  (-1)^{r + n(q+1)+1 +q(n-q)}
        {A}^{n-p} \d^* B^q \\
        (-1)^{n(p+1)+1+p(n-p)} \dual{B}^{n-q} \d^* A^p & 0 \\
    \end{bmatrix}
    \begin{pmatrix}
        \dual{e}^{p}_1\\
        {e}^{q}_2
    \end{pmatrix},
\end{equation}
where ${A}^{n-p} = (-1)^{p(n-p)} \star \dual{A}^p \star, \; \dual{B}^{n-q} = (-1)^{q(n-q)} \star A^q \star$. 
If $\dual{A}^p, \; \dual{A}^q$ are not regular enough, then they cannot be differentiated and it is necessary to invert them and bring them to the left side
\begin{equation}
\begin{bmatrix}
    {C}^{n-p} & 0 \\
    0 & \dual{E}^{n-q} \\
\end{bmatrix}
     \begin{pmatrix}
    {f}^{n-p}_1 \\
    \dual{f}^{n-q}_2
    \end{pmatrix} = 
    \begin{bmatrix}
        0 &  (-1)^{r + n(q+1)+1 +q(n-q)}\d^* B^q \\
        (-1)^{n(p+1)+1+p(n-p)} \d^* \dual{A}^p & 0 \\
    \end{bmatrix}
    \begin{pmatrix}
        \dual{e}^{p}_1\\
        {e}^{q}_2
    \end{pmatrix}.
\end{equation}
This system can be then put into weak form considering the integration by parts applied to the codifferential. Relation \eqref{eq:EcoE}  directly gives the representation which  can be used to define the Hamiltonian of the adjoint system, which then represents what in physical system theory is called the co-energy
\begin{equation}
\widetilde{H}({\alpha}^{n-p}, \dual{\beta}^{n-q})  = \int_M \frac{1}{2} {C}^{n-p} {\alpha}^{n-p} \wedge \star {\alpha}^{n-p} + \frac{1}{2}  \dual{B}^{n-q} \dual{\beta}^{n-q}\wedge \star \dual{\beta}^{n-q}.
\end{equation}
 Furthermore \eqref{eq:noMetric} immediately shows that by creating a double representation of the system with a different dual state,  the sum of the energy and co-energy, can be expressed as purely as function of the states and no extra metrical properties
\begin{equation}
   {H}(\dual{\alpha}^p, {\beta}^q)  +\widetilde{H}({\alpha}^{n-p}, \dual{\beta}^{n-q})
   = \int_M  \dual{\alpha}^p\wedge{\alpha}^{n-p}+{\beta}^q\wedge\dual{\beta}^{n-q},
\end{equation}
\noindent which shows that the sum of the energy and co-energy is independent of the metric properties expressed by $A^p$ and $A^q$ achieving a perfect symmetry. It can also be seen that in this alternative definition of the adjoint Dirac-Structure, the metric properties of space represented by the Hodge are always taken together with the physical properties of space represented by the $A$ operators as it would be expected from a physical point of view.
The insights presented above could be instructive in extending our proposed discretization scheme to nonlinear port-Hamiltonian system.

\section{Conclusion}

In this contribution, the dual field formulation is employed for the systematic discretization of linear port-Hamiltonian systems under generic boundary conditions. The proposed methodology is entirely based on the finite element exterior calculus framework. The dual field formulation solves the problems associated with the construction of a discrete Hodge operator (that typically requires dual topological meshes to preserve its isomorphic character) by relying on the adjoint system. The employment of the adjoint system introduces the boundary conditions explicitly by means of the integration by parts formula. This leads to two decoupled mixed discretizations that, once solved, allow retrieving a discrete power balance, regardless of the underlying boundary conditions. This guarantees that the proposed discretization method gives rise to a Dirac structure. This is of crucial importance for multiphysics applications, as well as the design of model-based control laws. \\

This methodology opens the door to a number of interesting developments. As argued in \cite{zhang2021mass}, the employment of dual representations for the unknowns provides useful indicators for adaptive meshing, as one can use the norm of the difference between dual variables as an a posteriori estimator. Furthermore, by using a time staggered discretization as in \cite{zhang2021mass} the boundary conditions could be imposed in a completely weak manner. This would allow the construction of an explicit state-space model, thus avoiding the complications associated with differential algebraic systems. An interesting aspect concerns the usage of algebraic dual polynomials (proposed in \cite{jain2021algdual}) so that dual solutions can be represented in a pair of algebraically dual polynomial spaces.\\

The dual field formulation has been successfully employed to tackle the rotational term of the Navier-Stokes equations in a linear manner, leading to a computationally efficient scheme that conserves mass, energy and helicity. For this reason, we expect a non linear extension of this method to be feasible and competitive with respect to state of the art solutions. An interesting development concerns the extension of the proposed methodology to elasticity problems. These problems require a non trivial extension of the canonical Stokes-Dirac structure, that is based on the de Rham complex, as the differential operators included in the underlying complex, the elasticity complex, are not topological but metrical.

\section*{Funding}

This work was supported by the PortWings project funded by the European Research Council [Grant Agreement No. 787675]

\bibliography{biblio_revision}

\appendix

\section{Proofs}\label{app:proofs}
\revTwo{
\paragraph{Proof the discrete integration by parts formula for the cases of interest}
In this section the formula
\begin{equation*}
    \dualpr[M]{\d\mu_h}{\lambda_h} + (-1)^k\dualpr[M]{\mu_h}{\d\lambda_h} = \dualpr[\partial M ]{\mu_h}{\lambda_h}, \qquad  \forall \mu_h \in \mathcal{V}_{s,h}^k, \; \forall\lambda_h \in \mathcal{V}_{s,h}^{n-k-1},
\end{equation*}
will be proven for the wave and Maxwell equations. Consider for example the wave equation in a 3 dimensional domain $M \subset \bbR^3$, equivalent to the case  $k=0$ in Formula \eqref{eq:int_byparts_d_H}. Using a vector calculus notation (but keeping the actual form degree as exponent), the formula rewrites as follows
\begin{equation*}
    \int_M \{  \grad \mu^0 \cdot \bm{\lambda}^2 + \mu^0 \div \bm{\lambda}^2 \} \; \d M = \int_{\partial M} \mu^0 (\bm{\lambda}^2 \cdot \bm{n}) \d{\Gamma}, \qquad \mu^0 \in H^1(M), \quad \bm{\lambda}^2 \in H^{\div}(M).
\end{equation*}
where $\d\Gamma$ denotes the measure at the boundary $\partial M$. For this example, the discrete counterpart based on the trimmed polynomial family is immediately verified for conforming elements $\mathcal{V}_{s, h}^k \subset H\Omega^k(M)$ since the first variable is in $H^1(M) = H\Omega^0(M)$. Using Continuous Galerkin elements $\mathrm{CG}_s(\mathcal{T}_h) \subset H^1(M)$  for $\mu^0_h$  and Raviart Thomas $\mathrm{RT}_s(\mathcal{T}_h) \in H^{\div}(M)$ for $\bm{\lambda}_h^2$ (where $s$ is the polynomial degree for the finite elements) leads to the following integration by parts when the contribution of each cell of the mesh $T \in \mathcal{T}_h$ is summed up
\begin{equation*}
    \sum_{T \in \mathcal{T}_h}\int_T \{\grad \mu^0_h \cdot \bm{\lambda}^2_h + \mu^0_h \div \bm{\lambda}^2_h\} \;  \d\bm{x} = \sum_{T \in \mathcal{T}_h} \int_{\partial T} \mu^0_h \; (\bm{\lambda}^2_h \cdot \bm{n}) \, \d\bm{s}, \qquad \mu^0_h \in \mathrm{CG}_s(\mathcal{T}_h), \quad \bm{\lambda}^2_h \in \mathrm{RT}_s(\mathcal{T}_h).
\end{equation*}
From the finite elements properties, $\mu^0_h$ is continuous across cells, as well as the normal component of $\bm{\lambda}^2_h$. Therefore, the inter-cell terms of the boundary integral vanish, leading to
\begin{equation}\label{eq:discr_intbyparts_wave}
    \int_M\{\grad \mu^0_h \cdot \bm{\lambda}^2_h + \mu^0_h \div \bm{\lambda}^2_h\} \;  \d{M} =\int_{\partial M} \mu^0_h \; (\bm{\lambda}^2_h \cdot \bm{n})\, \d\Gamma.
\end{equation}
The second case of interest for the paper is the one of the Maxwell equations in 3 dimensional domains $M \subset \bbR^3$, corresponding to the case $k=1$. The integration by parts \eqref{eq:int_byparts_d_H} for this case is written in vector calculus as
\begin{equation*}
    \int_M \{  \curl\bm{\mu}^1 \cdot \bm{\lambda}^1 - \bm{\mu}^1 \cdot\, \curl \bm{\lambda}^1 \} \; \d{M} = \int_{\partial M} \bm{\mu}^1 \cdot (\bm{\lambda}^1 \times \bm{n}) \, \d\Gamma, \qquad \bm{\mu}^1 \in H^1(M, \bbR^3), \; \bm{\lambda}^1 \in H^{\curl}(M),
\end{equation*}
where $H^1(M; \bbR^3) := [H^1(M)]^3$ is the $H^1$ space for vector fields. The same formula  can be rewritten using the tangential trace and the twisted tangential trace as follows \cite[Eq. 27]{buffa2002} 
\begin{equation*}
\int_M \{  \curl \bm{\mu}^1 \cdot \, \bm{\lambda}^1 - \bm{\mu}^1 \cdot \, \curl \bm{\lambda}^1 \} \; \d M = \int_{\partial M}  \{\bm{n} \times (\bm{\mu}^1 \times \bm{n})\} \cdot (\bm{\lambda}^1 \times \bm{n}) \, \d\Gamma.
\end{equation*}
The discrete counterpart based on the trimmed polynomial family then uses Nédélec elements of the first kind NED$_s^1(\mathcal{T}_h) \subset H^{\curl}(M)$ for both $\bm{\mu}^1_h$ and $\bm{\lambda}^1_h$
\begin{equation*}
\sum_{T \in \mathcal{T}_h} \int_T \{  \curl \bm{\mu}^1_h \cdot \bm{\lambda}^1_h - \bm{\mu}^1_h \cdot \curl \bm{\lambda}^1_h \} \; \d\bm{x} = \sum_{T \in \mathcal{T}_h} \int_{\partial T}  \{\bm{n} \times (\bm{\mu}^1_h \times \bm{n})\} \cdot (\bm{\lambda}_h^1 \times \bm{n})\, \d\bm{s},
\end{equation*}
where $\bm{\mu}^1_h, \; \bm{\lambda}^1_h \in \mathrm{NED}_s^1(\mathcal{T}_h)$. Nédélec elements are not $H^1(M,\bbR^3)$ conforming, i.e. $\mathrm{NED}_s^1\not\subset H^1(M, \bbR^3)$. However, their tangential component is continuous across cells. Therefore, the inter-cell terms of the last integral vanishes, leading to
\begin{equation}\label{eq:discr_intbyparts_maxwell}
\int_M \{  \curl \bm{\mu}^1_h \cdot \bm{\lambda}^1_h - \bm{\mu}^1_h \cdot \curl \bm{\lambda}^1_h \} \; \d M = \int_{\partial M}  \{\bm{n} \times (\bm{\mu}^1_h \times \bm{n})\} \cdot (\bm{\lambda}_h^1 \times \bm{n}) \d\Gamma.
\end{equation}
Formulas \eqref{eq:discr_intbyparts_wave} and \eqref{eq:discr_intbyparts_maxwell} demonstrates \eqref{eq:int_byparts_d_disc} for the cases of interest.

\paragraph{Proof of Proposition \ref{pr:parity_a0_a1}}
A necessary and sufficient condition for two coefficients to have the same parity is that their sum is even
\begin{equation*}
    a_0 + 1 \equiv 0 \mod{2}, \qquad  a_1 + 1 + r + p(n-p) + q(n-q) \equiv 0 \mod{2}.
\end{equation*}
Considering that $p+q=n+1$, it is obtained
\begin{equation*}
\begin{aligned}
    a_1 + 1 + r + p(n-p) + q(n-q) &\equiv np + n + 2+ pq+1 + p(n-p) + q(n-q)   \mod{2},\\
            &\equiv np + n + pq + 1+ np -p + nq - q, \mod{2} \\
            &\equiv n + pq +1  -p + nq - q, \mod{2} \\
            &\equiv q(p+n) \mod{2}, \\
            &\equiv q(2n -q +1) \mod{2}, \\
            &\equiv q(q - 1) \equiv 0 \mod{2}.
\end{aligned}
\end{equation*}
where it has been used 
$$p^2\equiv p, \; \mod 2, \qquad q^2\equiv q, \; \mod 2.$$
A similar computation then shows
\begin{equation*}
    a_0 + 1\equiv 0 \mod{2}.
\end{equation*}
}

\section{Vector calculus and differential forms}\label{app:vec_ext}
To illustrate how existing finite element libraries can be used to implement the dual fields discretization, it is important to highlight how exterior and vector calculus are related. Let's assume that $M$ is a three dimensional Riemannian manifolds $\mathrm{dim}(M)=3$with metric $g$ and associated tangent bundle $TM$ and cotangent bundle $T^*M =\Omega^1(M)$. Denoting a generic point in the manifold as
$\xi$, using the metric tensor and the Hodge operator, vector fields can be converted into one-forms or $n-1$-forms and vice-versa.

\begin{definition}[Flat operator]
The flat isomorphism $\flat$ 
\begin{equation}
    \flat: T_\xi M \rightarrow \Omega^1_\xi(M),
\end{equation}
defined by
\begin{equation}
v^\flat(w):= g_\xi(v, w), \qquad \forall w \in T_\xi M,
\end{equation}
converts vector fields into one-forms by using the metric structure of the manifold.
\end{definition}
The inverse operator is called the sharp operator.
\begin{definition}[Sharp operator]
The sharp isomorphism $\sharp$
\begin{equation}
    \sharp: \Omega^1_\xi(M) \rightarrow T_\xi M
\end{equation}
defined by
\begin{equation}
    g_\xi(\omega^\sharp, v) := \omega(v), \qquad \forall w \in T_\xi M,
\end{equation}
converts one forms into vector fields and it is the inverse of the flat operator
\end{definition}
By combining the flat and Hodge one can convert vector fields to $n-1$ forms
\begin{equation}
\begin{aligned}
    \beta: T M &\rightarrow \Omega^{n-1}(M), \\
             v &\rightarrow  \beta(v):= \star v^\flat.
\end{aligned}
\end{equation}
The inverse operator is given by
\begin{equation}
    \beta^{-1}= (-1)^{n+1} \sharp \star.  
\end{equation}

\end{document}